\newcommand{\dmo}{\DeclareMathOperator}
\dmo{\im}{im} %image
\dmo{\id}{id} %identity
\dmo{\Fun}{Fun} %functor
\dmo{\card}{card} %cardinality
\dmo{\res}{res} %restriction
\dmo{\Hom}{Hom} %homomorphism
\dmo{\SO}{SO} %special orthogonal group
\dmo{\ckr}{coker} %cokernel
\dmo{\Frob}{Frob} %Frobenius
\dmo{\GL}{GL} %general linear group
\dmo{\Gal}{Gal} %Galois group
\dmo{\ord}{ord} %order
\dmo{\Ab}{Ab} %abelian groups
\dmo{\Mor}{Mor} %morphisms
\dmo{\Ob}{Ob} %objects
\dmo{\End}{End} %endomorphisms
\dmo{\Nil}{Nil} %nilradical
\dmo{\Spec}{Spec} %spectrum
\dmo{\Tor}{Tor} %tor
\dmo{\ini}{in} %initial
\dmo{\disc}{Disc} %discriminant
\dmo{\Conf}{Conf}
\dmo{\UConf}{UConf}
\dmo{\Tup}{Tup}
\dmo{\Ver}{Ver}
\dmo{\Hor}{Hor}
\theoremstyle{plain}
\newtheorem{thm}{Theorem}[section]
\newtheorem{prop}[thm]{Proposition}
\newtheorem{lemma}[thm]{Lemma}
\newtheorem{cor}[thm]{Corollary}
\theoremstyle{definition}
\newtheorem{defin}[thm]{Definition}
\newtheorem{example}[thm]{Example}
\newtheorem{def/ex}[thm]{Definition/Example}
\newtheorem*{rmk}{Remark}
\newtheorem*{ack}{Acknowledgement}
\newcommand{\NN}{\mathbb{N}}
\newcommand{\RR}{\mathbb{R}}
\newcommand{\B}{\cal{B}}
\newcommand{\E}{\cal{E}}
\newcommand{\F}{\cal{F}}
\newcommand{\cal}[1]{\mathcal{#1}}
\newcommand{\sm}{\setminus}
\newcommand{\es}{\varnothing}
\newcommand{\al}{\alpha}
\newcommand{\be}{\beta}
\newcommand{\ga}{\gamma}
\newcommand{\ep}{\varepsilon}
\newcommand{\ze}{\zeta}
\newcommand{\si}{\sigma}
\newcommand{\Si}{\Sigma}
\newcommand{\de}{\delta}
\newcommand{\Th}{\Theta}
\newcommand{\sbs}{\subset}
\newcommand{\ti}{\times}
\newcommand{\ra}{\rightarrow}
\newcommand{\pa}{\partial}
\newcommand{\lt}{\leadsto}
\newcommand{\hra}{\hookrightarrow}
\newcommand{\til}{\tilde}
\title{Section Problems for Graph Configuration Spaces}
\author{Alexander Bauman}
\begin{document}

\maketitle

\begin{abstract}
We consider, for a finite graph $G$, when the surjective map $\Conf_{n+1}(G) \ra \Conf_n(G)$ of configuration spaces admits a section. We study when the answer depends only on the homotopy type of $G$, and give a complete answer. We also provide basic techniques for construction of sections.
\end{abstract}

\section{Introduction}

By a \textit{graph}, we mean a finite connected 1-dimensional cell complex. If $n$ is a natural number, then define the \textit{$n$-th ordered configuration space of $G$} to be the set
\[\Conf_n(G) = \{(x_1, \ldots, x_n)\in G^n : x_i \neq x_j \text{ for } i \neq j\}\]
equipped with the subspace topology in the product space $G^n$. Throughout, fix a graph $G$ and a natural number $n$.

Consider the ``forgetting map'' $\phi_{n+1}:\Conf_{n+1}(G) \ra \Conf_n(G)$ defined by:
\[(x_1, \ldots, x_{n+1}) \mapsto (x_1, \ldots, x_n).\]

Our main question is whether or not this surjection is split in the category of topological spaces, that is, whether or not there exists some map
\[s_n: \Conf_n(G) \ra \Conf_{n+1}(G)\]
such that
\[\phi_{n+1} \circ s_n = \id_{\Conf_n(G)}.\]
The map $s_n$ is called a \textit{section}. We will note that the data of a section is precisely the data of a map that continuously adds a point to each configuration of $\Conf_n(G)$.

We now state our main results, in terms of the Euler characteristic $\chi$ of graphs

\begin{thm}\label{mainthm}
For any graph $G$ with $\chi(G) < 0$ and natural number $n$ with $n \geq 2 - \chi(G)$, no section $s_n$ exists.
\end{thm}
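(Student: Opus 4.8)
The plan is to first restate the problem as a coincidence problem. A section $s_n$ is the same data as a continuous map $f\colon\Conf_n(G)\ra G$ with $f(x_1,\dots,x_n)\notin\{x_1,\dots,x_n\}$, via $s_n(\mathbf x)=(\mathbf x,f(\mathbf x))$; so I must show that no such ``added-point'' map exists. The fibre of $\phi_{n+1}$ over $\mathbf x$ is $G\sm\{x_1,\dots,x_n\}$, an open graph of Euler characteristic $\chi(G)-n$, which by the hypothesis $n\ge 2-\chi(G)$ is at most $2\chi(G)-2<0$. The guiding idea is that $f$ must continuously select a point of these fibres, and I would obstruct this by a winding-number (degree) argument: produce a loop of configurations that is nullhomotopic in $\Conf_n(G)$ yet whose image under $f$ is forced to be an essential loop of $G$.

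Concretely, I would build a loop $\ga\colon S^1\ra\Conf_n(G)$ together with control on $f\circ\ga$, as follows. Choose an embedded essential cycle $C\sbs G$ and move the $n$ points so that at every time $t$ the selected value $f(\ga(t))$ is trapped inside a single connected component of $G\sm\{x_1(t),\dots,x_n(t)\}$ which is a contractible arc, and so that this arc is dragged exactly once around $C$ as $t$ traverses $S^1$. Continuity of $f$ forbids the selected point from jumping between components, so it is carried around $C$ with the arc; hence $[f\circ\ga]$ is a nonzero multiple of $[C]$ in $H_1(G)\cong\ZZ^{1-\chi(G)}$, independently of which section we started with. On the other hand I would arrange $\ga$ itself to be nullhomotopic in $\Conf_n(G)$, using the remaining cycles of $G$ to contract the sweeping motion; then $f\circ\ga$ bounds a disk in $G$ and is nullhomotopic, a contradiction. (The analogous construction on a single circle, where $\chi=0$, correctly fails: there the sweeping loop is \emph{not} nullhomotopic, matching the fact that sections exist.)

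The numerical hypothesis enters precisely in reconciling these two requirements. Caging $f(\ga(t))$ into a contractible arc that genuinely winds around $C$ requires spending points to cut the other cycles of $G$ into which the arc might otherwise expand, while rendering the configuration loop $\ga$ nullhomotopic requires at least one further point free to execute, and then homotope away, the sweep. A count of the independent cycles of $G$, namely $\dim_\QQ H_1(G)=1-\chi(G)$, indicates that both can be done simultaneously exactly when $n\ge(1-\chi(G))+1=2-\chi(G)$; with fewer points one cannot both trap the selected value and contract the sweep, consistent with sections existing in that range.

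The main obstacle is that $\phi_{n+1}$ is not a fibration: as points cross the essential vertices of $G$ the homeomorphism type of the fibre $G\sm\{x_1,\dots,x_n\}$ changes, and components of the complement merge and split. Thus the trapping step cannot be imported from fibre-bundle theory and must be proved directly from the graph structure, tracking the component of the complement that contains $f(\ga(t))$ through these combinatorial transitions. Establishing that this component varies continuously enough to force the winding of $f\circ\ga$, while simultaneously keeping the supporting loop $\ga$ nullhomotopic, is the technical heart of the argument and the step in which the hypothesis $n\ge 2-\chi(G)$ is genuinely used.
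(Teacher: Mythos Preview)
Your proposal is a plan rather than a proof, and the plan has a genuine gap at exactly the point you flag as ``the technical heart.'' Two concrete problems:

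\textbf{(1) You never construct $\ga$, and there is no reason to believe it exists.} The loop you describe sweeps one or two tokens once around a cycle $C$ while the remaining tokens sit on the other edges to cut cycles. Such sweeping loops are typically \emph{nontrivial} in $\pi_1(\Conf_n(G))$; the sentence ``using the remaining cycles of $G$ to contract the sweeping motion'' is not a mechanism. If the cutting tokens are stationary, the sweep is essentially a generator of a braid-type subgroup and does not bound. If you move the cutting tokens to effect a homotopy, you destroy the very blocking that kept the trapped component contractible, so the trapping argument along $\ga$ collapses. Your count $(1-\chi(G))+1$ is not tied to any actual construction; nothing in the outline explains why one extra token is exactly what makes $\ga$ nullhomotopic.

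\textbf{(2) You have not pinned down which component $f(\ga(0))$ lies in.} For the winding conclusion you need $f(\ga(t))$ to lie in the \emph{specific} arc $A(t)$ for all $t$, but $f$ is arbitrary, so $f(\ga(0))$ could sit in any component of $G\sm\ga(0)$. The trapping argument only propagates a component forward; it does not select one at the start. The paper spends an entire section (connectable components, Type~I/II paths, the chasing lemma) developing machinery precisely to force $f$ into a known component before any such tracking can begin.

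It is also worth noting, as the paper does in the introduction, that $\phi_{n+1}$ \emph{always} splits on $\pi_1$: there is always a map $s$ with $\phi_{n+1}\circ s\simeq\id$. So the obstruction is not visible to a pure homotopy argument; it lives in the difference between a homotopy section and an honest pointwise-avoiding section. Your scheme does use the avoidance condition (via trapping), so it is not refuted by this, but it does indicate that the winding contradiction must come from a delicate geometric argument rather than a clean $H_1$ computation.

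The paper's proof is quite different in character. It never builds a single loop and computes a winding number. Instead it introduces \emph{distinguished pairs}: ordered pairs $(i,j)$ of indices for which some configuration witnesses $f(x)\in(x_i,x_j)$ on a nondisconnecting edge. The key lemma is that any two distinguished pairs must share an index. The bound $n\ge 2-\chi(G)$ is used to prove two combinatorial statements: (A) every index belongs to some distinguished pair, and (B) no single index belongs to all of them. For $n>3$ these are incompatible with the shared-index lemma, giving the contradiction. The case $n=3$, $\chi(G)=-1$ is handled separately by a careful parity argument on $\Si_3$ (functions $\Hor,\Ver\colon\Si_3\to\{\pm1\}$) after reducing to $G\in\{\infty,\Th,D\}$. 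The bound $2-\chi(G)$ enters because one needs enough tokens to occupy the $1-\chi(G)$ edges outside a maximal tree while still having tokens free to chase $f$ into a distinguished-pair witness; this is where the counting is actually carried out.
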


\begin{thm}\label{chi0thm}
For any graph $G$ with $\chi(G) = 0$, a section $s_n$ exists for all natural numbers $n$.
\end{thm}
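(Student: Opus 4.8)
The plan is to exploit that $\chi(G)=0$ forces $G$ to be homotopy equivalent to a circle: since $\chi(G)=1-b_1(G)$ for a connected graph, we have $b_1(G)=1$, so $G$ contains a unique embedded cycle $C$ and every other edge lies on a tree attached to $C$. Equivalently, $G$ is obtained from $C$ by attaching finitely many pendant edges (growing the trees one leaf-edge at a time), and attaching a pendant edge changes neither connectivity nor $\chi$. I would therefore argue by induction on the number of edges not lying on $C$: the base case is $G=C\cong S^1$, and the inductive step adds a single pendant edge. Throughout I use the reformulation noted in the introduction, that a section is exactly a rule assigning to each configuration one further point of $G$, continuously.

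For the base case $G=S^1=\RR/\ZZ$, orient $C$ and use the labelling of the configuration: given $(x_1,\dots,x_n)$, let $x_{i^*}$ be the first configuration point met when travelling clockwise from $x_1$, and place the new point at the midpoint of the open arc from $x_1$ to $x_{i^*}$. This point lies strictly between $x_1$ and its clockwise neighbour, hence differs from every $x_i$. Continuity is the crux and it holds because no two points of a configuration ever collide: the clockwise cyclic order of $(x_1,\dots,x_n)$ is thus a locally constant discrete invariant on $\Conf_n(S^1)$, so on each connected component the index $i^*$ is constant and the midpoint depends continuously on $x_1$ and $x_{i^*}$. This works for every $n\ge 1$, matching the statement of Theorem \ref{chi0thm}.

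For the inductive step write $G=G_0\cup e$, where $e$ is a pendant edge from a new leaf $\ell$ to a vertex $v\in G_0$, and where $G_0$ again has $\chi=0$ and so carries sections $s^{0}_m$ for all $m$ by induction. Parametrise $e$ as $[0,1]$ with $\ell=0$ and $v=1$. The guiding idea is that the leaf end of $e$ is a dead end in which a point can be hidden. Concretely, let $h_{(1)}\le h_{(2)}$ be the two smallest heights among the configuration points lying on $e$, padded by the value $1$ (a virtual point at $v$) whenever fewer than two points lie on $e$, and let the new point be at height $\tfrac12(h_{(1)}+h_{(2)})$ on $e$. Whenever $e$ carries at least one configuration point this value lies strictly in the interior $(0,1)$ and in a gap between consecutive configuration points on $e$; since the interior of $e$ is disjoint from $G_0$, the resulting point automatically avoids the whole configuration. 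When $e$ carries no configuration point the whole configuration lies in $G_0$ and we instead feed it to $s^{0}_n$.

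The main obstacle is to glue these two regimes continuously across the configurations for which a point sits at $v$, i.e.\ as the pendant edge empties or fills. On the pendant side the rule above drives the new point to $v$ as the last point on $e$ migrates across $v$, whereas $s^{0}_n$ deposits it elsewhere in $G_0$, so naive concatenation is discontinuous. I expect this handoff to absorb essentially all of the work: one must route the new point through $v$, sliding it from near $v$ on $e$, across $v$, and into $G_0$ to meet the value of $s^{0}_n$, doing so continuously and collision-free for every configuration simultaneously. Here the cycle is indispensable, since for a single interval the analogous problem already fails for small $n$ by an intermediate value argument (a fixed-point-free self-map of $[0,1]$ cannot exist), so it is precisely the extra room provided by $C$ that lets the point escape the dead end. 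A convenient way to organise the routing is to maintain, as an inductive invariant, that each $s^{0}_m$ always deposits its point on the cycle $C$; then near $v$ the new point need only be slid along $G_0$ toward $C$ in the complement of the configuration, and the continuity of this sliding is the technical heart of the argument.
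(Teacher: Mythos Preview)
Your base case for $G=S^1$ is fine, and you have correctly identified the structural fact that drives the whole argument: $G$ is a circle $C$ with trees hanging off it. But the inductive step has a real gap that you name without closing. The handoff between the pendant-edge regime and the $s^0_n$ regime is discontinuous at configurations where the last token on $e$ crosses $v$, and the fix you sketch --- maintaining as an inductive invariant that each $s^0_m$ lands on $C$ --- is incompatible with your own construction, since your pendant-edge rule deposits the new point on $e$, not on $C$. If you enforce the invariant, the pendant rule must be discarded and you are back to square one; if you drop the invariant, the induction does not propagate. Either way the ``technical heart'' is absent. There is also a collision hazard you have not addressed: configurations can have a token sitting arbitrarily close to $v$ on the $G_0$ side precisely when your routing would be pushing the new point through $v$ into $G_0$.

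The paper sidesteps all of this by giving a direct, global construction rather than an induction. For $n\ge 2$ it orients every edge of $G$ so that there is a unique forward direction at each point: the edges of the cycle $C$ are oriented cyclically, and every tree edge is oriented toward $C$. One then sets
\[
\de(x)=\tfrac12\min\{d(x_1,x_k):k\neq 1\}
\]
and takes $f(x)$ to be the point at distance $\de(x)$ from $x_1$ in the forward direction. Because the forward direction is defined consistently across all of $G$, no gluing or regime-switching is needed; continuity is immediate, and $f(x)\neq x_k$ for every $k$ since $\de(x)\le d(x_1,x_k)/2$. The case $n=1$ is handled separately by retracting onto $C$ and taking the antipode. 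In hindsight, this is the natural endpoint of your own idea: rather than peel off one pendant edge at a time and struggle to reattach, orient all the trees toward $C$ simultaneously.
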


\begin{thm}\label{treesthm}
For any graph $G$ with $\chi(G) = 1$, a section $s_n$ exists if and only if $n \geq 2$.
\end{thm}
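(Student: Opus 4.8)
The plan is to prove the two directions of the biconditional separately, using the standard fact that a finite connected graph $G$ has $\chi(G) = 1$ precisely when it is a tree (contains no cycle). Throughout I would fix the path metric $d$ on $G$ in which every edge has length $1$; for a finite graph this metric induces the CW topology, and when $G$ is a tree the resulting space is uniquely geodesic, i.e.\ any two points $x, y$ are joined by a unique arc $[x,y]$ whose length is $d(x,y)$. The geometric input I would isolate first is that the \emph{geodesic parametrization} map, sending a triple $(x, y, t)$ with $0 \le t \le d(x,y)$ to the point at distance $t$ from $x$ along $[x,y]$, is continuous. This is exactly where the tree (equivalently $\mathrm{CAT}(0)$) hypothesis enters — on a graph with a cycle geodesics can fail to be unique and jump — and I expect verifying it carefully to be the main technical obstacle.

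For the existence direction ($n \ge 2$) I would add a point \emph{next to $x_1$, heading toward $x_2$}. Given a configuration $(x_1, \ldots, x_n)$, set $d_1 = \min_{k \ge 2} d(x_1, x_k)$, which is continuous and strictly positive on $\Conf_n(G)$, and let $p$ be the point at distance $d_1/2$ from $x_1$ along the arc $[x_1, x_2]$; since $d_1 \le d(x_1, x_2)$, this point is well defined. Then $d(x_1, p) = d_1/2 < d_1 \le d(x_1, x_k)$ for every $k \ge 2$, so $p \notin \{x_1, \ldots, x_n\}$, and the assignment $s_n(x_1, \ldots, x_n) = (x_1, \ldots, x_n, p)$ lands in $\Conf_{n+1}(G)$. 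Continuity of $s_n$ then reduces to continuity of $d_1$ and of the geodesic parametrization map above. Note that this construction uses $n \ge 2$ in an essential way: it needs a second point $x_2$ both to define a direction of travel and to guarantee the arc $[x_1,x_2]$ has positive length.

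For the converse I would show that no section exists when $n = 1$. A section $s_1 \colon \Conf_1(G) = G \to \Conf_2(G)$ is exactly a map of the form $x \mapsto (x, f(x))$ with $f \colon G \to G$ continuous and $f(x) \ne x$ for all $x$; that is, a section is the same data as a fixed-point-free self-map of $G$. So it suffices to prove that a finite tree has the fixed-point property. I would deduce this from the Lefschetz fixed-point theorem: $G$ is a compact polyhedron and, being contractible, has the rational homology of a point, so every self-map $f$ has Lefschetz number $L(f) = 1 \ne 0$ and hence a fixed point. (An elementary induction on the number of edges also works and avoids citing Lefschetz.) Together with the construction above, this establishes that a section exists if and only if $n \ge 2$; the only mild caveat is to dispose of the degenerate case where $G$ is a single vertex, for which the relevant configuration spaces are empty.
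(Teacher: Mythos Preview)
Your proof is correct. The existence direction is essentially identical to the paper's: the paper also defines $\de(x) = \min(\{d(x_1, x_k): k \neq 1\} \cup \{1\})/2$ and takes the point on the unique arc from $x_1$ to $x_2$ at distance $\de(x)$ from $x_1$. (The extra cap at $1$ is cosmetic.)

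The non-existence direction is a genuinely different route. The paper proves the fixed-point property for trees via its own ``chasing'' machinery: it develops a notion of locally consistent systems of components along paths in $\Conf_n(G)$, proves a Chasing Lemma, and deduces as a corollary that if $T$ is a subtree attached at a single point $p$ and the only token at $p$ is $x_1$, then $f(x)$ cannot land in $T$. Applying this to each component of $G \sm \{p\}$ contradicts the existence of an identifying function when $n = 1$. Your Lefschetz argument is cleaner and more direct for this particular statement, and the paper itself remarks that its Proposition is exactly the assertion that every self-map of a finite tree has a fixed point. What the paper's longer route buys is that the chasing machinery is not ad hoc: the same lemmas are the engine for the much harder Theorem on $\chi(G) < 0$, so in context the $n=1$ case is a warm-up application rather than an isolated result.

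One small quibble: in the degenerate case where $G$ is a single vertex, $\Conf_1(G)$ is a point, not empty, while $\Conf_2(G)$ is empty; so there is still no section $s_1$, and for $n \ge 2$ the section exists vacuously. This is consistent with the theorem, but your phrasing ``the relevant configuration spaces are empty'' is not quite accurate for $n=1$.
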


We also show that when $\chi(G) < 0$ and $n < 2 - \chi(G)$, then a section may or may not exist by providing examples of both cases, which shows that it isn't possible to improve our results without appealing to more than the value of $\chi(G)$.

The related problem of finding a section to the forgetting map $\Conf_{n+1}(M) \ra \Conf_n(M)$ where $M$ is a manifold with dimension greater than 1 has been studied, for instance in \cite{Fadell_Neuwirth_1962}, \cite{GONCALVES200333}, \cite{MR4052223}, and \cite{chen2019section}. This problem is somewhat more amenable to sophisticated arguments from algebraic topology since the forgetting map is a fibration, and the homology of $\Conf_n(M)$ is significantly better understood than the homology of $\Conf_n(G)$. The forgetting map $\phi_{n+1}$ of graph configuration spaces is not a fibration, as the topology of $G \sm \{*\}$ depends on our choice of point $*$. As a result, our methods are primarily geometric, and make little use of algebraic topology.

Another related problem is to determine whether the map $\phi_{n+1}$ induces a split surjection on fundamental groups. Since $\Conf_n(G)$ is aspherical, as proven in Theorem 3.1 of \cite{MR1873106}, this is equivalent to finding a map $s_n: \Conf_n(G) \ra \Conf_{n+1}(G)$ such that $\phi_{n+1} \circ s_n \simeq \id_{\Conf_n(G)}$. Proposition 5.6 of \cite{complexity} shows that such maps always exist.

The paper proceeds as follows: In Section 2, we introduce notation and terminology. In Section 3, we present elementary examples of sections. In Section 4, we introduce and develop the idea of \textit{connectable components}, a technical tool useful for the proof of Theorem \ref{mainthm}. Section 5 is devoted to the proof of Theorem \ref{mainthm}. In Section 6, we introduce a combinatorial model for $\Conf_n(G)$ which is useful for proving more advanced existence results. In Section 7, we show that nothing can be concluded about the case where $\chi(G) < 0$ and $n < 2 - \chi(G)$ without appealing to more than the homotopy type of the graph.

\begin{ack}
The author would like to thank Bena Tshishiku for introducing him to graph configuration spaces, and for meeting with him regularly to discuss ideas and review drafts of this paper.
\end{ack}

\section{Notation}

We assume that each graph is equipped with its natural cellular structure such that when $G \not\cong S^1$, every vertex has degree $\neq 2$, and when $G \cong S^1$, then $G$ only has one vertex. A \textit{branched vertex} is a vertex with degree $\geq 3$, and a \textit{free vertex} is a vertex with degree $1$. An edge $e$ is a \textit{loop} if it contains a single vertex. If we fix a homeomorphism between each edge and $I = [0, 1]$ (or $S^1$ in the case that the edge is a loop) we obtain a metric $d$ on $G$ by taking these homeomorphisms to be local isometries and using the shortest path metric. 

A subset of a graph is called a \textit{subgraph} if it is closed and connected (this is not the standard definition, but it will prove more useful and flexible for us). A graph is a \textit{tree} if it is simply connected. A \textit{maximal subtree} of a graph is a subgraph which is a tree, and which contains every vertex. It is a basic fact of graph theory that every graph contains a maximal subtree. 

We call the elements of $\Conf_n(G)$ \textit{configurations}, and the entries in a configuration \textit{tokens}. The integers $1, \ldots, n$ are called \textit{indices}. If $p, q$ are distinct points in $G$ which lie on a common edge, then $(p, q)$ (resp. $[p, q]$) is the open (resp. closed) interval they bound. In the case, for example, where $G \cong S^1$ or $p, q$ are vertices which are endpoints of multiple edges, there is ambiguity in this definition, which we ignore, but the particular choice of edge will be clear from context in all cases we consider. If $x = (x_1, \ldots, x_n)$ is a configuration, then we abuse notation, and denote by $G \sm x$ the complement $G \sm \{x_1, \ldots, x_n\}$.

\section{Basic Examples of Sections}\label{basicexamples}

Here we present three elementary cases where sections always exist. First we discuss identifying functions:

Note that the data of a section is exactly the data of a function that continuously adds a token to each configuration of $\Conf_n(G)$. In other words, this is equivalent to asking if there exists a map $f: \Conf_n(G) \ra G$ such that for any $x = (x_1, \ldots, x_n) \in \Conf_n(G)$ and any index $j$, we have that $f(x) \neq x_j$. We call any function of this form an \textit{identifying function}. Note that identifying functions are in one-to-one correspondence with sections. For notational simplicity we will prefer to work with them henceforth.

\begin{prop}\label{n1sect}
Suppose $n = 1$. If $\chi(G) \leq 0$ then $G$ admits an identifying function.
\end{prop}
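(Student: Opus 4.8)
The plan is to reinterpret the statement and then reduce to the elementary fact that a circle admits a fixed-point-free self-map. Since $n = 1$ there is no collision constraint, so $\Conf_1(G) = G$, and an identifying function is exactly a continuous map $f : G \ra G$ with $f(x) \neq x$ for every $x \in G$; that is, a fixed-point-free self-map of $G$. The goal is therefore to construct such an $f$ under the hypothesis $\chi(G) \le 0$.

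The key point is that the first Betti number satisfies $b_1(G) = 1 - \chi(G) \ge 1$, so $G$ contains an embedded circle, and in fact retracts onto one. Concretely, I would fix a maximal subtree $T \sbs G$; since $\chi(G) \le 0$, at least one edge $e$ is not contained in $T$. If $e$ is a loop at a vertex $v$, set $C = e$; otherwise, letting $a, b$ be the endpoints of $e$ and $P \sbs T$ the unique embedded path in $T$ from $a$ to $b$, set $C = e \cup P$. In either case $C \cong S^1$. I would then build a retraction $r : G \ra C$ as follows: the tree $T$ deformation retracts onto its subtree $P$, giving a retraction $T \ra P \sbs C$; I declare $r$ to be the identity on $e$; and I send each remaining edge of $G \sm T$ to an arbitrary path in $C$ joining the images under $r$ of its (vertex) endpoints. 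These prescriptions agree on vertices, so the gluing lemma yields a continuous $r$ with $r|_C = \id_C$.

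With the retraction in hand, let $\rho : C \ra C$ be a fixed-point-free rotation (for instance rotation by $1/2$ under an identification $C \cong \RR / \ZZ$), and let $\iota : C \hra G$ be the inclusion. I would define $f = \iota \circ \rho \circ r$. If $f(x) = x$ for some $x$, then $x = f(x) \in C$, whence $r(x) = x$ and so $f(x) = \rho(x)$; the equation $\rho(x) = x$ then contradicts that $\rho$ has no fixed point. Hence $f$ is the desired identifying function. I expect the main obstacle to be the careful construction of $r$ — in particular verifying continuity across all the edges outside $T$ and handling the loop case — since once $G$ retracts onto an embedded circle, factoring a rotation through $r$ destroys every fixed point by essentially the same bookkeeping that forces the relevant Lefschetz number to vanish. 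The case $G \cong S^1$ is subsumed, with $f = \rho$ working directly.
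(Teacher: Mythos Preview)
Your proof is correct and follows essentially the same approach as the paper: retract $G$ onto an embedded circle and compose with the antipodal map, then observe that any fixed point would have to lie on the circle and hence be fixed by the rotation. The paper's version is terser, simply asserting the existence of the embedded $S^1$ and the retraction $r$, while you spell out the construction of $C$ via a maximal subtree and build $r$ edge by edge; the only cosmetic gap is that in the loop case you should take $P = \{v\}$ so that the sentence ``$T$ deformation retracts onto its subtree $P$'' still parses.
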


\begin{proof}
If $\chi(G) \leq 0$, then there exists an embedded circle $S^1 \sbs G$ and a retract $r: G \ra S^1$. Define $f: G \ra G$ by $f(x) = -r(x)$, the point on $S^1$ antipodal to $r(x)$. Then $f$ is an identifying function.
\end{proof}

\begin{prop}\label{treesprop1}
If $n \geq 2$ and $\chi(G) = 1$, then $G$ admits an identifying function.
\end{prop}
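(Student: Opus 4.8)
The plan is to exploit the fact that $\chi(G)=1$ forces $G$ to be a tree, so that $G$ is \emph{uniquely geodesic}: any two points $a,b\in G$ are joined by a unique embedded arc, which I denote $[a,b]$, and I write $\ga_{a,b}(t)$ for the point of this arc at distance $t$ from $a$, defined for $t\in[0,d(a,b)]$. The one geometric input I will need is that the assignment $(a,b,t)\mapsto \ga_{a,b}(t)$ is continuous: as $b$ varies, the arc $[a,b]$ only gains or loses segments whose length tends to $0$ as $b$ crosses a vertex, so no jump occurs. (If $G$ is a single vertex then $\Conf_n(G)=\es$ for $n\ge 2$ and there is nothing to prove, so I may assume $G$ has an edge.)

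Given this, here is the construction. Since $n\ge 2$, every configuration $x=(x_1,\ldots,x_n)$ contains the two labeled tokens $x_1,x_2$. Set $\de(x)=\min_{2\le j\le n} d(x_1,x_j)$; this is a minimum of finitely many continuous functions, and it is strictly positive because the tokens are distinct. Since $x_2$ is among these tokens, $\de(x)\le d(x_1,x_2)$. I then define the candidate identifying function by
\[ f(x)=\ga_{x_1,x_2}\!\big(\tfrac12\de(x)\big), \]
the point on the arc from $x_1$ to $x_2$ at distance $\tfrac12\de(x)$ from $x_1$; this is well defined since $\tfrac12\de(x)\le \tfrac12 d(x_1,x_2)< d(x_1,x_2)$.

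First I would check that $f$ avoids every token, which is where the choice of radius pays off. The point $f(x)$ lies at distance exactly $\tfrac12\de(x)$ from $x_1$, whereas every other token $x_j$ with $j\ge 2$ lies at distance $d(x_1,x_j)\ge \de(x)>\tfrac12\de(x)$ from $x_1$, and $x_1$ itself lies at distance $0<\tfrac12\de(x)$. Hence $f(x)\ne x_j$ for all $j$, regardless of whether $x_j$ happens to lie on the arc $[x_1,x_2]$. Then I would check continuity, which follows by composing the continuous maps $x\mapsto\big(x_1,x_2,\tfrac12\de(x)\big)$ and $(a,b,t)\mapsto\ga_{a,b}(t)$. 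Together these show $f$ is an identifying function, which by the correspondence recorded before Proposition \ref{n1sect} yields the desired section.

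The only real obstacle is continuity, and the entire point of the construction is to finesse it. A more naive recipe—placing the new token just short of the token nearest $x_1$ along the arc, or in the first gap along it—breaks down precisely when a token slides off the arc $[x_1,x_2]$ through a branch vertex, which makes the ``nearest token on the arc'' jump discontinuously. Measuring a radius $\tfrac12\de(x)$ from $x_1$ along the fixed arc toward $x_2$, rather than selecting any token- or gap-dependent position, eliminates all such combinatorial jumps, leaving only routine verifications. The construction also makes transparent why the hypothesis $n\ge 2$ is essential: with a single token there is no second point $x_2$ to orient the arc, consistent with the fact that the contractible space $G$ admits no fixed-point-free self-map.
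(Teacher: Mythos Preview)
Your argument is correct and is essentially the paper's own proof: both use the unique geodesic $[x_1,x_2]$ in the tree and place the new token at distance half the minimum distance from $x_1$ to the other tokens. The only cosmetic difference is that the paper caps this radius at $1/2$ (including $1$ in the minimum), which is harmless; your version provides the continuity and avoidance checks that the paper leaves implicit.
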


\begin{proof}
Note that $\chi(G) = 1$ implies that $G$ is a tree. Therefore there exists a unique embedded line segment between $x_1, x_2$ for each configuration $x = (x_1, \ldots, x_n)$. Let 
\[\de(x) = \min(\{d(x_1, x_k): k \neq 1\}\cup \{1\})/2.\]
Then let $f(x)$ be the point on this embedded line segment with distance $\de(x)$ away from $x_1$. Then $f$ is an identifying function.
\end{proof}

Proposition \ref{treesprop1} is one half of Theorem \ref{treesthm}. We will prove the other direction in Proposition \ref{trees2}. We conclude this section by proving Theorem \ref{chi0thm}.

\begin{proof}[Proof of Theorem \ref{chi0thm}]
The case $n = 1$ follows from Proposition \ref{n1sect}, so assume $n \geq 2$. Our approach will be to orient each edge, and then add a point near $x_1$ with respect to this orientation. There is a unique embedded circle $S^1 \sbs G$. Orient $S^1$, and orient each edge in the complement of $S^1$ so they are ``pointing towards'' $S^1$ (Note the condition $\chi(G) = 0$ implies that $G$ is homeomorphic to a circle with trees attached, where each tree is attached at a single point). This gives us a unique ``forwards direction'' at each point on $G$. Let 
\[\de(x) = \min\{d(x_1, x_k): k \neq 1\}/2,\]
and let $f(x)$ be the point a distance $\de(x)$ away from $x_1$ in the ``forward direction.'' Then $f$ is an identifying function.
\end{proof}

\section{Connectable Components and Chasing}

In this section we develop tools which can be used to prove the nonexistence of sections. Consider the configurations $x, x'$ on the graph $G$, as depicted in Figure \ref{fig:changing_configs}.

$\,$

\begin{figure}[h]
    \labellist
    \small \hair 2pt
    \pinlabel $x_3$ at -15 55
    \pinlabel $f(x)$ at 50 125
    \pinlabel $x_1$ at 90 55
    \pinlabel $x_2$ at 210 60
    \pinlabel $x'_3$ at 270 55
    \pinlabel $x'_1$ at 390 125
    \pinlabel $x'_2$ at 500 60
    \pinlabel $Y_1$ at 330 115
    \pinlabel $Y_2$ at 460 115
    \pinlabel $Y_3$ at 380 20
    \endlabellist
    \centering
    \includegraphics[scale = .6]{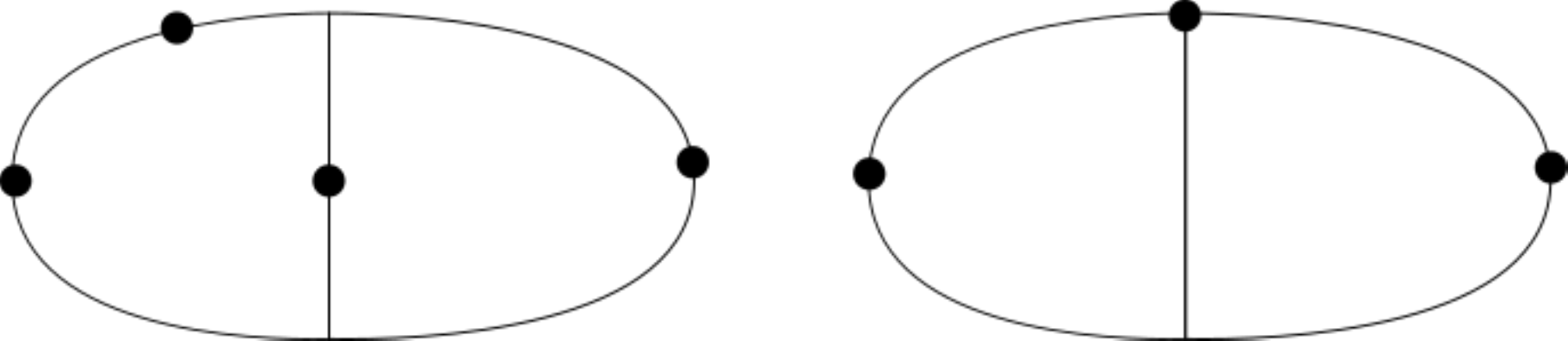}
    \caption{The configurations $x, x'$}
    \label{fig:changing_configs}
\end{figure}

As we can see, $G = \Th$, the graph homeomorphic to a theta, $n = 3$, $x$ and $x'$ are the pictured configurations, and $Y_1, Y_2, Y_3$ are the connected components of $G \sm x'$. Suppose that $f$ is an identifying function on $G$, with $f(x)$ as shown on the left. We are interested in knowing which of $Y_1, Y_2, Y_3$ contains $f(x')$. Intuitively, we could imagine that if we move $x_1$ up along the central edge then we ``force'' $f(x)$ to choose between either $Y_1$ or $Y_2$, and that it is impossible for $f(x')$ to lie in $Y_3$.

Now consider the configuration $x''$ from Figure \ref{fig:type1move}

$\,$

\begin{figure}[h]
    \labellist
    \small \hair 2pt
    \pinlabel $x_3$ at -15 55
    \pinlabel $f(x)$ at 50 125
    \pinlabel $x_1$ at 90 55
    \pinlabel $x_2$ at 210 60
    \pinlabel $x''_3$ at 330 130
    \pinlabel $x''_1$ at 360 30
    \pinlabel $x''_2$ at 410 25
    \pinlabel $Z_1$ at 395 70
    \pinlabel $Z_2$ at 295 55
    \endlabellist
    \centering
    \includegraphics[scale = .6]{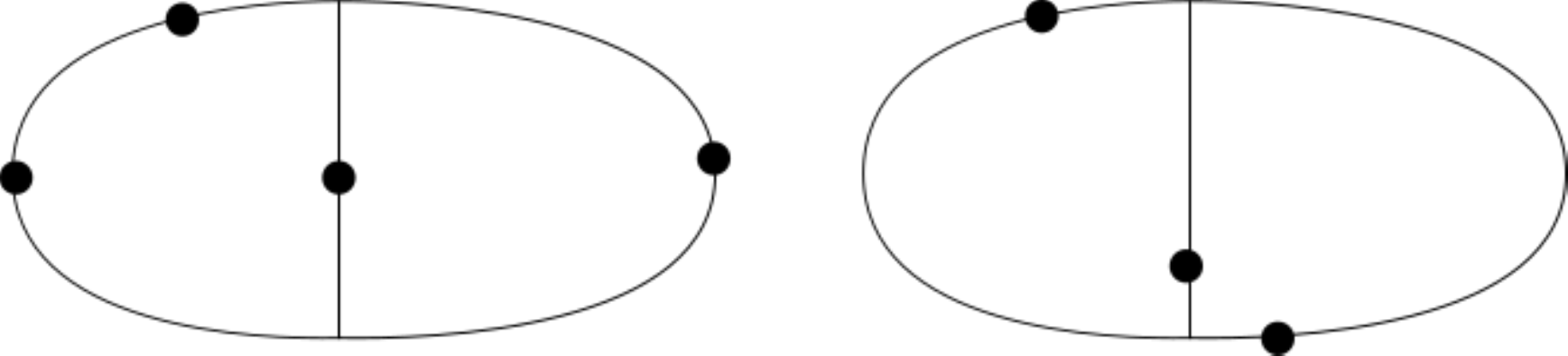}    
    \caption{The configuration $x''$}
    \label{fig:type1move}
\end{figure}

Note that $x''$ differs from $x$ only by moving tokens within edges, without ever entering vertices. Therefore, we would intuitively expect that $f(x'') \in Z_1$. These facts are not difficult to prove individually using basic concepts from point-set topology, but we would like generalizations of them that hold for every graph and any $n$. The main idea is that tokens entering branched vertices ``slice up'' the complement into more components, and along paths where no token enters a branched vertex, the components of the complement do not increase in number.
\subsection{Connectable Components}
Suppose $\al = (\al_1, \ldots, \al_n)$ is a path in $\Conf_n(G)$. We will define a relation between the components of $G \sm \al(0)$ and the components of $G \sm \al(1)$. First, we must define locally consistent systems of components along $\al$.

\begin{defin}
Suppose $\al = (\al_1, \ldots, \al_n)$ is a path in $\Conf_n(G)$, and suppose that for each $t \in I$ we have a connected component $X_t$ of $G \sm \al(t)$. The assignment $t \mapsto X_t$ is called a \textit{system of components along $\al$}. Such a system is \textit{locally consistent} if for each $t \in I$ and each point $p \in X_t$, there exists an $\ep > 0$ such that $p \in X_s$ for all $s \in (t - \ep, t + \ep)$.
\end{defin}

We now define the relation on the components of $G \sm \al(0)$ and the components of $G \sm \al(1)$ where $\al$ is a path in $\Conf_n(G)$.

\begin{defin}
Let $\al$ be a path in $\Conf_n(G)$. If $X$ is a component of $G \sm \al(0)$ and $Y$ is a component of $G \sm \al(1)$, then we say that $X$ and $Y$ are \textit{connectable by $\al$}, or write $X \lt_\al Y$ if there exists a locally consistent system $X_t$ of components along $\al$ such that $X_0 = X$ and $X_1 = Y$. We say that the system $X_t$ \textit{connects $X$ to $Y$} (along $\al$).
\end{defin}

We also must consider globally consistent systems of components defined on subsets of $\Conf_n(G)$.

\begin{defin}
Suppose $A \sbs \Conf_n(G)$. A \textit{(globally) consistent system of components on $A$} consists of a connected component $X_x$ of $G \sm x$, for each $x \in A$, such that for any path $\al: I \ra A$, the assignment $t \mapsto X_{\al(t)}$ is a locally consistent system of components along $\al$.
\end{defin}

The next proposition assures us of the essential fact that identifying functions give rise to consistent systems of components.

\begin{prop}\label{idfnlcs}
Suppose $f$ is an identifying function. Let $X_x$ be the component of $G \sm x$ containing $f(x)$. Then $x \mapsto X_x$ is a consistent system of components on $\Conf_n(G)$.
\end{prop}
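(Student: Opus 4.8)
The plan is to reduce everything to continuity. Fix a path $\al: I \ra \Conf_n(G)$ and set $\be = f \circ \al : I \ra G$, which is continuous because both $f$ and $\al$ are. By the definition of an identifying function, $\be(t) = f(\al(t))$ lies in $G \sm \al(t)$, and $X_{\al(t)}$ is precisely the component of $G \sm \al(t)$ containing $\be(t)$. To establish that $t \mapsto X_{\al(t)}$ is locally consistent (which, over all paths into $\Conf_n(G)$, is exactly what it means for $x \mapsto X_x$ to be globally consistent), I would fix $t_0 \in I$ and a point $p \in X_{\al(t_0)}$, and produce an $\ep > 0$ so that for all $s$ with $|s - t_0| < \ep$, the point $p$ lies in the same component of $G \sm \al(s)$ as $\be(s)$.

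First I would connect $p$ to $\be(t_0)$. Since they lie in the same component of $G \sm \al(t_0)$, there is a path $\ga$ in $G \sm \al(t_0)$ between them; its image $K$ is compact and disjoint from the finite token set $\{\al_1(t_0), \ldots, \al_n(t_0)\}$, so $\de := d(K, \{\al_i(t_0)\}) > 0$. Using continuity of each $\al_i$ and of $\be$ at $t_0$, I would choose $\ep$ small enough that for $|s - t_0| < \ep$ every token moves less than $\de/3$ and $\be$ moves less than $\de/3$. A triangle-inequality estimate then shows each token $\al_i(s)$ stays at distance $> 2\de/3$ from $K$, so $\ga$ remains a path in $G \sm \al(s)$; hence $p$ and $\be(t_0)$ lie in the same component of $G \sm \al(s)$.

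The one subtlety, which I expect to be the main obstacle, is that I must connect $p$ to the \emph{moving} point $\be(s)$ rather than to the fixed $\be(t_0)$. For this I would use that $G$ with the shortest-path metric is a (compact) geodesic space, so the metric ball $B(\be(t_0), \de/3)$ is path-connected: the geodesic from $\be(t_0)$ to $\be(s)$ has length $< \de/3$ and therefore stays inside this ball, while every point of the ball remains at distance $> \de/3$ from each token $\al_i(s)$ by the same triangle-inequality bound. Thus $\be(t_0)$ and $\be(s)$ lie in the same component of $G \sm \al(s)$, and chaining the two connections gives $p \in X_{\al(s)}$, as desired. Beyond this geodesic-ball argument, the only care needed is bookkeeping the distance estimates so that a single $\ep$ simultaneously keeps $K$ token-free and keeps the short geodesic from $\be(t_0)$ to $\be(s)$ token-free; everything else is routine point-set topology.
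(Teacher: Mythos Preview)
Your proof is correct and follows essentially the same approach as the paper's: connect $p$ to $f(\al(t_0))$ by a compact arc $K$ inside $G\sm\al(t_0)$, then extend to $f(\al(s))$ by a short connected piece and use continuity to keep everything disjoint from the tokens. The only cosmetic difference is that the paper uses the image $f(\al([s,t_0]))$ itself as the connecting piece (packaging the estimate as a single continuous function $\phi(s)=d(\al(s),\,K\cup f(\al([s,t_0])))$ that is positive at $t_0$), whereas you invoke a geodesic in $G$ and do the triangle-inequality bookkeeping by hand.
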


\begin{proof}
Let $\al = (\al_1, \ldots, \al_n)$ be path in $\Conf_n(G)$, and let $X_t = X_{\al(t)}$. It suffices to show that the assignment $t \mapsto X_t$ is a locally consistent choice of components along $\al$. Select $t \in I$ and $p \in X_t$. Let $K$ be a line segment in $X_t$ connecting $p$ to $f(\al(t))$, and let $d(\al(t), K) = \de > 0$. Let $\phi: I \ra \RR$ be the continuous function defined by \[\phi(s) = d(\al(s), K \cup f(\al([s, t])))\] (where we instead write $[t, s]$ when $t < s$). Since $\phi(t) = \de > 0$, $\phi$ is positive in some neighborhood $(t - \ep, t + \ep)$ of $t$. For any $s \in (t - \ep, t + \ep)$, we can see that the connected set $K \cup f(\al([s, t]))$ contains $p$ and $f(\al(s))$, and is disjoint from $\al(s)$. Therefore, it lies in a single component of $G \sm \al(s)$, which is $X_s$, as desired.
\end{proof}

\begin{rmk}
Our main method of disproving the existence of sections is to show that consistent systems of components do not exist on $\Conf_n(G)$.

In this section, we will prove two important facts that will allow us to compute the relation $\lt_\al$ for two classes of paths. These facts formalize the intuitive ideas discussed in the opening portion of this section. Given an identifying function $f$, these facts will allow us to determine by Proposition \ref{idfnlcs} the component of $G \sm x$ containing $f(x)$ by moving along paths in $\Conf_n(G)$.
\end{rmk}

\begin{defin}
We say that a path $\al: I \ra \Conf_n(G)$ is a \textit{Type I path} if no token ever \textit{enters} a vertex. That is, if $\al_i(t)$ is not a vertex, then for all $s > t$, $\al_i(s)$ is also not a vertex. We say that a path is a \textit{Type II path} if each coordinate of $\al$ except one is constant, and the nonconstant coordinate moves directly at constant speed from the interior of an edge to a vertex. A path that is either Type I or Type II is called \textit{basic}.
\end{defin}

\begin{rmk}
If $\al$ is any path in $\Conf_n(G)$ such that at least two tokens never enter a branched vertex simultaneously, then $\al$ is homotopic to a concatenation of basic paths via a homotopy $h_t$ such that the topology of $G \sm h_t(s)$ depends only on $s$, up to reparameterization. If two tokens simultaneously enter branched vertices, then we can slightly perturb $\al$ such that the tokens enter the branched vertices at different times, which does not change the relation $\lt_\al$. If $\al, \be$ are paths such that the concatenation $\al * \be$ is defined, and $X$ is a component of $G \sm \al(0)$ and $Z $ is a component of $\be(1)$, then it is clear that $X \lt_{\al * \be} Z$ if and only if there is some component $Y$ of $G \sm \al(1)$ such that $X \lt_\al Y$ and $Y \lt_\be Z$. Therefore, if we compute the relation $\lt_\al$ for basic paths $\al$, we can determine it for all paths.
\end{rmk}

Our current goal is to determine exactly which components are connectable by basic paths.

\subsection{Type I Paths}

\begin{prop}\label{typeiuniqueness}
If $\al$ is a Type I path, and $X$ is a component of $G \sm \al(0)$, then there is at most one component $Y$ of $G \sm \al(1)$ such that $X \lt_\al Y$.
\end{prop}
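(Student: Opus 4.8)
The plan is to translate the statement into a continuity argument about locally consistent systems. Suppose $X_t$ and $X'_t$ are two locally consistent systems of components along $\al$ with $X_0 = X'_0 = X$, witnessing $X \lt_\al Y$ and $X \lt_\al Y'$; I want to conclude $Y = X_1 = X'_1 = Y'$. Consider the set $T = \{t \in I : X_t = X'_t\}$, which contains $0$. Since distinct components of $G \sm \al(t)$ are disjoint, local consistency immediately shows $T$ is open: if $X_t = X'_t$, pick $p$ in this common component; then $p \in X_s \cap X'_s$ for all $s$ near $t$, forcing $X_s = X'_s$. The whole difficulty is thus to promote openness to ``$T = I$'', which I will do by showing that whenever $[0, \tau) \sbs T$ one also has $\tau \in T$, and then running the usual supremum argument: taking $\tau = \sup\{u : [0,u) \sbs T\}$, this closedness-from-below gives $\tau \in T$, and openness of $T$ then forces $\tau = 1$.

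The heart of the argument, and the place where the Type I hypothesis is essential, is the step $[0,\tau) \sbs T \Rightarrow \tau \in T$. Fix $p \in X_\tau$ and $p' \in X'_\tau$; by local consistency both lie in $X_s$ (resp.\ $X'_s$) for $s < \tau$ near $\tau$, and choosing such an $s \in [0,\tau) \sbs T$ gives $X_s = X'_s$, so $p$ and $p'$ are joined by a path $\ga$ in $G \sm \al(s)$. It remains to see that $p,p'$ lie in the same component of $G \sm \al(\tau)$, i.e.\ that $\ga$ can be rerouted off of $\al(\tau)$. Here I use that for a Type I path the set of times at which a given token sits at a vertex is downward closed: if $\al_i(\tau)$ is a vertex then $\al_i$ is constant at that vertex on all of $[0,\tau]$, so such ``vertex tokens'' do not move as $s \uparrow \tau$. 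Consequently, for $s$ close enough to $\tau$, $\al(s)$ and $\al(\tau)$ differ only in the positions of the tokens lying in edge interiors, and each such token has merely slid a short distance $z_i^s \to z_i$ within a single edge, with nothing else between the two positions.

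With this local picture in hand, the rerouting is routine: each interior token contributes a short transition arc $[z_i^s, z_i]$ whose only frontier points in $G$ are its two endpoints. Since $\ga$ avoids $z_i^s$, any excursion of $\ga$ into this arc must both enter and leave through $z_i$ and can be excised, and the remaining tangential contacts of $\ga$ with $z_i$ (approached from the far side of the edge) can be pushed off, yielding a path from $p$ to $p'$ avoiding every $z_i$ while still avoiding the unmoved vertex tokens; this path lies in $G \sm \al(\tau)$, so $X_\tau = X'_\tau$ and $\tau \in T$. The main obstacle is precisely this closedness step, and it is worth noting why it only works ``from below'': approaching $\tau$ from above would allow a token to leave a vertex between $\tau$ and $s$, which merges components rather than perturbing them, and connectivity at the coarser time $s$ would not imply connectivity at $\tau$. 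The downward-closedness of vertex times supplied by the Type I hypothesis is exactly what rules this out.
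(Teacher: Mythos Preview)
Your argument follows the same overall strategy as the paper's: both consider the supremum/infimum of times at which the two systems agree, use local consistency for openness, and invoke the Type~I hypothesis for the closedness-from-below step. The difference is only in how that last step is executed. The paper takes an \emph{embedded} arc $K$ from $p$ to $q$ inside $G \sm \al(s-\ep)$ and observes that since $p,q$ lie in different components of $G \sm \al(s)$, some token must hit $K$ during $(s-\ep, s]$; at the first such time the token lies in $\pa K$, which consists only of $\{p,q\}$ together with the branched vertices on $K$, directly contradicting Type~I. Your explicit rerouting reaches the same conclusion but has one loose end: the claim that ``any excursion of $\ga$ into $[z_i^s, z_i]$ must both enter and leave through $z_i$'' fails if $p$ (or $p'$) happens to lie in the open arc $(z_i^s, z_i)$, since then $\ga$ begins inside the arc without having entered through $z_i$. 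This is easily repaired---since $p, p' \in G \sm \al(\tau)$, you may shrink $s$ further so that every transition arc $[z_i^s, z_i]$ misses $p$ and $p'$---but it should be stated. With that fix in place (and especially if you take $\ga$ embedded), the rerouting in fact collapses: an embedded $\ga$ passing through $z_i$ would be forced into $(z_i^s, z_i)$ with no second exit, so $\ga$ already lies in $G \sm \al(\tau)$.
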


\begin{proof}
Suppose $Y, Z$ are distinct components of $G \sm \al(1)$, and that $X \lt_\al Y$ and $X \lt_\al Z$. Suppose that $Y_t, Z_t$ are locally consistent systems of components along $\al$ connecting $X$ to $Y, Z$ respectively. Let
\[s = \inf \{t \in I : Y_t \neq Z_t\}.\]

If $Y_s = Z_s$, then for any $p \in Y_t$, we can take arbitrarily small $\ep > 0$ such that $Y_{s + \ep} \neq Z_{s + \ep}$ and $p \in Y_{s + \ep}$. However, this implies that $p \notin Z_{s + \ep}$ for arbitrarily small $\ep$, which contradicts the fact that $Z_t$ is a locally consistent system of components. Therefore, $Y_s \neq Z_s$.

Now, select $p \in Y_s, q \in Z_s$. There exists an $\ep > 0$ such that for all $r \in [s - \ep, s)$, we have $p, q \in Y_r$. Let $K$ be an embedded line segment in $Y_{s - \ep}$ connecting $p$ to $q$. Since $Y_s \neq Z_s$, there is some $r \in (s - \ep, s]$ such that for some $i = 1, \ldots, n$ we have $\al_i(r) \in K$. We can in fact assume that $\al_i(r) \in \pa K$ (where $\pa$ refers to the topological boundary as a subspace of $G$) by possibly taking a smaller value of $r$. However, by our choice of $\ep$, $\al_i(r) \neq p, q$, so $\al_i(r) = v$ for some branched vertex contained in $K$, since these are the only other boundary points of $K$. This contradicts the fact that $\al$ is a Type I path, and we are done.
\end{proof}

We now describe explicitly, for any type I path $\al$ and any component $X$ of $G \sm \al(0)$, the unique component $Y$ of $G \sm \al(1)$ such that $X \lt_\al Y$.

\begin{prop}\label{principle1}
Suppose $\al = (\al_1, \ldots, \al_n)$ is a Type I path, and that $X$ is a component of $G \sm \al(0)$. Define a component $Y$ of $G \sm \al(1)$ as follows:
\begin{enumerate}[(i)]
    \item If $X$ contains a vertex $v$, then $Y$ is the component containing $v$.
    
\begin{figure}[h]
    \labellist
    \hair 4pt
    \pinlabel $X$ at 80 180
    \endlabellist
    \centering
    \includegraphics[scale = .3]{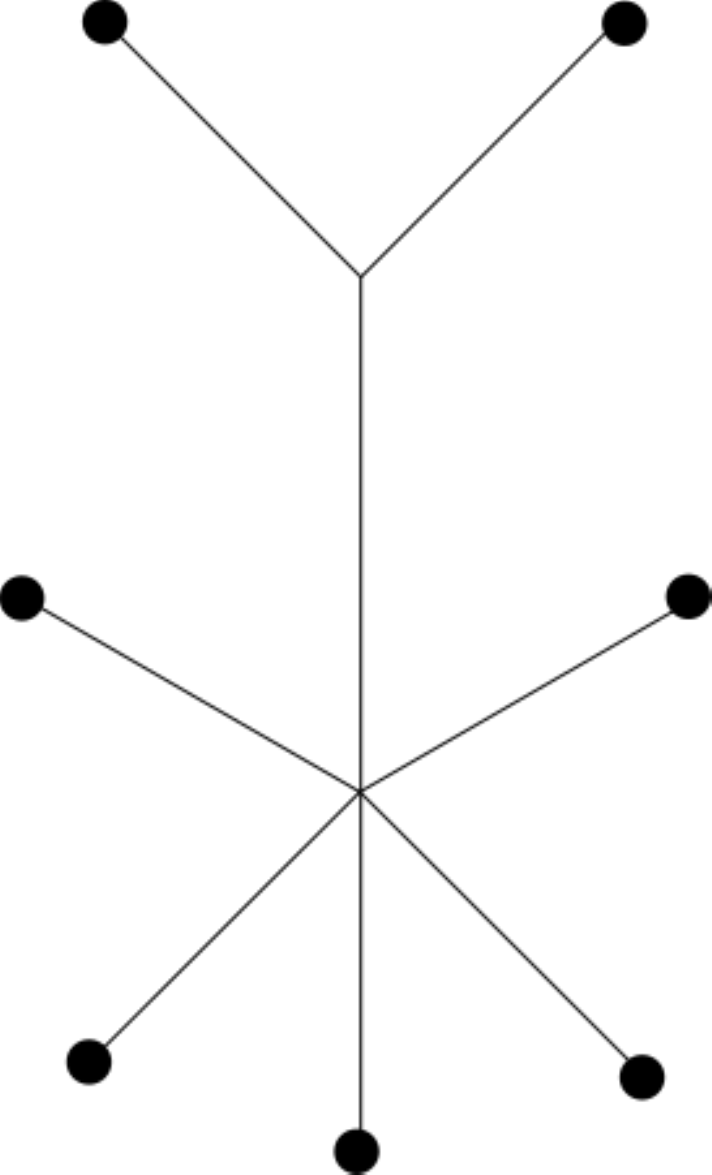}
    \caption{The case (i)}
\end{figure}
    
    \item If $X$ is an open interval $(\al_i(0), \al_j(0))$ in an edge $e$ and $\al_i(t), \al_j(t) \in e$ for all $t$, then $Y = (\al_i(1), \al_j(1)) \sbs e$.
    
\begin{figure}[h]
    \labellist
    \small \hair 2pt
    \pinlabel $x_i$ at 15 35
    \pinlabel $X$ at 150 40
    \pinlabel $x_j$ at 300 35
    \endlabellist
    \centering
    \includegraphics[scale  = .5]{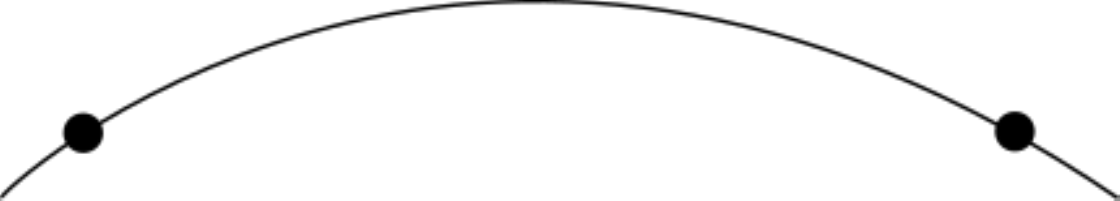}
    \caption{The case (ii)}
\end{figure}

    \item If $X$ is an open interval $(\al_i(0), \al_j(0))$ in an edge $e$ and $\al_i(0) = v$ is an endpoint of $e$, and $\al_i(t) \notin e$ for some $t$, then $Y$ is the component containing $v$.
    
\begin{figure}[h]
    \labellist
    \small\hair 2pt
    \pinlabel $x_j$ at -10 90 
    \pinlabel $X$ at 100 120
    \pinlabel $x_i$ at 235 105
    \endlabellist
    \centering
    \includegraphics[scale = .4]{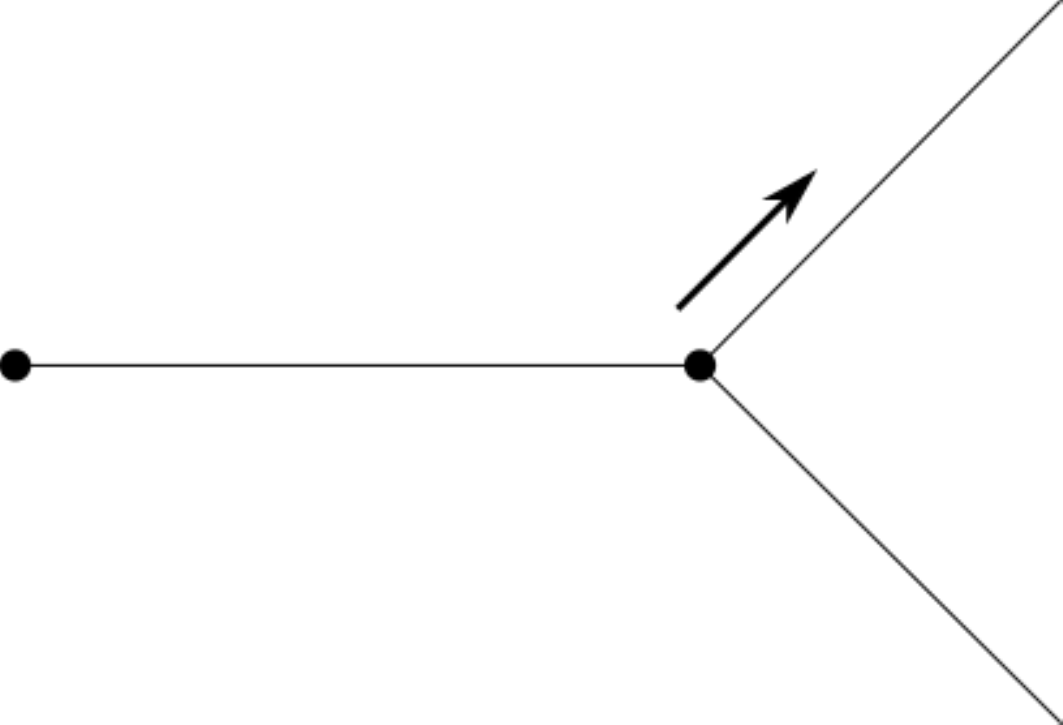}
    \caption{The case (iii)}
\end{figure}

    \item If $X$ is a loop $e$ without its branched vertex $v = \al_i(0)$, then if $\al_i$ is constant, $Y = X$, and otherwise, $Y$ is the component containing $v$.
    
    $\,$

\begin{figure}[h]
    \labellist
    \small\hair 2pt
    \pinlabel $x_i$ at 90 110
    \pinlabel $X$ at 90 230
    \endlabellist
    \centering
    \includegraphics[scale = .4]{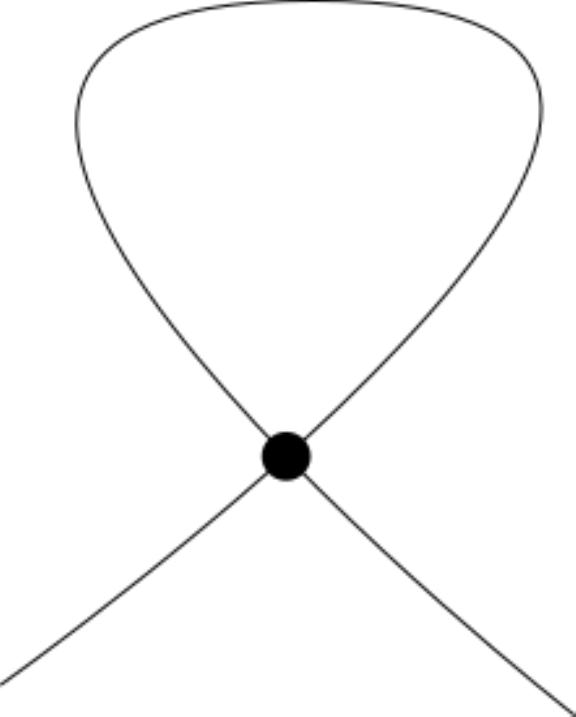}
    \caption{The case (iv)}
\end{figure}
\end{enumerate}
Then, $X \lt_\al Y$. 
\end{prop}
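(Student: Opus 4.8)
The plan is to exhibit, in each case, an explicit locally consistent system of components $t \mapsto X_t$ along $\al$ with $X_0 = X$ and $X_1 = Y$; uniqueness of the target is already furnished by Proposition \ref{typeiuniqueness}, so only existence of such a system is at issue. The backbone of the argument is two structural observations about Type I paths, both immediate from the contrapositive of the definition (a token at a vertex at time $t$ is at a vertex, hence constantly at that same vertex, for all earlier times, since a continuous path into the discrete vertex set is constant). First, if $v$ is a vertex with $v \in G \sm \al(0)$, then $v \in G \sm \al(t)$ for all $t$, so the component of $G \sm \al(t)$ containing $v$ is defined throughout. Second, a token beginning in the interior of an edge $e$ remains in the open edge $e$ forever (it is never again at a vertex, and $G$ minus its vertices is a disjoint union of open edges), while a token beginning at a vertex $v$ stays at $v$ on a maximal initial interval $[0, t_0]$ and thereafter is confined to the interior of a single edge incident to $v$. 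Consequently tokens sharing an edge never change their cyclic order, so an interval cut out by two adjacent tokens persists as a single component.

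With these in hand, cases (i) and (ii) are direct. In case (i), set $X_t$ to be the component of $G \sm \al(t)$ containing $v$, well defined by the first observation, with $X_0 = X$ and $X_1 = Y$. Local consistency follows exactly as in Proposition \ref{idfnlcs}: given $p \in X_t$, join $p$ to $v$ by an embedded arc $K \sbs X_t$, note $d(\al(t), K) > 0$, and use continuity of $\al$ to find $\ep$ with $K \cap \al(s) = \es$ for $s \in (t - \ep, t + \ep)$, whence $p$ and $v$ lie in the common component $X_s$. In case (ii), set $X_t = (\al_i(t), \al_j(t)) \sbs e$; by the order-preservation observation this remains a component throughout, and local consistency is immediate from continuity of $\al_i$ and $\al_j$.

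For cases (iii) and (iv) the endpoint $v = \al_i(0)$ is occupied at time $0$ but freed once token $i$ departs its edge, so I would define the system piecewise across the departure time $t_0$ (the first time an endpoint token of $X$ leaves $e$; note $t_0 < 1$ by hypothesis). On $[0, t_0]$, where both endpoint tokens still lie in $\bar e$, let $X_t$ be the interval $(\al_i(t), \al_j(t)) \sbs e$ in case (iii), respectively the punctured loop $e \sm \{v\}$ in case (iv); on $(t_0, 1]$, where a vertex $u$ has been freed, let $X_t$ be the component of $G \sm \al(t)$ containing $u$. Each piece is locally consistent by the arguments of cases (ii) and (i) respectively, and $X_0 = X$ holds by inspection; that $X_1 = Y$ holds because once token $i$ has also departed the edge $e$ is empty, so the freed endpoints lie in a common component and the component of $u$ at time $1$ equals the component of $v$.

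The one delicate point, which I expect to be the main obstacle, is local consistency at the seam $t = t_0$: there $X_{t_0}$ is the small interval (or arc), while $X_s$ for $s$ slightly larger is the possibly much larger component of the freed vertex. The key is that immediately after $t_0$ that vertex becomes free, so the small interval/arc connects to it and is therefore contained in $X_s$; combined with continuity, so that a fixed $p \neq u$ in the small set is not overtaken by the departing token $\al_i(s) \to u$, this yields $p \in X_s$ for $s$ just above $t_0$, while $p \in X_s$ for $s$ just below $t_0$ is the case (ii)/(iv) computation. This closes the seam and completes all four cases.
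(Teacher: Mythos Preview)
Your proposal is correct and follows essentially the same approach as the paper. The paper defines the system $Y_t$ by applying the proposition statement to the restricted path $\al|_{[0,t]}$ (which amounts to your explicit piecewise definition) and then verifies local consistency case by case, handling the seam in (iii) by the same direct argument you give; the only cosmetic difference is that for (iv) the paper adds a vertex to the loop and reduces to case (i), whereas you treat (iv) directly in parallel with (iii).
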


\begin{proof}

For any $t \in I$, consider the restricted path $\al|_{[0, t]}$. Since any restriction of a Type I path remains Type I, the statement of the proposition gives us a component $Y_t$ of $G \sm \al(t)$. We claim this is a locally consistent system of components along $\al$. We proceed by proving each of the cases separately.

\begin{enumerate}[(i)]
    \item Select $t \in I$ and $p \in Y_t$. Since $\al$ is Type I, $v \in Y_t$, so there exists a compact connected $K \sbs Y_t$ containing $p$ and $v$. Let $\ze = d(K, \al(t))$, and select $\ep$ such that for all $s \in (t - \ep, t + \ep)$ and all $i = 1, 2, \ldots, n$, we have $d(\al_i(t), \al_i(s)) < \ze$. Since $v \in K$, we have $K \sbs Y_s$ for all $s \in (t - \ep, t + \ep)$, so $p \in Y_s$, as desired.
    
    \item Select $t \in I$ and $p \in Y_t$. Let \[\ze = \min\{d(\al_i(t), p), d(\al_j(t), p)\},\] and select $\ep$ such that $d(\al_\eta(t), \al_\eta(s)) < \ze$ for $\eta = i, j$ and $s \in (t - \ep, t + \ep)$. Then $p \in (\al_i(s), \al_j(s)) = Y_s$ for all $s \in (t - \ep, t + \ep)$, as desired.
    
    \item Select $t \in I$. If $\al_i(t + \ep) = v$ for some $\ep > 0$, then apply (ii) to the restricted path $\al|_{[0, t + \ep]}$. If $\al_i(t - \ep) \notin e$ for some $\ep > 0$, then apply (i) to the restricted path $\al|_{[t - \ep, 1]}$. It suffices to consider the remaining case, where $\al_i(t) = v$ but $\al_i(t + \ep) \notin e$ for all $\ep > 0$. Select $p \in Y_t$. As before, we can set \[\ze = \min\{d(\al_i(t), p), d(\al_j(t), p)\},\] and select $\ep$ such that for all $s \in (t - \ep, t + \ep)$ we have $d(\al_\eta(s), \al_\eta(t)) < \ze$ for $\eta = i, j$. If $s \leq t$, then $Y_s = (\al_i(s), \al_j(s))$, which contains $p$. If $s > t$, then $Y_s$ contains $v$. Since $\al_i(s) \notin e$, $[v, p]$ is disjoint from $\al(s)$, so $p \in Y_s$.
    
    \item We can modify the cellular structure of $G$ such that $G$ has no loops, perhaps by adding a vertex to each loop. Since none of our techniques depend on the vertices of $G$ being branched, we can reduce to part (i).
\end{enumerate}
\end{proof}

\subsection{Type II Paths}
We now turn to Type II paths. We begin with a lemma.

\begin{lemma}\label{growingcomponents}
Suppose $\al$ is a path in $\Conf_n(G)$, and $X_t$ is a system of components along $\al$. If $X_t \sbs X_s$ whenever $t < s$, then $X_t$ is the \textit{only} locally consistent system of components along $\al$ starting at $X_0$.
\end{lemma}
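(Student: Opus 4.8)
The plan is to prove uniqueness by a one-directional connectedness (supremum) argument, after first recording that the growing hypothesis already forces $X_t$ to be locally consistent. For local consistency, fix $t$ and $p \in X_t$. The forward direction is immediate: since $X_t \sbs X_s$ for every $s > t$, we have $p \in X_s$ for all such $s$. For the backward direction (only needed when $t > 0$) I would argue as in the proof of Proposition \ref{idfnlcs}. Choose $s_0 < t$ and a point $q \in X_{s_0} \sbs X_t$, join $p$ to $q$ by a compact arc $K \sbs X_t$, and set $d(K, \al(t)) = \ze > 0$. By continuity of $\al$, there is an $\ep \in (0, t - s_0)$ so that $K$ avoids $\al(s)$ for all $s \in (t - \ep, t)$; then $p$ and $q$ lie in a common component of $G \sm \al(s)$, and since $q \in X_{s_0} \sbs X_s$ for $s_0 < s < t$, that component is exactly $X_s$, giving $p \in X_s$.

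For the uniqueness statement, let $W_t$ be any locally consistent system with $W_0 = X_0$, and set $J = \{t \in I : W_s = X_s \text{ for all } s \in [0, t]\}$, an interval containing $0$; let $s^* = \sup J$. I would first show $s^* \in J$. Agreement on $[0, s^*)$ is automatic from the definition of the supremum. To get agreement at $s^*$ when $s^* > 0$, take any $p \in W_{s^*}$; local consistency of $W$ yields $p \in W_s$ for $s$ slightly below $s^*$, and then $p \in W_s = X_s \sbs X_{s^*}$ by the growing hypothesis. Hence every point of $W_{s^*}$ lies in $X_{s^*}$, so $W_{s^*} \sbs X_{s^*}$, and since distinct components are disjoint this forces $W_{s^*} = X_{s^*}$ (the case $s^* = 0$ is the hypothesis $W_0 = X_0$).

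Finally I would rule out $s^* < 1$. Pick $p \in X_{s^*} = W_{s^*}$; local consistency of $W$ at $s^*$ gives an $\ep > 0$ with $p \in W_s$ for $s \in (s^*, s^* + \ep)$, while the growing hypothesis gives $p \in X_{s^*} \sbs X_s$ for those same $s$. Thus $W_s$ and $X_s$ share the point $p$ and therefore coincide, producing agreement on all of $[0, s^* + \ep)$ and contradicting $s^* = \sup J$. Hence $s^* = 1 \in J$, so $W_t = X_t$ for all $t$. The only place demanding care is the interface at the supremum: establishing agreement exactly at $s^*$ and then pushing it strictly past $s^*$. Both steps are inherently one-directional and succeed precisely because the inclusion $X_t \sbs X_s$ is monotone in $t$, so the local consistency of the competitor $W$ can always be matched against the growing family; I expect the endpoint bookkeeping ($s^* = 0$ and $s^* = 1$) to be the only fiddly point, with no genuine difficulty beyond it.
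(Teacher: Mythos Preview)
Your proof is correct and follows essentially the same route as the paper: both arguments locate the critical time where agreement first fails (you via $\sup J$, the paper via $\inf\{t : X_t \neq Y_t\}$) and then combine backward local consistency of the competitor with the monotone inclusion $X_r \sbs X_s$ to force a contradiction. Your version is a bit more explicit—spelling out the local consistency of $X_t$ that the paper waves off, and splitting the endpoint argument into ``agreement at $s^*$'' followed by ``push past $s^*$''—but the underlying mechanism is identical.
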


\begin{proof}
We can easily observe that $X_t$ is locally consistent, so it suffices to show that this is the unique locally consistent system of components along $\al$ starting at $X_0$. Suppose $Y_t$ is another such system, and let
\[s = \inf \{t \in I : X_t \neq Y_t\}.\]
By the argument from the proof of Proposition \ref{typeiuniqueness}, we can see that in fact $X_s \neq Y_s$. Select $p \in Y_s$, and $\ep$ such that $p \in Y_{r}$ for all $r \in [s - \ep, s]$. Since $Y_r = X_r$ by assumption, and $X_r \sbs X_s$, we have $p \in X_s$, a contradiction.
\end{proof}

We can now compute the relation $\lt_\al$ when $\al$ is a Type II path.

\begin{prop}\label{principle2}
Let $\al = (\al_1, \ldots, \al_n)$ be a Type II path where $\al_i$ is the unique nonconstant coordinate, and therefore has constant speed, and $\al_i(1) = v$ for some vertex $v$. Suppose that the components of $G \sm \al(0), G \sm \al(1)$ near $v$ are labelled as in the picture below (note that $X_1, X_2$ and $Y_1, \ldots, Y_k$ are not necessarily distinct). If $X = X_1$, and $X_1, X_2$ are distinct, then $Y = Y_1$, and if $X = X_2$ then $Y = Y_j$ for some $j \geq 2$ . If $X = Z$ for some other component $Z$, then $Y = Z$.

\begin{figure}[h]
    \labellist
    \small\hair 2pt
    \pinlabel $X_2$ at 45 220
    \pinlabel $X_1$ at 45 80
    \pinlabel $Y_1$ at 315 130
    \pinlabel $Y_2$ at 315 220
    \pinlabel $Y_3$ at 290 195
    \pinlabel $Y_4$ at 295 150
    \pinlabel $Y_5$ at 355 155
    \endlabellist
    \centering
    \includegraphics[scale = .7]{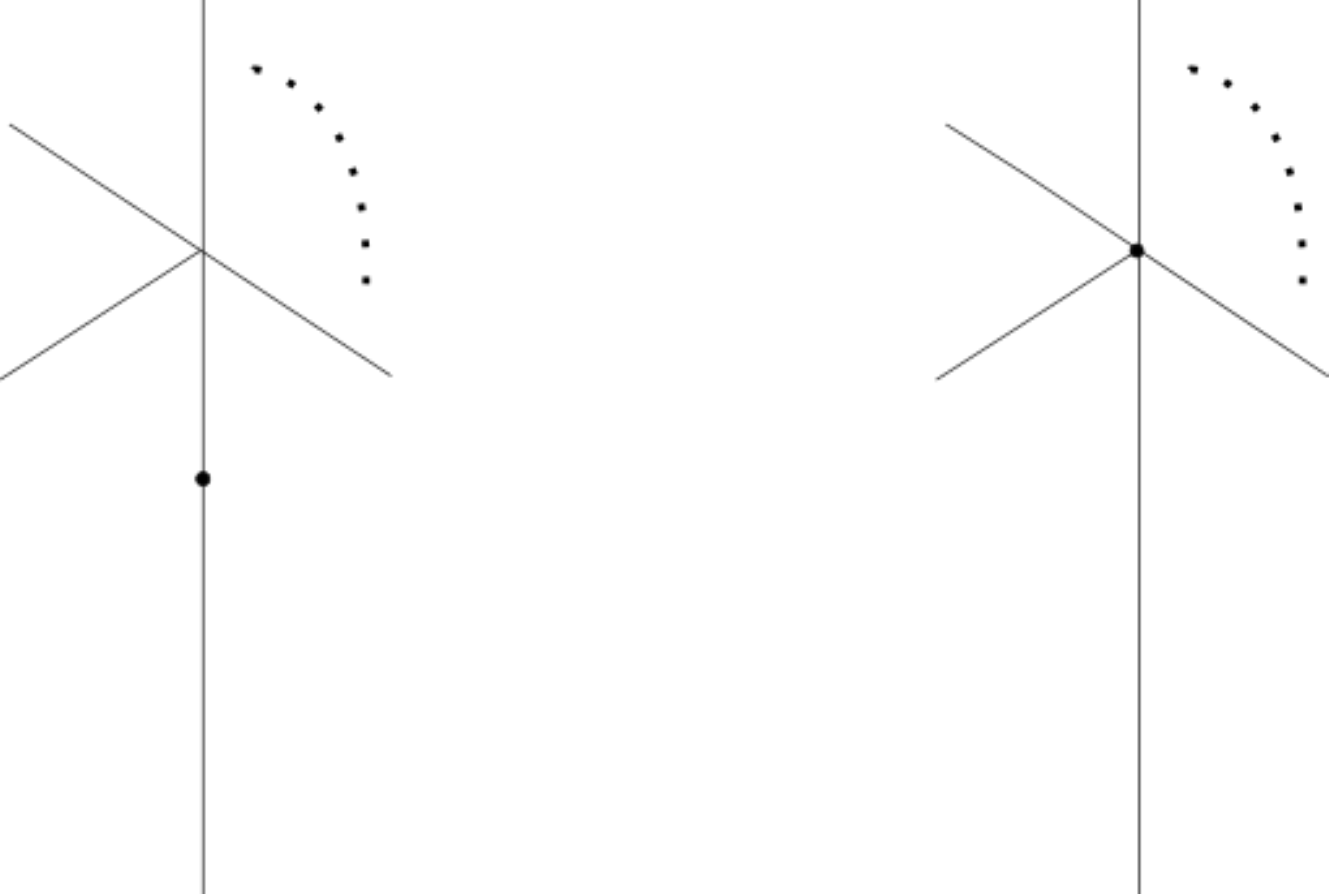}
\end{figure}
\end{prop}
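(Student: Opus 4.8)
The plan is to reduce everything to the already-established behavior of Type~I paths by reversing time. Let $\ol{\al}$ denote the reverse path $\ol{\al}(t) = \al(1-t)$. Since $\al_i$ moves at constant speed from the interior of an edge $e$ into the vertex $v$, in $\ol{\al}$ the token $\al_i$ starts at $v$ and immediately moves into the interior of $e$, never afterwards returning to a vertex; hence $\ol{\al}$ is a Type~I path. This is the crucial observation: it lets us apply Propositions~\ref{typeiuniqueness} and~\ref{principle1}, which together completely describe $\lt_{\ol{\al}}$.

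First I would record that connectability is symmetric under reversal. Directly from the definition, a locally consistent system $X_t$ along $\al$ gives a locally consistent system $t \mapsto X_{1-t}$ along $\ol{\al}$, since the defining neighborhood condition is symmetric in $t$. Thus $X \lt_\al Y$ if and only if $Y \lt_{\ol{\al}} X$. It therefore suffices to compute, for each component $W$ of $G \sm \ol{\al}(0) = G \sm \al(1)$, the component of $G \sm \ol{\al}(1) = G \sm \al(0)$ to which it is connectable under $\ol{\al}$: Proposition~\ref{typeiuniqueness} guarantees there is at most one, and Proposition~\ref{principle1} identifies it.

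Next I would run the casework of Proposition~\ref{principle1} on the components near $v$. For $Y_1$, the open interval along $e$ directly behind the entering token, the moving token $\al_i$ keeps one endpoint on $e$ throughout (case~(ii) when the opposite end is another token, or case~(i) when $Y_1$ reaches a vertex), so $Y_1 \lt_{\ol{\al}} X_1$, because $(\ol{\al}_i(1),\ol{\al}_j(1)) = (\al_i(0),\al_j(0)) = X_1$. For each $Y_j$ with $j \geq 2$, lying along a different edge at $v$, the token $\al_i$ leaves that edge as it moves into $e$; whether $Y_j$ contains a further vertex (case~(i)), is a half-open interval whose moving endpoint departs the edge (case~(iii)), or sits on a loop (case~(iv)), Proposition~\ref{principle1} assigns it the component containing $v$ at time $\ol{\al}(1) = \al(0)$, namely $X_2$. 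Finally, any component $Z$ not meeting $v$ is untouched by the motion, so $Z \lt_{\ol{\al}} Z$.

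Reversing the arrows then yields exactly the proposition: $X_1$ is connectable only to $Y_1$ (it is the sole component mapping to $Y_1$, using $X_1 \neq X_2$), the component $X_2$ is connectable precisely to those $Y_j$ with $j \geq 2$, and every other $Z$ only to itself. I expect the main obstacle to be bookkeeping in the casework: verifying that $\ol{\al}$ genuinely lands in the correct case of Proposition~\ref{principle1} for each $Y_j$, and treating the degenerate situations (a loop at $v$, or the excluded case $X_1 = X_2$) so that the labeling in the figure is unambiguous. As an alternative for the two uniqueness claims one could instead apply Lemma~\ref{growingcomponents} directly to $\al$: the far-side component behind the token grows monotonically to $Y_1$, pinning down $X_1 \lt_\al Y_1$, and a component $Z$ disjoint from the motion is constant, pinning down $Z \lt_\al Z$. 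The reversal argument, however, handles the negative statement $X_2 \not\lt_\al Y_1$ most cleanly.
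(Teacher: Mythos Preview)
Your argument is correct and takes a genuinely different route from the paper's. The paper works directly on $\al$, case by case: for $X = X_1$ it notes that the component of $G \sm \al(t)$ containing $X_1$ is increasing in $t$ and invokes Lemma~\ref{growingcomponents} to get uniqueness; for $X = X_2$ it uses that each restriction $\al|_{[0,s]}$ with $s < 1$ is Type~I, applies Proposition~\ref{principle1}(i) to pin the component at time $s$ to the one containing $v$, and then passes to the limit $s \to 1$ by hand; the case $X = Z$ is left implicit. You instead reverse time once and absorb the entire proposition into the Type~I classification already proved. This buys uniformity---one mechanism covers all three cases---and in fact yields the sharper conclusion that $X_2$ is connectable to \emph{every} $Y_j$ with $j \geq 2$, whereas the paper only shows that any $Y$ with $X_2 \lt_\al Y$ must be among the $Y_j$. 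The price is the small observation that $\lt$ is symmetric under path reversal and the bookkeeping of matching each $Y_j$ to the right clause of Proposition~\ref{principle1}, both of which you carry out. You even anticipate the paper's line of attack in your closing paragraph, so the comparison is one you have already made.
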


\begin{proof}
First, suppose that $X = X_1$ and that $X_1, X_2$ are distinct. Then, we can define a system of components $Z_t$, such that $Z_t$ is the component of $G \sm \al(t)$ containing $X_1$. By Lemma \ref{growingcomponents}, this is the unique locally consistent system of components starting at $X_1$, so $Y = Y_1$.

Now suppose that $X = X_2$, and that $Z_t$ is a locally consistent system of components starting at $X_2$. It suffices to show that \[Z_1 \sbs Y_2 \cup \cdots \cup Y_k.\] Select $p \in Z_1$, and $\ep$ such that $p \in Z_s$ whenever $s \in [1-\ep, 1]$. For any $s < 1$, we note that the restriction $\al|_{[0, s]}$ is a Type I path, which implies that $Z_s$ is the component of $G \sm \al(s)$ containing $v$ by Proposition \ref{principle1}(i). When $Y_1 = Y_j$ for some $j \geq 2$, this implies that \[Z_s = (Y_2 \cup \cdots \cup Y_k) \sm \{\al_i(s)\},\]

and since $p \in Z_{1 - \ep}$, this implies that $p$ is in $Y_j$ for some $j \geq 2$, as desired. When $Y_1 \neq Y_j$ for all $j \geq 2$, then
\[Z_s = Y_2 \cup \cdots \cup Y_k \cup [\al_i(s), v].\]

If we can show $p \notin [\al_i(1-\ep), v]$, then we are done. However, this is impossible, since otherwise there would be some $s \in (1-\ep, 1)$ such that $\al_i(s) = p$, which contradicts our choice of $\ep$. This completes the proof.
\end{proof}

Propositions \ref{principle1} and \ref{principle2} are generally used in conjunction with Proposition \ref{idfnlcs}. If we have a basic path $\al$, and an identifying function $f$ with $f(\al(0)) \in X$ for some component $X$ of $G \sm \al(0)$, then we obtain by Proposition \ref{idfnlcs} a locally consistent choice of components along $\al$. We can then use either Proposition \ref{principle1} or Proposition \ref{principle2} to determine the possible components of $G \sm \al(1)$ which can contain $f(\al(1))$.

\subsection{Applications}

Our first important application of Propositions \ref{principle1} and \ref{principle2} is the ``chasing'' lemma:
\begin{lemma}[Chasing]\label{chasing}
Suppose $f$ is an identifying function, and $x = (x_1, \ldots, x_n)$ is a configuration. Let $X$ denote the connected component of $G \sm x$ containing $f(x)$, and suppose that $X \cup \{x_1\}$ is simply connected. Then there exists some point $p \in X$ and an integer $2 \leq j \leq n$ such that if $x' = (p, x_2, \ldots, x_n)$, then $f(x') \in (x_j, p)$. 
\end{lemma}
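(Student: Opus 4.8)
The plan is to literally ``chase'' the value of $f$ with the first token. By hypothesis $T := X \cup \{x_1\}$ is a finite tree, and $f(x) \in X \sbs T$. I will move the first token from $x_1$ steadily toward the current value of $f$, decompose this motion into basic paths (justified by the reduction in the remark preceding Proposition \ref{typeiuniqueness}), and use Proposition \ref{idfnlcs} together with Propositions \ref{principle1} and \ref{principle2} to track the component of $G \sm x$ containing the value of $f$. The role of the simple-connectivity hypothesis is that inside $T$ there is always a unique embedded arc from the first token to $f$, so ``moving toward $f$'' is unambiguous and the first token can only travel deeper into $T$.

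Concretely, at a generic stage the first token sits on the boundary of a subtree $S \sbs T$ with its companion value of $f$ lying in $S$, and I move the token along the arc in $S$ joining it to $f$. While the token travels through the interior of an edge this is a Type I path, and the interval cases (ii)--(iii) of Proposition \ref{principle1} show that the value of $f$ stays in the open interval on the far side of the token, shrinking from the near end. Whenever the token reaches a branched vertex $v$, I apply Proposition \ref{principle2}: since $f$ lies on the segment between the token and $v$, i.e.\ in the component labelled $X_2$, its value is carried into one of the components $Y_j$ with $j \geq 2$ lying beyond $v$. Thus at each branched vertex $f$ commits to a single branch, the subtree $S$ ahead of the token is replaced by a strictly smaller one, and $f$ always remains strictly ahead of the token, never slipping behind it.

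Because $T$ has only finitely many branched vertices, only finitely many branching steps can occur, so the process reaches a stage where the token sits at a point $p$ on an edge $e$, the value of $f$ lies on $e$ beyond $p$, and no branched vertex separates $f$ from the far endpoint $w$ of $e$. This $w$ is a leaf of the current subtree, hence either a token $x_j$ (with $j \geq 2$) or a free vertex of $G$. In the first case I stop: the value of $f$ then lies in the open interval $(x_j, p) \sbs e$, which is exactly the desired conclusion for $x' = (p, x_2, \ldots, x_n)$, and $p \in X$ since the token never left $T$.

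The main work is to rule out the free-vertex case, and this is where the hypothesis that $f$ is an \emph{identifying} function, defined on all of $\Conf_n(G)$, is essential. If $w$ were a free vertex, I would keep pushing the first token along $e$ toward $w$; since nothing lies between the token and $w$, this is a legitimate path in $\Conf_n(G)$, and throughout it the value of $f$ stays, by Proposition \ref{idfnlcs} and the interval cases of Proposition \ref{principle1}, in the component abutting $w$ bounded below by the token --- an interval whose diameter shrinks to $0$ as the token approaches $w$. By continuity this forces $f(w, x_2, \ldots, x_n) = w$, contradicting that $f$ avoids every token. Hence $w$ must be a token, completing the argument. I expect this exclusion of free vertices, together with the bookkeeping needed to verify via Proposition \ref{principle2} that $f$ genuinely stays ahead of the token at every branched vertex, to be the crux; the remaining steps are routine once the basic-path machinery of this section is in hand.
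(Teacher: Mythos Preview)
Your proposal is correct and is essentially the paper's own argument: chase token $1$ into the tree $X \cup \{x_1\}$, invoke Proposition~\ref{principle2} at each branched vertex to shrink the forward component, and terminate by finiteness---the paper casts this as an induction on the number of vertices in $X$, with base case the bare interval $(x_1,x_j)$. The free-vertex endpoint you single out as the crux is not isolated in the paper's proof; your continuity squeeze and the paper's implicit appeal to Proposition~\ref{principle2} (which at a degree-$1$ vertex leaves no admissible $Y_j$, $j\geq 2$) reach the same contradiction.
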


\begin{proof}
We induct on the number of vertices $k$ in $X$. If $k = 0$, then $X$ is an interval, so we can merely take $p = x_1$.

Suppose we know the result for all $\ell < k$. Since $X \cup \{x_1\}$ remains connected when $x_1$ is removed, $x_1$ must have degree 1 in the closure of $X$, since any point which is not a free vertex disconnects a simply connected 1-complex when removed. Consider the Type II path $\al$ in $\Conf_n(G)$ such that $\al_1(0) = x_1$ and $\al_1$ moves along the unique edge in the closure of $X$ which is connected to $x_1$. Let $v = \al_1(1)$, and let $X'$ be the connected component of $G \sm \al(1)$ containing $f(\al(1))$. By Proposition \ref{principle2}, $X'$ borders $v$ and $x_1 \notin X'$. Therefore, $X' \cup \{v\}$ is simply connected, since it is a connected subset of a tree. Since $v \notin X'$ and $X' \sbs X$, $X'$ contains fewer than $k$ vertices, so we are done by the inductive hypothesis.
\end{proof}

Of course, the chasing lemma applies equally well when $x_1$ is replaced by any other token. The chasing lemma, while simple, is an important aspect of the proof of the main theorem. When we apply it (with $x_i$ taking the role of $x_1$) we say that we are ``chasing with $x_i$.''

The following corollary is worth noting separately:

\begin{cor}\label{treechasing}
If $f$ is an identifying function on $G$, and $T \sbs G$ is a subtree of $G$ which is connected to the rest of $G$ by a single point $p$, then for any configuration of the form $x = (p, x_2, \ldots, x_n)$ with $x_2, \ldots, x_n \notin T$, we have $f(x) \notin T$.
\end{cor}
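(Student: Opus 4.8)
The plan is to argue by contradiction and reduce to the Chasing Lemma (Lemma \ref{chasing}). Suppose $f(x) \in T$. Since $f$ is an identifying function, $f(x) \neq x_1 = p$, so in fact $f(x) \in T \sm \{p\}$. Let $X$ be the component of $G \sm x$ containing $f(x)$. The whole argument consists of verifying that we may chase with $x_1$ and then extracting a contradiction from the interval that chasing produces.

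First I would check the hypotheses of the Chasing Lemma, with $x_1 = p$ in the distinguished role. Because $x_2, \ldots, x_n \notin T$, the only token lying in $T$ is $x_1 = p$, which sits at the single point joining $T$ to the rest of $G$. Hence $(G \sm x) \cap T = T \sm \{p\}$, and since $p$ is a cut point of $G$ that has been deleted, the set $T \sm \{p\}$ is separated from the remainder of $G \sm x$. Thus $X$ is precisely the component of $T \sm \{p\}$ containing $f(x)$; in particular $X \sbs T \sm \{p\}$, and since deleting a point from a tree leaves each resulting piece adjacent to that point, $p \in \overline{X}$. Consequently $X \cup \{x_1\} = X \cup \{p\}$ is a connected subset of the tree $T$ and is therefore simply connected, which is exactly the hypothesis required to chase with $x_1$.

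Applying the Chasing Lemma then yields a point $p' \in X \sbs T \sm \{p\}$ and an index $2 \leq j \leq n$ such that $f(x') \in (x_j, p')$, where $x' = (p', x_2, \ldots, x_n)$. By the meaning of the interval notation, $x_j$ and $p'$ lie on a common edge $e$, and here $p' \in T \sm \{p\}$ while $x_j \notin T$. The corollary will follow once I show this configuration of points on a single edge is impossible.

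The one genuine subtlety—and the step I expect to be the main obstacle—is pinning down where $p$ sits relative to the cell structure, since a priori $p$ could lie in the interior of an edge and an edge could then straddle $T$ and its complement. I would dispose of this by first reducing to the case that $p$ is a vertex of $G$, subdividing the edge through $p$ if necessary; this alters neither $G$ nor $\Conf_n(G)$ nor $f$, and the machinery of Section 4 is insensitive to degree-$2$ vertices (as already exploited in Proposition \ref{principle1}(iv)), so the Chasing Lemma remains valid. With $p$ a vertex, any edge $e$ meeting $T \sm \{p\}$ must lie entirely in $T$: because $T$ meets $\overline{G \sm T}$ only at $p$, the intersection $e \cap T$ is clopen in the connected set $e \sm \{p\}$ (which is connected since $p$, if it lies on $e$, is an endpoint), and it contains a point of $T \sm \{p\}$, so it is all of $e \sm \{p\}$, giving $e \sbs T$. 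But then $x_j \in e \sbs T$, contradicting $x_j \notin T$. This contradiction forces $f(x) \notin T$.
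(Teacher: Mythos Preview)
Your argument is correct and follows essentially the same route as the paper: assume $f(x)\in T$, invoke the Chasing Lemma with $x_1=p$, and derive a contradiction from the fact that the resulting interval $(x_j,p')$ would have to straddle $T$ and its complement while $\partial T=\{p\}$. You are more explicit than the paper in checking the simple-connectivity hypothesis and in handling the possibility that $p$ lies in the interior of an edge (via subdivision), whereas the paper additionally disposes of the borderline case $p'=p$; both treatments arrive at the same contradiction.
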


\begin{proof}
Suppose that $x = (p, x_2, \ldots, x_n)$ is such that $x_2, \ldots, x_n \notin T$, and $f(x) \in T$. Then, applying Lemma \ref{chasing}, there is some $i = 2, \ldots, n$ and $p' \in T$ such that if $x' = (p', x_2, \ldots, x_n)$, then $f(x') \in (p', x_i)$. If $p' \neq p$, then this is a contradiction since $x_i \notin T$, and if there is any interval where exactly one endpoint is in $T$, then that endpoint must be in $\pa T$, and therefore be equal to $p$. However, if $p' = p$, then $x' = x$, and we have $f(x) \notin T$, which is a contradiction, so we are done.
\end{proof}

We can now prove the second half of Theorem \ref{treesthm}.

\begin{prop}\label{trees2} Suppose $\chi(G) = 1$ and $n = 1$. Then $G$ does not admit a section.
\end{prop}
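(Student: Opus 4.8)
The plan is to reinterpret the statement in the language of fixed points. When $n = 1$ we have $\Conf_1(G) = G$, and an identifying function is exactly a continuous map $f \colon G \ra G$ with $f(x) \neq x$ for every $x$, i.e.\ a \emph{fixed-point-free} self-map of $G$. Since $\chi(G) = 1$ forces $G$ to be a tree (as noted in Proposition \ref{treesprop1}), the proposition is equivalent to the assertion that every continuous self-map of a finite tree has a fixed point. I would therefore aim to prove this \emph{fixed-point property} for trees directly, in keeping with the paper's elementary and geometric methods, rather than invoking the Lefschetz fixed-point theorem (which does apply, since a tree is contractible and hence has Lefschetz number $1$).

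To prove the fixed-point property, I would induct on the number of edges of the tree $T = G$. For the base case I take $T$ homeomorphic to a closed interval (in particular a single point or a single edge): here a fixed point of any continuous $f \colon T \ra T$ is produced by the intermediate value theorem applied to the signed displacement along the interval. For the inductive step, assume $T$ is not an interval, so it contains a branched vertex $c$. Write the components of $T \sm \{c\}$ as $C_1, \ldots, C_m$ with $m \geq 3$, and set $\ol{C_i} = C_i \cup \{c\}$; each $\ol{C_i}$ is a subtree with strictly fewer edges than $T$. The key device is the nearest-point retraction $r_i \colon T \ra \ol{C_i}$, which is the identity on $\ol{C_i}$ and collapses every other branch to $c$; because $T$ is uniquely geodesic this projection is well defined and continuous (indeed $1$-Lipschitz).

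Next I would apply the inductive hypothesis to the self-map $r_i \circ f$ of the smaller tree $\ol{C_i}$, obtaining a fixed point $x_i^*$. There are two cases. If $f(x_i^*) \in \ol{C_i}$, then $r_i$ fixes $f(x_i^*)$, so $x_i^* = r_i(f(x_i^*)) = f(x_i^*)$ is an honest fixed point of $f$ and we are done. Otherwise $f(x_i^*) \notin \ol{C_i}$, which forces $r_i(f(x_i^*)) = c$ and hence $x_i^* = c$. If this second case occurs for every $i$, then $r_i(f(c)) = c$ for all $i$; but $f(c)$ lies in some $\ol{C_{i_0}}$, and if $f(c) \neq c$ then $r_{i_0}(f(c)) = f(c) \neq c$, a contradiction, while $f(c) = c$ is itself a fixed point. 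Either way $f$ has a fixed point, contradicting the existence of the fixed-point-free identifying function.

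The main obstacle is the inductive step, and specifically the fact that $f$ need not carry any branch $\ol{C_i}$ into itself; this is exactly what blocks a naive restriction argument and is the reason for introducing the retractions $r_i$. The two technical points requiring care are the continuity of the nearest-point retraction onto a subtree (which rests on the uniqueness of geodesics in a tree) and the bookkeeping in the final case analysis at the cut point $c$. Everything else should be routine.
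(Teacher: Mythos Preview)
Your argument is correct: reinterpreting an identifying function as a fixed-point-free self-map of the tree and then proving the tree fixed-point property by induction on the number of edges, using nearest-point retractions onto the closed branches at a branched vertex, works exactly as you describe. The case analysis at $c$ is clean, and the inductive hypothesis applies because each $\ol{C_i}$ has strictly fewer edges (indeed $c$ has degree $1$ in $\ol{C_i}$ since a tree splits into one component per incident edge).

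However, this is a genuinely different route from the paper's. The paper does not argue by induction on the tree; instead it invokes Corollary~\ref{treechasing}, which in turn rests on the chasing Lemma~\ref{chasing} and the connectable-components machinery of Section~4. Given that machinery, the proof is two lines: pick any $p$, and Corollary~\ref{treechasing} forbids $f(p)$ from lying in any of the subtrees hanging off $p$. Your approach is more elementary and entirely self-contained---it needs nothing from Section~4 and is essentially the classical proof that finite trees have the fixed-point property. The paper's approach, by contrast, is chosen to illustrate that the chasing framework (built primarily for the harder $\chi(G)<0$ cases of Theorem~\ref{mainthm}) already subsumes this result; it is shorter \emph{in situ} but depends on heavier infrastructure. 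Both are valid, and your version would stand on its own even if Section~4 were removed.
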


\begin{proof}
Suppose $f:G \ra G$ is an identifying function. Given any point $p \in G$, $p$ separates $G$ into subtrees $T_1, \ldots, T_k$. However, by Corollary \ref{treechasing}, $f(p)$ can be contained in none of them, which is a contradiction.
\end{proof}

Note that Proposition \ref{trees2} is a generalization of the 1-dimensional Brouwer fixed-point theorem, as it says that for any tree $T$ (not just the interval $I$), any continuous map $T \ra T$ has a fixed point.

\begin{rmk}
This discussion gives an algebraic necessity for the existence of a section. Let $PB_n(G) = \pi_1(\Conf_n(G))$ denote the \textit{$n$'th pure braid group} of $G$. Suppose that $X_x$ is a consistent system of components on $\Conf_n(G)$. Let $H < PB_{n+1}(G)$ be the set of homotopy classes of paths which can be represented by a path $\al = (\al_1, \ldots, \al_{n+1})$ such that $\al_{n+1}(t) \in X_{(\al_1(t), \ldots, \al_n(t)))}$ for all $t \in I$. One can verify that $H$ is a subgroup of $PB_{n+1}(G)$. Then, if $s: \Conf_n(G) \ra \Conf_{n+1}(G)$ is a section whose corresponding system of components (in the sense of Proposition \ref{idfnlcs}) is $X_x$, then the image of the induced map $s_*: PB_n(G) \ra PB_{n+1}(G)$ must lie in $H$, since for any based loop $\be$ in $\Conf_n(G)$, the composition $s \circ \be$ represents an element of $H$. Therefore, if a section exists then there exists some consistent choice of components $X_x$ such that the restricted map:
\[H \hra  PB_{n+1}(G) \ra PB_n(G)\]
admits a section.
\end{rmk}

\section{Proof of Theorem \ref{mainthm}}

In this section, we prove Theorem \ref{mainthm} and some necessary lemmas. 

\subsection{Distinguished Pairs}
First we define and discuss the basic properties of distinguished pairs, which occur when, for a configuration $x$ and identifying function $f$, $f(x)$ lies between two tokens of $x$. We will show that the existence of such pairs allow us to draw broad conclusions about the existence of sections, culminating in the proof of Theorem \ref{mainthm}. Recall the open interval $(p, q)$ notation introduced at the start of Section \ref{basicexamples}.

\begin{defin}\label{distpair}
Suppose $G \not\cong S^1$, and $e$ is an oriented edge such that $G$ remains connected when a single interior point of $e$ is removed, and suppose that $f$ is an identifying function on $G$. If $i, j$ are a pair of distinct indices, we say that $(i, j)$ forms a \textit{distinguished pair (of $f$) on $e$} if there exists a configuration $x = (x_1, \dots, x_n)$ such that:
\begin{enumerate}[(a)]
    \item $x_i, x_j \in e$,
    \item $x_i$ lies before $x_j$ on $e$ with respect to the orientation on $e$,
    \item $x_k \notin (x_i, x_j)$ for all $k$,
    \item $f(x) \in (x_i, x_j)$.
\end{enumerate}

We then say that the configuration $x$ is a \textit{witness} to the distinguished pair $(i, j)$.
\end{defin}

\begin{figure}[h]
    \labellist
    \small\hair 2pt
    \pinlabel $x_i$ at 10 30
    \pinlabel $f(x)$ at 150 40
    \pinlabel $x_j$ at 300 30
    \endlabellist
    \centering
    \includegraphics[scale = .7]{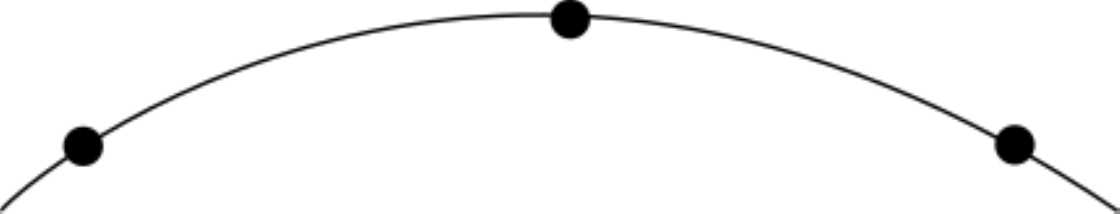}
    \caption{A witness to a distinguished pair $(i, j)$}
\end{figure}

Note that if $f$ is an identifying function and $x = (x_1, \ldots, x_n)$ is a configuration which satisfies the hypotheses of Lemma \ref{chasing} with respect to $f$, and additionally $G \sm \{x_\ell\}$ is connected for each $\ell \geq 2$, then the conclusion of Lemma \ref{chasing} in fact produces a distinguished pair containing $1$.

Next we state and prove the most important fact about distinguished pairs.

\begin{prop}\label{distpairprop}
If $(i, j)$ is a distinguished pair on $e$, and $y = (y_1, \ldots, y_n)$ is a configuration which satisfies the properties (a)-(c) in Definition \ref{distpair} with respect to $(i, j)$ for $f$, then $y$ also satisfies property (d), that is, it is a witness to $(i, j)$.
\end{prop}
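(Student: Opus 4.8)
The plan is to prove a path-invariance statement for $f$ and then exhibit a path from the witness $x$ to the given $y$ to which it applies.

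First I would record the basic observation behind (a)--(c): the open interval $(x_i, x_j) \sbs e$ is itself a connected component of $G \sm x$, since its two boundary points are the tokens $x_i, x_j$ and (c) says no other token lies inside it. More generally, if $\al$ is a path in $\Conf_n(G)$ along which (a)--(c) hold at every time $t$, then $M_t := (\al_i(t), \al_j(t))$ is a component of $G \sm \al(t)$ for each $t$, and because $\al_i, \al_j$ vary continuously the assignment $t \mapsto M_t$ is a locally consistent system of components along $\al$. I would then show that along such an $\al$ the component carrying $f$ never escapes $M_t$: let $X_t$ be the locally consistent system with $f(\al(t)) \in X_t$ furnished by Proposition \ref{idfnlcs}, and assume $X_0 = M_0$. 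Setting $s = \inf\{t : X_t \neq M_t\}$ and running the infimum argument of Proposition \ref{typeiuniqueness} and Lemma \ref{growingcomponents} --- using that $M_s$ is a component whose only boundary points are the tokens $\al_i(s), \al_j(s)$, so that any point of $X_s$ inherited from nearby $M_r$ must again lie strictly between them --- forces $X_t = M_t$ for all $t$. Hence if $f(\al(0)) \in M_0$, then $f(\al(1)) \in M_1$.

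Granting this invariance, it suffices to produce a single path $\al$ from the witness $x$ to the given $y$ that stays inside the set $\mathcal{D}$ of configurations satisfying (a)--(c); applying the invariance and the witness property $f(x) \in (x_i, x_j)$ then yields $f(y) \in (y_i, y_j)$, which is exactly (d). I would build $\al$ in three stages. Holding tokens $i, j$ fixed, first move each of the remaining $n-2$ tokens to fixed reference positions lying off the edge $e$, along paths in $G \sm (x_i, x_j)$, which is connected since the hypothesis on $e$ means $e$ is non-separating. Next, slide the pair $(\al_i, \al_j)$ along $e$ from $(x_i, x_j)$ to $(y_i, y_j)$ (for instance by a convex combination in edge coordinates, which preserves the order ``$i$ before $j$'' and keeps both interior to $e$), with the other tokens parked off $e$. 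Finally, reverse the analogue of the first stage for $y$. At no point does a token enter the moving interval between $i$ and $j$, so (a)--(c) hold throughout $\al$.

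The step I expect to be the main obstacle is arranging the first and third stages so that the other tokens of $x$ and of $y$ can be brought to the \emph{same} reference configuration without entering $(x_i, x_j)$ (resp.\ $(y_i, y_j)$) and without colliding. This rests on two facts extracted from the standing hypotheses. Since $G \not\cong S^1$ and $e$ is non-separating, $G$ cannot be an interval or a circle, so it contains a branched vertex; this vertex lies off the small interval $(x_i, x_j)$ and is therefore available to the remaining tokens, which can use it to be permuted into any desired order. And because $e$ is non-separating there is an alternate route between its endpoints, so the remaining tokens can be transported through $G$ without ever crossing tokens $i$ or $j$ along $e$. I would verify these maneuvers carefully; once the path $\al$ inside $\mathcal{D}$ exists, the conclusion is immediate from the invariance of the preceding paragraph.
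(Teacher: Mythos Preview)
Your proposal is correct and follows the same strategy as the paper: build a path in $\Conf_n(G)$ from the witness $x$ to $y$ along which conditions (a)--(c) persist, and track the interval component $(\al_i(t),\al_j(t))$ containing $f$ along it. The paper differs only in packaging --- it decomposes the path into basic pieces and invokes Propositions~\ref{principle1} and~\ref{principle2} in place of your direct interval-invariance argument, and it handles the connectivity step you flag as the main obstacle by citing Theorem~2.7 of \cite{MR2701024} for the connectedness of $\Conf_{n-2}(G \sm [y_i,y_j])$ rather than arguing it by hand.
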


\begin{proof}
Let $x = (x_1, \ldots, x_n)$ to $(i, j)$ be a witness to $(i, j)$, and let $y = (y_1, \ldots, y_2)$ be as in the proposition statement. Our goal is to construct a path $\al = (\al_1, \ldots, \al_n)$ in $\Conf_n(G)$ connecting $x, y$ such that $\al_i, \al_j$ never leave $e$, which is a concatenation of basic paths. This will imply by Propositions \ref{principle1} and \ref{principle2} that $f(y) \in (y_i, y_j)$.

First consider the case where none of the $x_\ell$ with $\ell \neq i, j$ lie in $e$, and $y_i, y_j$ lie in the interior of $e$. Then, we can take $\al$ to first move the $i$'th and $j$'th tokens to $y_i, y_j$ respectively, and then move each of the remaining $n-2$ tokens of $x$ to the corresponding tokens of $y$. Since $\Conf_{n-2}(G \sm [x_i, x_j])$ is connected by Theorem 2.7 of \cite{MR2701024}, this is possible.

If (say) $y_i$ is at an endpoint of $e$, then let $y_i'$ be a point in $e$ which is very close to $y_i$. We can modify the path $\al$ above by initially moving $x_i$ to $y_i'$, and then moving $y_i'$ to $y_i$ once the remaining $n-2$ tokens agree with $y$.

If some of the $x_\ell$ lie in $e$ for $\ell \neq i, j$, then we can first apply a path which moves each of these $x_\ell$ outside of $e$, and then reduce to the previous case.

All these paths can be taken to be concatenations of basic paths, so we are done.
\end{proof}

Proposition \ref{distpairprop} gives rise to the very useful observation:

\begin{cor}\label{distpaircor1}
If $(i, j)$ is a distinguished pair on $e$ and $(k, \ell)$ is a distinguished pair on $e'$, then $\{i, j\} \cap \{k, \ell\} \neq \es$
\end{cor}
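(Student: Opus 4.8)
The plan is to argue by contradiction, assuming $\{i,j\} \cap \{k,\ell\} = \es$, so that $i,j,k,\ell$ are four distinct indices and in particular $n \geq 4$. The engine of the proof will be Proposition \ref{distpairprop}: a distinguished pair is ``detected'' by \emph{any} configuration meeting conditions (a)--(c), not just by a single witness. So my goal is to produce one configuration $x$ that simultaneously satisfies (a)--(c) for $(i,j)$ on $e$ and for $(k,\ell)$ on $e'$, and whose two associated open intervals $(x_i, x_j)$ and $(x_k, x_\ell)$ are disjoint. Proposition \ref{distpairprop} will then force $f(x)$ into both intervals at once, which is the contradiction I want.

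To build $x$, I would first place $x_i, x_j$ in the interior of $e$ with $x_i$ before $x_j$ with respect to the orientation, keeping the interval $(x_i,x_j)$ short, and similarly place $x_k, x_\ell$ in $e'$. If $e \neq e'$, the two open intervals lie on distinct edges and are automatically disjoint. If $e = e'$, I instead choose the four points so that, along the orientation of $e$, they occur in the order $x_i, x_j, x_k, x_\ell$; then $(x_i,x_j)$ and $(x_k,x_\ell)$ are disjoint subintervals of $e$. In either case the remaining $n-4$ tokens can be placed at arbitrary distinct points of $G$ avoiding the two closed intervals $[x_i,x_j]$ and $[x_k,x_\ell]$ and the four chosen points; there is always room, since deleting finitely many points and two short closed intervals from a graph leaves infinitely many points.

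By construction, no token other than the relevant endpoints lies in $(x_i, x_j)$, so $x$ satisfies (a)--(c) with respect to $(i,j)$ on $e$, and likewise it satisfies (a)--(c) with respect to $(k,\ell)$ on $e'$. Applying Proposition \ref{distpairprop} to each distinguished pair yields $f(x) \in (x_i, x_j)$ and $f(x) \in (x_k, x_\ell)$ simultaneously. Since these open intervals are disjoint, this is a contradiction, and the proof is complete.

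I expect the only real work to be the construction in the second paragraph, and in particular the case $e = e'$, where one must check that four points can be arranged in the required order with disjoint open intervals, along with the bookkeeping needed to guarantee that the leftover tokens yield a legitimate element of $\Conf_n(G)$. Everything else reduces to a direct appeal to Proposition \ref{distpairprop}.
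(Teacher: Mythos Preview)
Your proposal is correct and follows exactly the paper's approach: assume disjointness, build a single configuration satisfying (a)--(c) for both pairs, and invoke Proposition \ref{distpairprop} twice to force $f(x)$ into two disjoint intervals. The only wrinkle to tidy up is the case where $e$ and $e'$ are the same underlying edge with opposite orientations (so $e \neq e'$ as oriented edges but the intervals are not ``on distinct edges''); your $e=e'$ argument handles this with the obvious adjustment to the ordering.
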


\begin{proof}
Suppose $(i, j)$, $(k, \ell)$, are distinguished pairs such that $\{i, j\} \cap \{k, \ell\} = \es$. Then, we can find a configuration $x = (x_1, \ldots, x_n)$ satisfying the hypotheses for Proposition (\ref{distpairprop}) with respect to both distinguished pairs. This necessitates $f(x) \in (x_i, x_j)$ and $f(x) \in (x_k, x_\ell)$, which is a contradiction.
\end{proof}

\subsection{The Case $n > 3$}

We now present the proof of the main theorem in the case $n > 3$:

Suppose $f$ is an identifying function, and consider the statements below:

\begin{enumerate}[(A)]
    \item Every index belongs to some distinguished pair of $f$.
    
    \item No index belongs to every distinguished pair of $f$.
\end{enumerate}

\begin{lemma}
If $n > 3$ and $f: \Conf_n(G) \ra G$ is an identifying function, then the statements (A), (B) are not simultaneously true of $f$.
\end{lemma}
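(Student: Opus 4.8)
The plan is to reduce the statement to a purely combinatorial fact about the family of index-pairs realized as distinguished pairs. Write $\mathcal{F}$ for the collection of all two-element subsets $\{i,j\} \subseteq \{1, \ldots, n\}$ such that $(i,j)$, in one order or the other, is a distinguished pair of $f$ on some edge. By Corollary \ref{distpaircor1}, any two members of $\mathcal{F}$ share a common index; that is, $\mathcal{F}$ is a pairwise-intersecting family of $2$-element sets, and the whole lemma will follow from the classification of such families.

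First I would establish that a pairwise-intersecting family of $2$-element sets is either a \emph{star}, meaning all its members contain one fixed index, or a \emph{triangle}, meaning it is contained in $\{\{a,b\},\{a,c\},\{b,c\}\}$ for three distinct indices $a,b,c$. The argument is a short case analysis. Suppose $\mathcal{F}$ is not a star and has at least two members. Any two distinct members are intersecting $2$-sets, hence share exactly one index (sharing both would make them equal), say $\{a,b\}$ and $\{a,c\}$ with $b \neq c$. Since $\mathcal{F}$ is not a star, not all members contain $a$, so some member $S$ avoids $a$; as $S$ must meet both $\{a,b\}$ and $\{a,c\}$, it contains $b$ and $c$, forcing $S = \{b,c\}$. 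Any further member must meet all three of $\{a,b\},\{a,c\},\{b,c\}$, and checking the two cases according to whether or not it contains $a$ forces it to equal one of these three sets. Thus $\mathcal{F}$ is a triangle.

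Now I would translate statements (A) and (B) into this language. Statement (B) asserts exactly that $\mathcal{F}$ is \emph{not} a star, so by the classification $\mathcal{F}$ is a triangle on some three indices $a,b,c$. But then every member of $\mathcal{F}$ is a subset of $\{a,b,c\}$, so no index outside $\{a,b,c\}$ lies in any distinguished pair of $f$. Since $n > 3$, there is an index $d \notin \{a,b,c\}$, and $d$ belongs to no distinguished pair, contradicting statement (A). Hence (A) and (B) cannot hold simultaneously.

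The content here is entirely combinatorial, and the only real work is the classification of pairwise-intersecting families of $2$-sets, which is routine; I expect no genuine obstacle. The care needed is in the degenerate possibilities: the case $\mathcal{F} = \es$ (where (A) already fails outright), and the boundary case $n = 3$, in which a triangle satisfies both (A) and (B) — this is precisely why the hypothesis $n > 3$ is indispensable.
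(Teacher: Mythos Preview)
Your proposal is correct and takes essentially the same approach as the paper: both reduce the lemma to the combinatorics of a pairwise-intersecting family of $2$-element index sets via Corollary~\ref{distpaircor1}. The paper carries out the case analysis directly (building up pairs $\{1,2\},\{1,3\},\{1,4\}$ and then contradicting (B)), while you package the same argument as the star/triangle classification; the content is identical.
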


\begin{proof}

Suppose that statements (A), (B) are both true, and suppose without loss of generality that $(1, 2)$ is a distinguished pair. Then $3$ is contained in some distinguished pair, $(i, 3)$. If $i \neq 1, 2$, then this contradicts Corollary \ref{distpaircor1}, so suppose $i = 1$. Then, $(1, 4)$ must be a distinguished pair, since if $4$ is in any other distinguished pair, we can produce a counterexample to Corollary \ref{distpaircor1}. By (B), there is some distinguished pair $(i, j)$ with $i, j \neq 1$. This contradicts Corollary \ref{distpaircor1} applied to either the pairs $(i, j)$ and $(1, 2)$, the pairs $(i, j)$ and $(1, 3)$, or the pairs $(i, j)$ and $(1, 4)$, depending on $i, j$.
\end{proof}

It therefore suffices to establish that (A) and (B) hold when $n \geq 2 - \chi(G)$ and $\chi(G) < 0$.

\begin{proof}[Proof of (A) when $n \geq 2 - \chi(G)$]

Let $T$ be a maximal subtree of $G$, let $k = 1 - \chi(G)$, and let $e_1, e_2, \ldots, e_k$ be the edges in the complement of $T$. Let $A_1, A_2$ be closed intervals contained in the interiors of $e_1, e_2$ respectively, and let
\[\ga: I \ra T \cup e_1 \cup e_2\]
be an embedded path connecting an endpoint of $A_1$ to an endpoint of $A_2$, such that the intersection of the image of $\ga$ with $A_1 \cup A_2$ is exactly these two endpoints. We can assume that $T$ is not contained in the image of $\ga$ (if not, enlarge $T$ slightly so that it is not homeomorphic to an interval). The situation is summarized in Figure \ref{fig:exampleA}.

\begin{figure}[h]
    \labellist
    \small\hair 2pt
    \pinlabel $e_1$ at 230 30
    \pinlabel $e_2$ at 260 140
    \pinlabel $e_3$ at 230 240
    \pinlabel $e_4$ at 150 245
    \pinlabel $e_5$ at 30 250
    \pinlabel $e_6$ at 70 150
    \pinlabel $\ga$ at 310 30
    \pinlabel $A_1$ at 240 -10
    \pinlabel $A_2$ at 310 110
    \pinlabel $T$ at 170 50
    \endlabellist
    \centering
    \includegraphics[scale = .6]{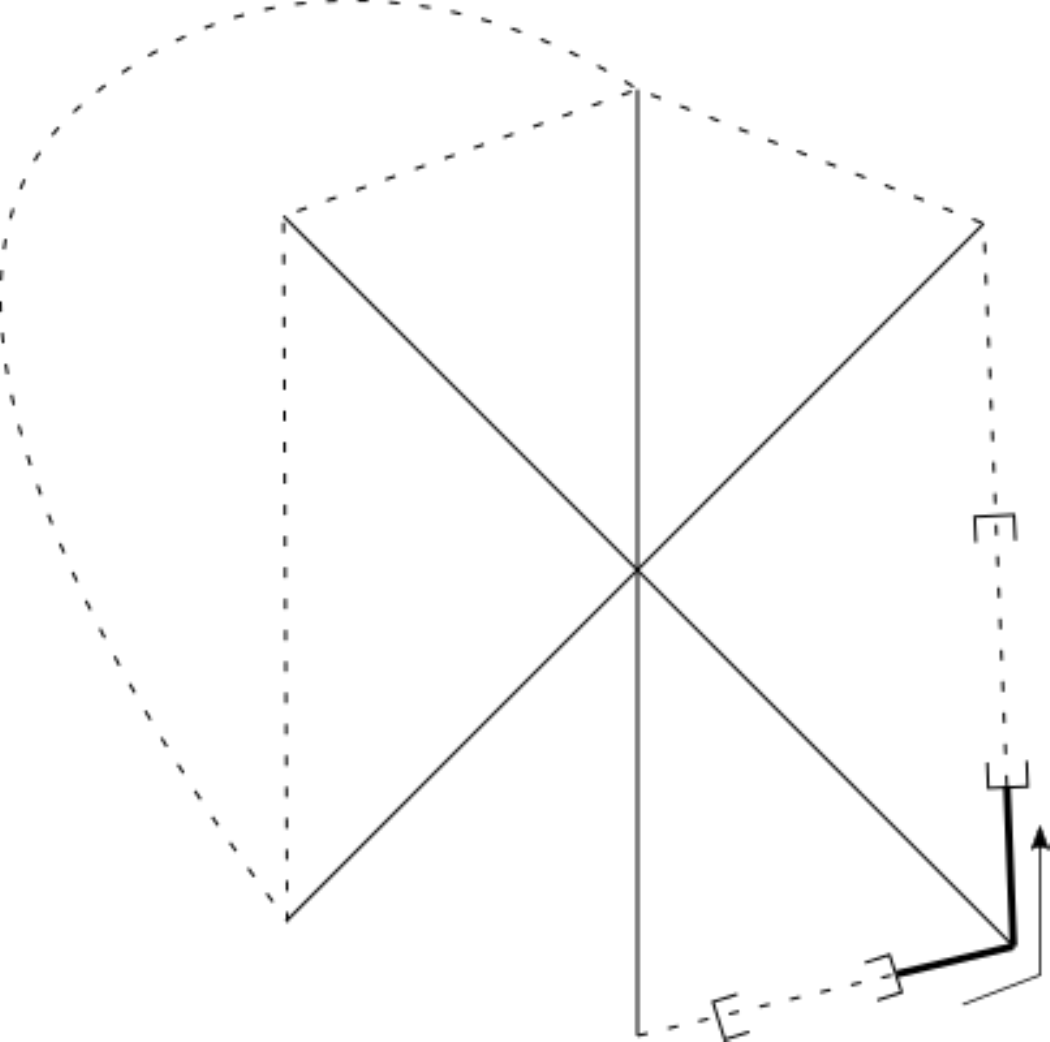}
    \caption{A graph $G$ with labelled edges, with labelled $A_1$, $A_2$ and $\ga$.}
    \label{fig:exampleA}
\end{figure}

We will prove that $1$ is contained in some distinguished pair. Let $x_1 \in G$ be a point in $T$ not contained in the image of $\ga$. Let $x_2, \ldots, x_{k-1}$ be points in the interiors of $e_3, \ldots, e_k$ respectively. Let $x_k, \ldots, x_{n-1}$ be points spaced out equally in $A_1$ in order such that $x_{n-1}$ is on the endpoint contained in the image of $\ga$, and let $x_n$ be the endpoint of $A_2$ not contained in the image of $\ga$. Let $x = (x_1, \ldots, x_n)$. The configuration $x$ is pictured in Figure \ref{fig:labelledA}.

\begin{figure}[h]
    \labellist
    \small \hair 2pt
    \pinlabel $x_1$ at 230 150
    \pinlabel $x_2$ at 240 310
    \pinlabel $x_3$ at 150 320
    \pinlabel $x_4$ at 20 300
    \pinlabel $x_5$ at 70 220
    \pinlabel $x_6$ at 200 20
    \pinlabel $x_7$ at 220 30
    \pinlabel $x_{n-1}$ at 340 40
    \pinlabel $x_n$ at 380 270
    \endlabellist
    \centering
    \includegraphics[scale = .7]{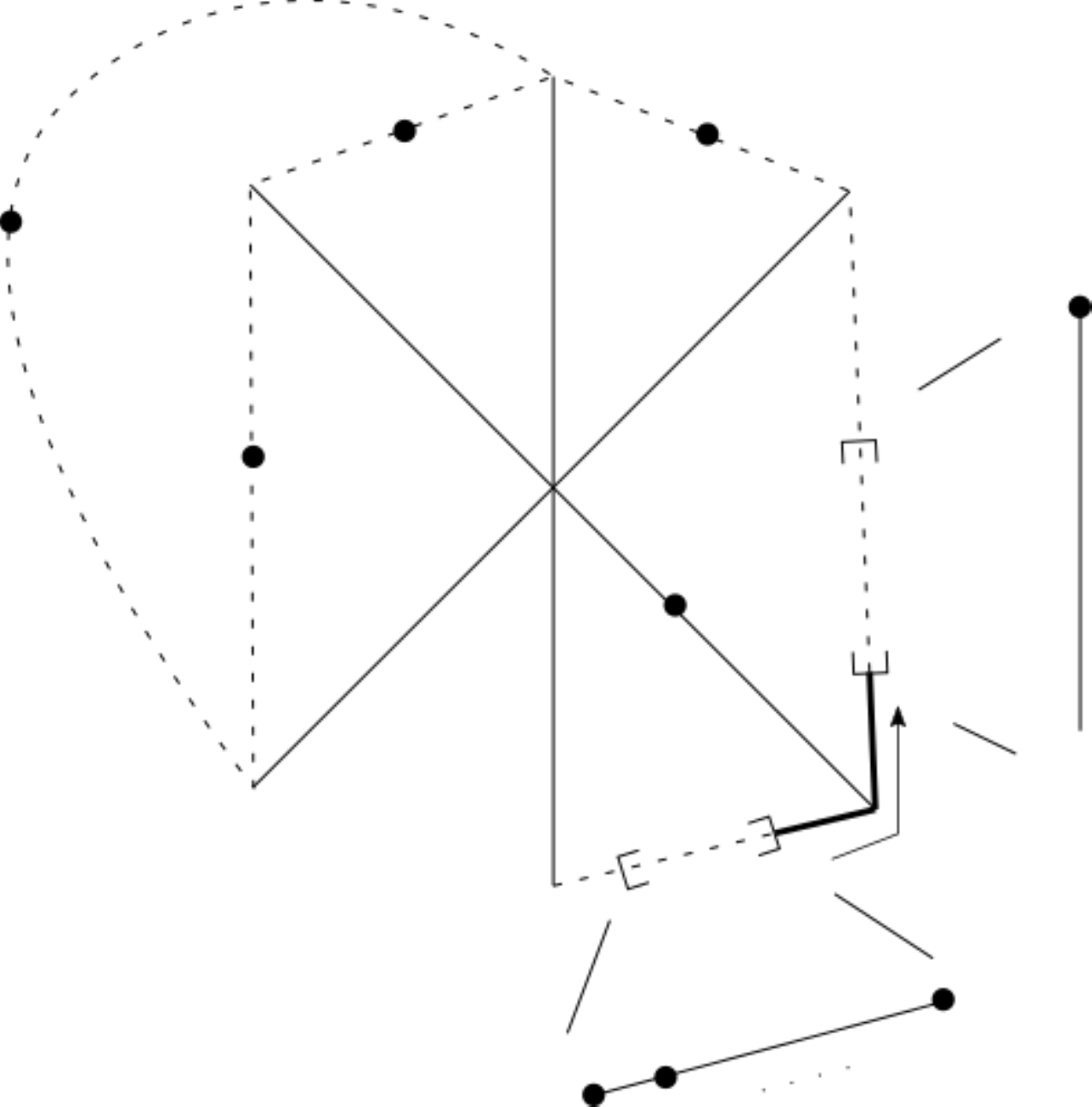}
    \caption{The configuration $x$}
    \label{fig:labelledA}
\end{figure}

The connected components of $(G \sm x) \cup \{x_1\}$ are the intervals $(x_k, x_{k+1}), \ldots, (x_{n-2}, x_{n-1})$, and a large simply connected component $X$ which contains $x_1$. If $f(x) \in X$, then let $\til{X}$ be the component of $X \sm \{x_1\}$ containing $f(x)$. Note that $\til{X} \cup \{x_1\}$ is simply connected, so we can apply Lemma \ref{chasing} to produce a distinguished pair containing 1, as desired.
 
Otherwise, suppose that $f(x) \in (x_{n-2}, x_{n-1})$ (the cases when $f(x)$ lies in a different of these intervals is similar, by renumbering the indices and using Proposition \ref{distpairprop}). Let $\al$ be the path in $\Conf_n(G)$ which moves $x_{n-1}$ along $\ga$, and fixes each of the other tokens. Call the ending configuration
\[x' = (x_1, \ldots, x_{n-2}, \ga(1), x_n),\]
and let $X'$ be the big simply connected component in $(G \sm x') \cup \{x_1\}$. For each $t$, let $X_t$ be the component of $G \sm \be(t)$ which contains $(x_{n-2}, x_{n-1})$. Note that $X_t$ is a system of components which satisfies the hypotheses of Lemma \ref{growingcomponents}, which implies that $f(x') \in X_1 = X'$. As before, we can apply Lemma \ref{chasing} to $x'$ to produce a distinguished pair containing $1$.
\end{proof}

\begin{proof}[Proof of (B) when $n \geq 2 - \chi(G)$]

Suppose for sake of contradiction that $1$ is in every distinguished pair, and that $(1, 2)$ is a distinguished pair in $e_1$ (a distinguished pair exists by (A)). Recall that this equips $e_1$ with an specified orientation. Let $T$ be a maximal subtree not containing $e_1$, and let $e_2$ be some other edge in $G \sm T$. Suppose further that $T$ ``peeks in'' to each edge, such that each point in $\pa T$ is contained in the closure of exactly one edge in $G \sm T$. Figure \ref{fig:maxtree} show an example of one of these constructions.

\begin{figure}[h]
    \labellist
    \small\hair 2pt
    \pinlabel $e_1$ at -20 70
    \pinlabel $e_2$ at 170 80
    \pinlabel $T$ at 50 50
    \endlabellist
    \centering
    \includegraphics[scale = .6]{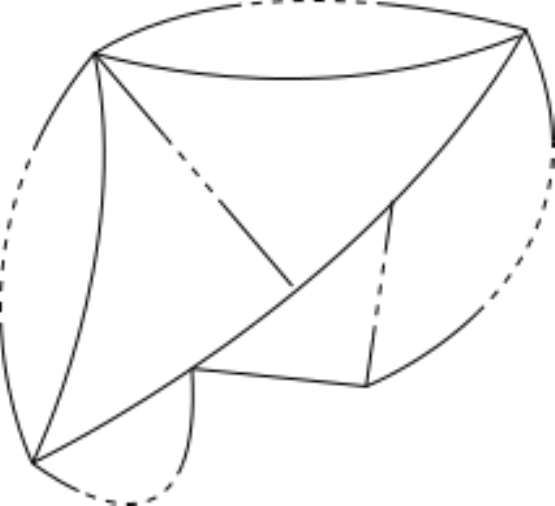}    
    \caption{}
    \label{fig:maxtree}
\end{figure}

Let $e_3, \ldots, e_k$ be the other edges not contained in $T$, where $k = 1 - \chi(G)$, as before. Let $H$ be the subgraph formed from $T \cup e_1 \cup e_2$ by removing free edges until every vertex has degree at least $3$. Since $H$ is a deformation retract of $T \cup e_1 \cup e_2$, we can see that $\chi(H) = -1$, and $H$ has no free edges. Therefore $H$ is homeomorphic to either the figure eight graph $(\infty)$, the theta graph $(\Th)$, or the dumbbell graph $D$. The three possibilities are pictured in Figure \ref{fig:subgraphH}.

\begin{figure}
    \labellist
    \small\hair 2pt
    \pinlabel $e_1$ at -10 270
    \pinlabel $e_2$ at 180 270
    \pinlabel $e_1$ at 235 270
    \pinlabel $e_2$ at 505 270
    \pinlabel $e_1$ at 115 70
    \pinlabel $e_2$ at 355 70
    \endlabellist
    \centering
    \includegraphics[scale = .8]{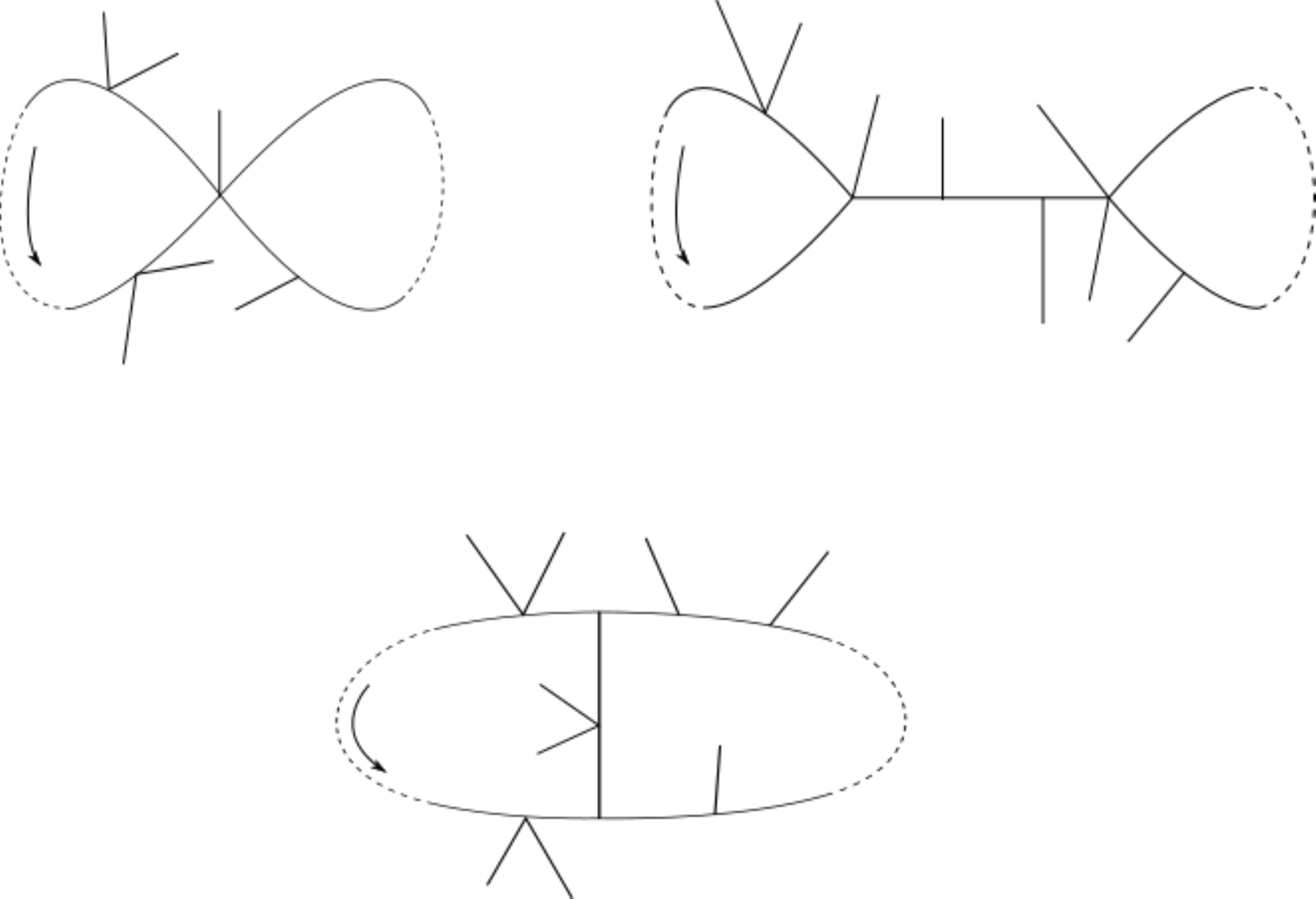}    \caption{Neighborhoods of the possible subgraphs $H$}
    \label{fig:subgraphH}
\end{figure}

Define a configuration $x = (x_1, \ldots, x_n)$ as follows: Let $x_3, x_4, x_5, \ldots, x_{k+1}$ be points in the interior of $e_2, e_3, \ldots, e_k$, respectively. In the interior of $e_1$, place $x_2, x_{k+2}, \ldots, x_n$ equally spaced apart in $e_1$, such that $x_2$ is farthest back on $e_1$ with respect to the orientation on $e_1$, and let $A = [x_2, x_n]$. Finally, place $x_1$ on a branched vertex in $T \cap H$ as follows:
\begin{enumerate}[(i)]
    \item When $H = \infty$, then $x_1$ is the unique branched vertex of $H$.
    \item When $H = \Th$, then $x_1$ is the branched vertex of $H$ closer to the back end of $e_1$, such that when $x_1$ slides into $e_1$, we obtain a witness to $(1, 2)$.
    \item When $H = D$, then $x_1$ is the branched vertex of $H$ lying closer to $e_2$
\end{enumerate}

Further, let $B = G \sm (x \cup A)$, and label the components of $G \sm B$ by $X, Y, Z$ (and $X'$ when $H = \infty$) as indicated in Figure \ref{fig:labelledH}.

\begin{figure}
    \labellist
    \small \hair 2pt
    \pinlabel $A$ at -5 245
    \pinlabel $x_2$ at -10 270
    \pinlabel $x_{n}$ at 10 210
    \pinlabel $x_1$ at 75 240
    \pinlabel $X$ at 55 330
    \pinlabel $X'$ at 55 175
    \pinlabel $x_3$ at 155 245
    \pinlabel $Y$ at 105 320
    \pinlabel $Z$ at 100 180
    \pinlabel $A$ at 195 245
    \pinlabel $x_2$ at 190 270
    \pinlabel $x_{n}$ at 205 210
    \pinlabel $x_1$ at 360 250
    \pinlabel $x_3$ at 420 250
    \pinlabel $X$ at 290 325
    \pinlabel $Y$ at 375 325
    \pinlabel $Z$ at 385 175
    \pinlabel $A$ at 100 95
    \pinlabel $x_2$ at 120 125
    \pinlabel $x_{n}$ at 115 60
    \pinlabel $x_1$ at 195 135
    \pinlabel $x_3$ at 295 95
    \pinlabel $X$ at 155 180
    \pinlabel $Y$ at 270 180
    \pinlabel $Z$ at 200 0
    \endlabellist
    \centering
    \includegraphics[scale = .9]{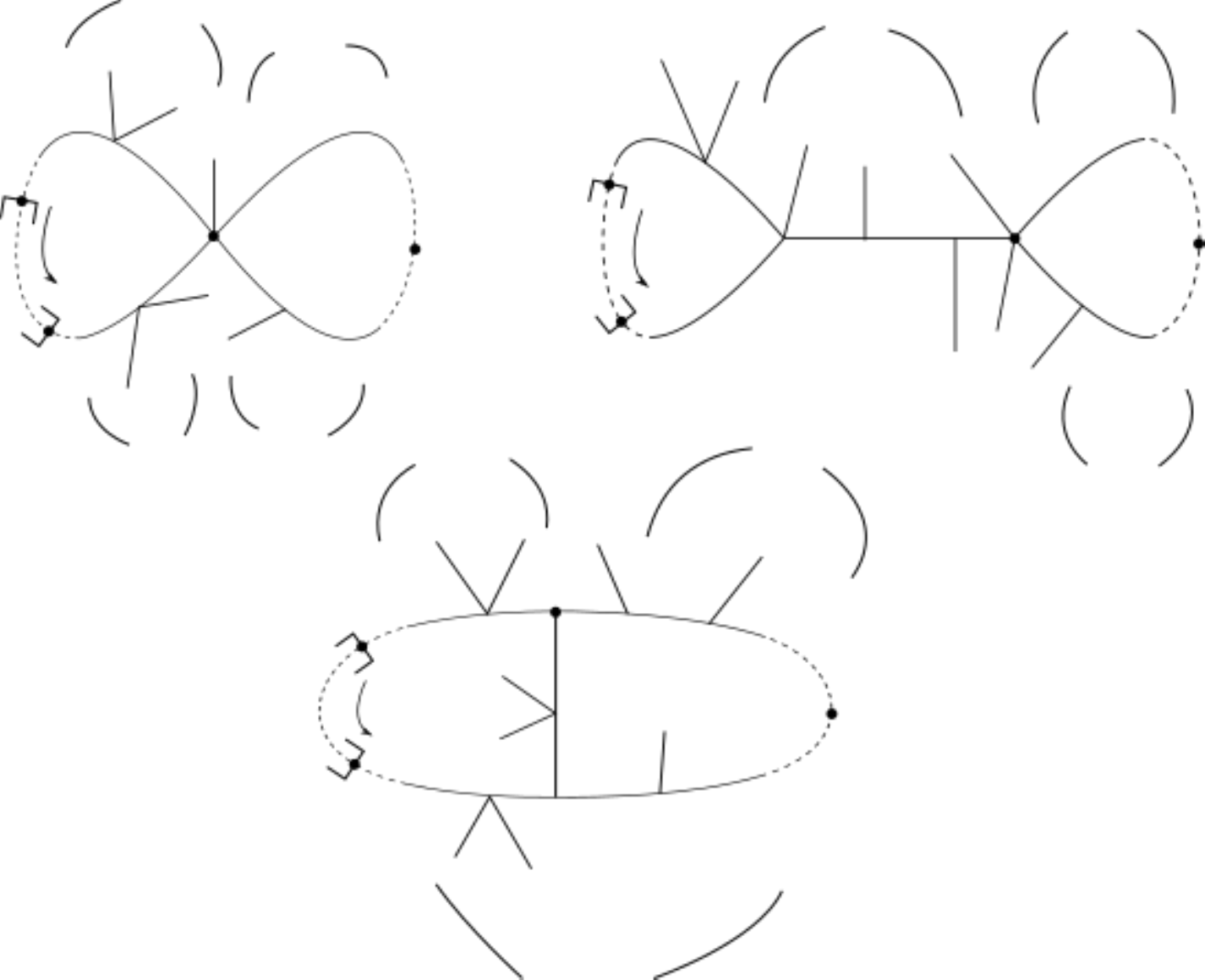}
    \caption{The components $X, Y, Z$, (and $X'$) of $G \sm B$.}
    \label{fig:labelledH}
\end{figure}

When we slide $x_1$ into the back of $e_1$, we obtain a witness to $(1, 2)$ by Proposition \ref{distpairprop}, and the reverse path gives a locally system of components by Lemma \ref{growingcomponents}, which implies exactly that $f(x) \in X$.

Let $x' = (x_3, x_2, x_1, x_4, \ldots, x_n)$ be the configuration which is $x$ except with $x_1$ and $x_3$ swapped. We will argue that $f(x')$ cannot lie in any component of $B$. If $f(x') \in X$ (or $X'$), then applying Lemma \ref{chasing} with $x_3$ produces a distinguished pair which does not contain $1$. Therefore we must have $f(x') \in Y$ or $f(x') \in Z$. 

We will define two paths $\be = (\be_1, \ldots, \be_n)$ and $\ga = (\ga_1, \ldots, \ga_n)$ in $\Conf_n(G)$ from $x$ to $x'$ which will allow us to see that neither case is possible. Take each of these paths to be concatenations of basic paths which fix each token except for the first and third, and such that $\be$ moves $x_1, x_3$ around the right loop in $H$ a half-turn \textit{clockwise}, and $\ga$ moves $x_1, x_3$ around the right loop in $H$ a half-turn \textit{counter-clockwise}\footnote{Here ``clockwise'' and ``counter-clockwise'' are with respect to the orientations implied by the figure}. 

Note that for each $t$, there are exactly two components of $G \sm \be(t)$ which border both $\be_1(t)$ and $\be_3(t)$. Since $\be_1, \be_3$ both move clockwise, $\be_1$, $\be_3$ are each moving ``towards'' a specific well-defined component at each $t$, and they likewise are each moving ``away'' from a specific well-defined component at each time. Further, for $t$ such that $\be_1(t) \neq x_1$, the path $\be_1$ is moving ``away'' from the component that $\be_3$ is moving ``towards.'' Our immediate goal is to show that $f(x') = f(\be(1))$ must lie in the component of $G \sm \be(1)$ which $\be_1$ is moving away from. We show this inductively on the basic pieces of $\be$.

Consider a Type I piece of $\be$. We can see that $f$ must remain in the component $\be_1$ is moving away from by Propositions \ref{typeiuniqueness} and \ref{principle1}.

Consider a Type II piece of $\be$ where only $\be_1$ moves to a vertex $v$. Since the component that $\be_1$ is moving away from is distinct from the components bordering $v$, we can deduce by Proposition \ref{principle2} that $f$ must remain in the component which $f$ is moving away from at the end of this basic piece.

Finally, consider a Type II piece of $\be$ where $\be_3$ moves, and suppose that $f$ takes values in a component of $G \sm \be$ which borders $\be_3$. Since we inductively assume that $f$ lies in the component of $G \sm \be$ which $\be_1$ is moving away from, $f$ must lie on the component of $G \sm \be$ that $\be_3$ is moving towards. Therefore, once $\be_3$ reaches the end of this Type II piece at some time $t$, $f$ must lie in some component $W$ of $G \sm \be(t)$ which borders $\be_3(t)$ but is not behind $\be_3$ at $t$ by Proposition \ref{principle2}. If $W$ does not border $\be_1(t)$, then by our construction of the configuration $x$, $W \cup \{\be_3(t)\}$ is simply connected, so we can apply Lemma \ref{chasing} to produce a distinguished pair which does not contain $1$. Therefore, $W$ must border $\be_1(t)$, so it must in fact be the component which $\be_3$ is moving towards, and therefore the component which $\be_1$ is moving away from.

We have shown that $f(x')$ is in the component which $\be_1$ is moving away from. Since $\be$ moves clockwise, we must have $f(x') \in Y$. However, if we apply the exact same argument with $\ga$, we get $f(x') \in Z$, which is a contradiction.
\end{proof}

\subsection{The Case $n = 3$}

When $n = 3$ and $\chi(G) = -1$, the statements (A) and (B) do not contradict Corollary \ref{distpaircor1}, so we must rely on a different technique. We first introduce a proposition which simplifies the proof.

\begin{prop}\label{defrettosbgrph}
Suppose $H \sbs G$ is a deformation retract of $G$, and suppose there exists an identifying function on $G$. Then there exists an identifying function on $H$.
\end{prop}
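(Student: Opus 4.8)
The plan is to take the given identifying function $f$ on $G$, restrict it to configurations supported in $H$ via the inclusion $\Conf_n(H) \hra \Conf_n(G)$, and push the output back into $H$ along a suitable retraction $r \colon G \ra H$. The content of the proof is to check that the resulting composite never lands on one of the tokens, and this will follow directly from Corollary \ref{treechasing}.

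First I would pin down the structure of the deformation retract. Since $\chi$ is a homotopy invariant we have $\chi(G) = \chi(H)$. Let $K_1, \ldots, K_m$ be the closures of the connected components of $G \sm H$; each $K_i$ is a subgraph meeting $H$ in a finite set of $c_i \geq 1$ points (at least one, since $G$ is connected). An Euler characteristic count, using additivity over the subcomplex $H$ and $\chi(K_i) = 1 - b_1(K_i)$, gives
\[\chi(G) = \chi(H) + \sum_{i=1}^m \big(1 - b_1(K_i) - c_i\big).\]
Every summand is $\leq 0$, so the equality $\chi(G) = \chi(H)$ forces $b_1(K_i) = 0$ and $c_i = 1$ for each $i$. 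Thus $G$ is obtained from $H$ by attaching trees $K_i$, each glued at a single point $p_i \in H$, and I would take $r$ to be the retraction collapsing each $K_i$ to $p_i$. In particular, for every $q \in H$ the fiber $r^{-1}(q)$ is $\{q\}$ together with the subtrees among the $K_i$ attached at $q$, each of which is a subtree of $G$ connected to the rest of $G$ by the single point $q$.

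Next I would define $g \colon \Conf_n(H) \ra H$ by $g(y) = r(f(y))$; this is continuous and lands in $H$. It remains to verify $g(y) \neq y_j$ for every token $y_j$. Suppose instead $r(f(y)) = y_j$, i.e. $f(y) \in r^{-1}(y_j)$. Since $f$ is identifying we have $f(y) \neq y_j$, so $f(y)$ lies in some tree $T = K_i$ attached at $y_j$. But all other tokens lie in $H \sm \{y_j\}$, hence outside $T$ because $T \cap H = \{y_j\}$; so Corollary \ref{treechasing}, applied with $y_j$ playing the role of the distinguished token sitting at the attachment point, yields $f(y) \notin T$, a contradiction. Hence $g$ is an identifying function on $H$, which is equivalent to the existence of a section.

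The main obstacle is the first step: justifying that a deformation retract of a graph onto a subgraph has exactly this tree-collapsing form, so that the fibers $r^{-1}(q)$ are unions of single-point-attached subtrees and Corollary \ref{treechasing} becomes directly applicable. Once that structural description is secured, the verification via chasing is immediate, and the case $n = 1$ is handled automatically, since the hypothesis on the remaining tokens in Corollary \ref{treechasing} is then vacuous.
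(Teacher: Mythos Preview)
Your argument is correct and matches the paper's proof essentially line for line: both define $g = r \circ f|_{\Conf_n(H)}$ for the tree-collapsing retraction $r$ and invoke Corollary \ref{treechasing} at the attaching point to rule out $g(y) = y_j$. The only difference is that you supply an Euler-characteristic justification for the structural claim that $G$ is $H$ with trees attached at single points, which the paper simply asserts.
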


\begin{proof}
Since $H$ is a deformation retract of $G$, $G$ can be obtained from $H$ by attaching trees to $H$ at single points. Therefore, there exists a retract $r: G \ra H$ which collapses each of these trees to a single point. We define $g: \Conf_n(H) \ra H$ by $g(x) = r(f(x))$. We can see that $g$ is continuous, and we can show that it is also an identifying function. Suppose that $x = (x_1, \ldots, x_i)$ is a configuration, and suppose $x_i = r(f(x))$. Then $f(x) \in T$ for some tree $T$ attached to $G$. Corollary \ref{treechasing} produces a contradiction.
\end{proof}

Finally, we present another consequence of Proposition \ref{distpairprop} which will be useful to us.

\begin{prop}\label{distpaircor2}
If $(i, j)$ and $(k, i)$ are distinguished pairs on $e$, then $j = k$.
\end{prop}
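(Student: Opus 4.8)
The plan is to mimic the proof of Corollary \ref{distpaircor1}: I would produce a single configuration that is forced, via Proposition \ref{distpairprop}, to witness \emph{both} distinguished pairs simultaneously, and then observe that this would place $f(x)$ in two disjoint open intervals.

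I would argue by contradiction, assuming $j \neq k$, so that $i$, $j$, $k$ are pairwise distinct (we already have $i \neq j$ and $i \neq k$ since the entries of a pair are distinct, and note that if $j \neq k$ then necessarily $n \geq 3$). The key structural observation is that the shared index $i$ is the \emph{second} coordinate of $(k, i)$ but the \emph{first} coordinate of $(i, j)$. Hence property (b) of Definition \ref{distpair}, applied to each pair, forces the compatible linear order: $x_k$ before $x_i$ before $x_j$ along the fixed orientation of $e$. I would then directly construct a configuration $x = (x_1, \ldots, x_n)$ placing $x_k, x_i, x_j$ at three interior points of $e$ in this order, with the remaining $n - 3$ tokens at arbitrary distinct points of $G$ lying outside both open intervals $(x_k, x_i)$ and $(x_i, x_j)$. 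This is possible since the interior of $e$ is homeomorphic to an open interval and only two bounded subintervals must be avoided.

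Next I would check that $x$ satisfies hypotheses (a)--(c) of Definition \ref{distpair} with respect to \emph{each} pair. For $(i, j)$: we have $x_i, x_j \in e$ with $x_i$ before $x_j$, and no token lies in $(x_i, x_j)$ (in particular $x_k$ lies before $x_i$, hence outside). For $(k, i)$: we have $x_k, x_i \in e$ with $x_k$ before $x_i$, and no token lies in $(x_k, x_i)$ (in particular $x_j$ lies after $x_i$). Proposition \ref{distpairprop} then upgrades (a)--(c) to (d) for each pair, giving simultaneously $f(x) \in (x_i, x_j)$ and $f(x) \in (x_k, x_i)$. Since $x_k$, $x_i$, $x_j$ occur in this order on $e$, the intervals $(x_k, x_i)$ and $(x_i, x_j)$ are disjoint, so $f(x)$ cannot lie in both, a contradiction. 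Hence $j = k$.

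The only genuine obstacle is the construction step: one must ensure a single configuration can witness both pairs at once. Unlike Corollary \ref{distpaircor1}, where the pairs are index-disjoint and can be arranged independently, here the pairs overlap in $i$, and a priori their required orderings could conflict. The resolution is exactly the observation that $i$ plays opposite roles in the two pairs, so the orderings chain together into $x_k < x_i < x_j$; this is what makes the two forbidden intervals nest into disjoint adjacent intervals rather than overlap. I would also remark that the loop/$S^1$ ambiguity in the interval notation flagged in Section 2 does not intervene, since all three points lie in the interior of the single edge $e$ and both intervals are read off along $e$.
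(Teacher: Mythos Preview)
Your argument is correct and is essentially the paper's own proof: assume $j \neq k$, place $x_k < x_i < x_j$ on $e$ with the remaining tokens outside $[x_k, x_j]$, apply Proposition \ref{distpairprop} to both pairs, and derive the contradiction $f(x) \in (x_k, x_i) \cap (x_i, x_j) = \es$. Your write-up is in fact more careful than the paper's in spelling out why the two required orderings are compatible and why (a)--(c) hold for each pair.
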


\begin{proof}
Suppose otherwise, then we can construct a configuration $x = (x_1, \ldots, x_n)$ which is a simultaneous witness to $(i, j)$ and $(k, i)$, by picking $x_i, x_j, x_k$ all lying on $e$ with $x_k < x_i < x_j$ with respect to the orientation on $e$, and no other tokens contained in the interval $[x_k, x_j]$. By Proposition \ref{distpairprop}, such a configuration is a witness to both distinguished pairs, necessitating $f(x) \in (x_i, x_j)$ and $f(x) \in (x_k, x_i)$, which is a contradiction.
\end{proof}

We can now present the final portion of the proof of Theorem \ref{mainthm}.

\begin{proof}[Proof when $n = 3$ and $\chi(G) = -1$]
It suffices to consider this in the case where $G$ has no free vertices by Prop \ref{defrettosbgrph}. The only such graphs with $\chi(G) = -1$ are the graphs $\infty$, $\Th$, and $D$ as before. Suppose for sake of contradiction that $f$ is an identifying function on $G$.

We will define two functions 
\[\Hor: \Si_3 \ra \{\pm 1\},\] \[\Ver: \Si_3 \ra \{\pm 1\}.\] 
For a permutation $\si \in \Si_3$, consider the configuration $x_\si$ depicted in Figure \ref{fig:permconfs}.

\begin{figure}[h]
    \labellist
    \small\hair 2pt
    \pinlabel $x_{\si, 1}$ at 20 30
    \pinlabel $x_{\si, 2}$ at 75 25
    \pinlabel $x_{\si, 3}$ at 125 30
    \pinlabel $x_{\si, 1}$ at 190 30
    \pinlabel $x_{\si, 2}$ at 255 30
    \pinlabel $x_{\si, 3}$ at 295 30
    \pinlabel $x_{\si, 1}$ at 370 30
    \pinlabel $x_{\si, 2}$ at 430 40
    \pinlabel $x_{\si, 3}$ at 500 30
    \endlabellist
    \centering
    \includegraphics[scale = .8]{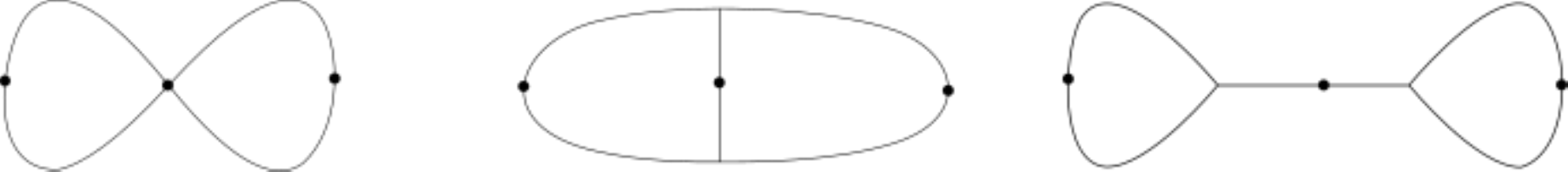}
    \caption{The configurations $x_{\si}$}
    \label{fig:permconfs}
\end{figure}

Denote by $x_{\si, i}$ the $i$'th token of $x_\si$. We consider the components of $G \sm x_\si$ to be either on the ``top'' or ``bottom,'' (resp. ``left,'' ``right'') corresponding to their depictions in the figure. When $G = \infty, \Th$, then we say $\Ver(\si) = +1$ if $f(x_\si)$ lies in one of the top components of $G \sm x_\si$, and $\Ver(\si) = -1$ otherwise. Similarly, when $G = \infty, D$ we say $\Hor(\si) = +1$ if $f(x_\si)$ lies in one of the right components of $G \sm x_\si$ and $\Hor(\si) = -1$ otherwise. To compute $\Ver$ for $G = D$, define $x'_\si$ to be the configuration obtained by moving along a Type II path which moves $x_{\si, 2}$ towards $f(x_\si)$. By Proposition \ref{principle2}, $f(x'_\si)$ lies in either the top or bottom component of $G \sm x'_\si$ contained in the loop containing $x_{\si, 2}$. We say that $\Ver(\si) = +1$ if $f(x'_\si)$ is in the top component of $G \sm x'_\si$ in this loop, and $\Ver(\si) = -1$ if $f(x'_\si)$ is in the bottom component of $G$. Similarly, for $G = \Th$, define $x'_\si$ to be the configuration obtained by moving along a Type II path which moves $x_{\si, 2}$ towards $f(x_\si)$. As before, we can conclude by Proposition \ref{principle2} that $f(x'_\si)$ lies on one of the components of $G \sm x'_\si$ in the direction that $x_{\si, 2}$ moved. We can therefore define $\Hor(\si) = +1$ if $f(x'_\si) \in (x'_{\si, 2}, x'_{\si, 3})$, and $\Hor(\si) = -1$ otherwise. Figure \ref{fig:verhor} shows a particular example of this construction.

$\,$

\begin{figure}[h]
    \labellist
    \pinlabel $x'_{\si, 1}$ at 15 25
    \pinlabel $x'_{\si, 2}$ at 80 40
    \pinlabel $f(x'_\si)$ at 110 35
    \pinlabel $x'_{\si, 3}$ at 155 25
    \pinlabel $x'_{\si, 1}$ at 205 25
    \pinlabel $x'_{\si, 2}$ at 320 35
    \pinlabel $f(x'_\si)$ at 365 65
    \pinlabel $x'_{\si, 3}$ at 395 25
    \endlabellist
    \centering
    \includegraphics{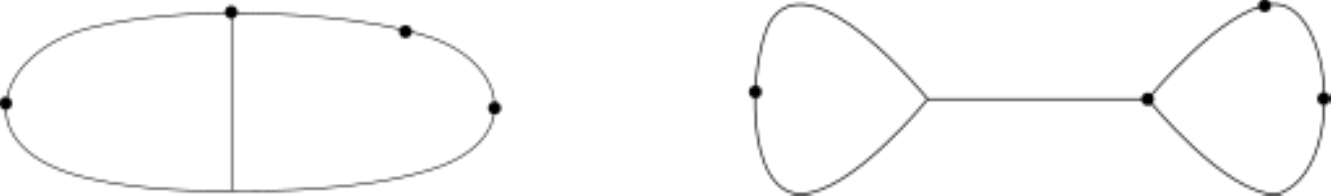}
    \caption{}
    \label{fig:verhor}
\end{figure}

Label the edges containing $x_{\si, 1}$ and $x_{\si, 3}$ by $e_1, e_3$ respectively. Note that $\Ver$ and $\Hor$ associate a distinguished pair to each $\si$ which contains $\si(2)$. Explicitly, this association is given by:

\[\si \mapsto (\si(2), \si(2 + \Hor(\si))) \text{ on } \Ver(\si) \cdot e_{2 + \Hor(\si)},\]

where $e_1, e_3$ are equipped with appropriate orientation, and where $\{\pm 1\}$ acts on the oriented edges of $G$ by changing orientation. Our next task is to determine how $\Ver$ and $\Hor$ interact with the multiplication in $\Si_3$. By possibly applying a symmetry to $G$, assume that $\Hor(\id) = \Ver(\id) = +1$. We make the following claims for all $\si \in \Si_3$:

\begin{align*}
    \Hor(\si) &= +1,\\
    \Ver((12)\si) &= \Ver(\si),\\
    \Ver((23)\si) &= -\Ver(\si).
\end{align*}

If we can prove these claims, then the remainder of the proof is simple, since we can write:
\[\Ver(\id) = \Ver((123)(123)(123)) = \Ver((12)(23)(12)(23)(12)(23)) = -\Ver(\id),\]

a contradiction.

All that remains is to prove the claim. We will prove this only in the case $G = \infty$, but the other cases are not so different.

We will show that $\Hor((12)) = \Hor((23)) = \Ver((12)) = +1$, and $\Ver((23)) = -1$. This will imply the rest of the claim by replacing $f$ with the identifying function $f_\si$, defined by
\[(x_1, x_2, x_3) \mapsto f(x_{\si(1)}, x_{\si(2)}, x_{\si(3)}).\]

Suppose the components of $G \sm x_{\id}$ are labelled as in Figure \ref{fig:figcpnts}

$\,$

\begin{figure}[h]
    \labellist
    \small\hair 2pt
    \pinlabel $X_1$ at 20 65
    \pinlabel $X_3$ at 15 10
    \pinlabel $X_2$ at 100 65
    \pinlabel $X_4$ at 95 10
    \endlabellist
    \centering
    \includegraphics{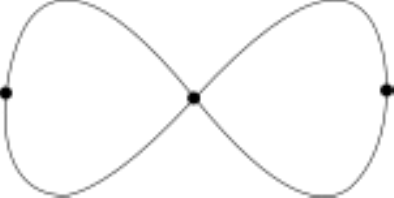}
    \caption{The components of $G \sm x_{\id}$}
    \label{fig:figcpnts}
\end{figure}

By assumption, we have $f(x_{\id}) \in X_2$. Consider the path $\al$ between $x_{\id}$ and $x_{(23)}$ which rotates $x_{\id, 2}$ and $x_{\id, 3}$ a half turn clockwise around the loop on the right. By representing $\al$ as a concatenation of a Type I and Type II path, we can see using Propositions \ref{principle1} and \ref{principle2} that $f(x_{(23)}) \in X_4$, so $\Ver((23)) = -1$ and $\Hor((23)) = +1$.

Now, consider the path $\be_1$ from $x_{\id}$ and $x_{(12)}$ which rotates $x_{\id, 1}$ and $x_{\id, 2}$ clockwise a half-turn around the right circle, and the path $\be_2$ which rotates them counter-clockwise a half-turn. By representing these as concatenations of a Type I and a Type II path, we can use Propositions \ref{principle1} and \ref{principle2} to show that $X_2 \not\lt_{\be_1} X_1$, and $X_2 \not\lt_{\be_2} X_3$. Finally, Proposition \ref{distpaircor2} shows that $f(x_{(12)}) \notin X_3$, otherwise $(2, 3), (3, 1)$ would both be distinguished pairs in $e_3$. Therefore, $f(x_{(12)}) \in X_2$, so $\Ver((12)) = \Hor((12)) = +1$, thereby establishing the claim, and concluding the proof.
\end{proof}

\section{A Combinatorial Model}\label{combmod}

In this section, we discuss $K_n(G)$, a combinatorial approximation to the spaces $\Conf_n(G)$ which was introduced in \cite{Luetgehetmann14}. These spaces are very useful for constructing sections in the cases when they exist, as we will show that partially defined identifying functions on $K_n(G)$ often extend to $\Conf_n(G)$. 

The outline of this section is as follows: we will first define a poset $P_n(G)$, which is defined slightly differently from the definition given in \cite{Luetgehetmann14}, but the two are easily seen to be equivalent. We will define $K_n(G)$ as the cube complex whose face poset is $P_n(G)$, and state without proof the fact that $K_n(G)$ embeds as a deformation retract of $\Conf_n(G)$. We will conclude by discussing which identifying functions on $K_n(G)$ extend to $\Conf_n(G)$.

Let $\E$ be the set of oriented edges of $G$, and let $\bar{\E} \sbs \E$ be the set of positively oriented edges, where we fix some positive orientation on each edge. Let $\B$ be the set of branched vertices of $G$. For each $e \in \E$, let $v_e$ be the endpoint of $e$ in the direction that $e$ is pointing, and let $\bar{e}$ be the element of $\bar{\E}$ corresponding to $e$. Recall that an \textit{index} denotes an integer $1, \ldots, n$. First we define the $k$-dimensional faces of $P_n(G)$.

\begin{defin}
A \textit{$k$-face} $F$ of $P_n(G)$ associates indices to the elements of $\bar{\E} \sqcup \B \sqcup \E$ as follows:
\begin{enumerate}[(i)]
    \item To each positively oriented edge $\bar{e} \in \bar{\E}$, there is an ordered tuple of associated indices:
    \[F(\bar{e}) = (i_{\bar{e},1}, \ldots, i_{\bar{e},\ell}).\]
    \item To each branched vertex $v \in \B$, there is \textit{at most} one associated index $F(v) = i_v$.
    \item To \textit{exactly} $k$ oriented edges $e \in \E$ such that $v_e \in \B$, there is an associated index $F(e) = i_e$.
    \item Each index occurs exactly once as an index associated to an element of $\bar{\E} \sqcup \B \sqcup \E$.
    \item For each vertex $v \in \B$, there is at most one index of the form $F(v)$ or $F(e)$, where $e \in \E$ is an oriented edge with $v_e = v$.
\end{enumerate}
\end{defin}

This gives a definition of $P_n(G)$ as a graded set. We now can define the partial ordering.

\begin{defin}
If $F$ is a $k$-face, and $e \in \E$ is an oriented edge such that $F(e)$ is defined, then we will define two $(k-1)$-faces, $F_e^+$ and $F_e^-$, which we will use to define the order on $P_n(G)$. First, we define $F_e^+$ as follows:
\begin{enumerate}[(i)]
    \item $F_e^+(\bar{e}') = F(\bar{e}')$ for all $\bar{e}' \in \bar{\E}$.
    \item $F_e^+(v_e) = F(e)$. For $v \in \B$ with $v \neq v_e$, $F_e^+(v)$ exists if and only if $F(v)$ does, and if they are defined, the two are equal.
    \item $F_e^+(e)$ is undefined, and for $e' \in \E$ with $e' \neq e$, $F_e^+(e')$ is defined if and only if $F(e)$ is, and if they are defined, the two are equal.
\end{enumerate}
Now we define $F_e^-$ as follows:
\begin{enumerate}[(i)]
    \item $F_e^-(\bar{e})$ is equal to $F(\bar{e})$ with $F(e)$ attached to the back when $e = \bar{e}$ (that is, when $e$ has positive orientation), and $F(\bar{e})$ with $F(e)$ attached to the front when $-e = \bar{e}$. For $\bar{e}' \in \bar{\E}$ with $\bar{e}' \neq \bar{e}$, define $F_e^-(\bar{e}') = F(\bar{e}')$.
    
    \item For $v \in \B$, $F_e^-(v)$ exists if and only if $F(v)$ does, and when they both exist the two are equal.
    
    \item $F_e^-(e)$ is undefined, and for $e' \in \E$ with $e' \neq e$, $F_e^-(e')$ exists if and only if $F(e')$ does, and when they both exist the two are equal.
\end{enumerate}
One can verify that $F_e^+$ and $F_e^-$ are indeed $(k-1)$-faces of $P_n(G)$. We define the ordering $\succ$ on $P_n(G)$ to be generated by the relations $F \succ F_e^+$ and $F \succ F_e^-$ for each face $F$ and each oriented edge $e$ such that $F(e)$ exists.
\end{defin}

The important facts about $P_n(G)$ are summarized in the following theorem, whose statements are proven in \cite{Luetgehetmann14}.

\begin{thm}\label{knembedding}
The graded poset $P_n(G)$ is the face poset of a unique (up to isomorphism) abstract cube complex $K_n(G)$. There is an embedding $i: K_n(G) \hra \Conf_n(G)$, and a deformation retract $h_t: \Conf_n(G) \ra \Conf_n(G)$ of $\Conf_n(G)$ onto the image of $K_n(G)$ in $\Conf_n(G)$. In addition, the following holds:
\begin{enumerate}[(a)]
    \item\label{evenlyspaced} The embedding $i$ takes vertices of $K_n(G)$ exactly to the configurations where the tokens are spread out evenly on each edge.
    
    \item\label{bitypei} For any configuration $x \in \Conf_n(G)$, no tokens of $x$ leave or enter any edges or vertices along the path $t \mapsto h_t(x)$. Alternatively, the paths $t \mapsto h_t(x)$ and $t \mapsto h_{1-t}(x)$ are both Type I for every $x$.
    
    \item\label{bdddist} Tokens of configurations in $K_n(G)$ are uniformly far from each other. That is, there exists some $\ep > 0$ such that for all $x = (x_1, \ldots, x_n) \in K_n(G)$, we have $d(x_i, x_j) \geq \ep$.
\end{enumerate}
\end{thm}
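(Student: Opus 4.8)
The plan is to establish the four assertions in the order stated, treating the combinatorics, the embedding, and the retraction as three separate tasks, the last being the real work.

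First I would verify that $P_n(G)$ is the face poset of an abstract cube complex. By definition this amounts to checking that for every $k$-face $F$ the principal down-set $\{F' : F' \preceq F\}$ is isomorphic, as a poset, to the face lattice of the standard $k$-cube. Given a $k$-face $F$, let $e_1, \ldots, e_k$ be the (exactly $k$) oriented edges on which $F$ is defined in the sense of (iii). For each $e_i$ the covering relations produce the two $(k-1)$-faces $F_{e_i}^+$ (absorbing the moving index into the vertex $v_{e_i}$) and $F_{e_i}^-$ (absorbing it into the adjacent tuple). I would check that conditions (iv) and (v) guarantee these are again legitimate faces, and that the operations $(\,\cdot\,)_{e_i}^{\pm}$ for distinct $i$ commute. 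This identifies $\{F' : F' \preceq F\}$ with $\{-, 0, +\}^k$ under the product order, which is exactly the face lattice of $[0,1]^k$; uniqueness of $K_n(G)$ then follows since a regular cube complex is determined by its face poset.

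Second, I would construct the embedding $i$ cell by cell and glue. Using the metric $d$, pick for each branched vertex $v$ and outgoing oriented edge $e$ a short initial segment $[v, m_e] \sbs e$, chosen disjoint from one another and short enough to leave room in the ``bulk'' of each edge. For a $k$-face $F$ with active edges $e_1, \ldots, e_k$, map the cube $[0,1]^k$ into $\Conf_n(G)$ by placing the indices of each tuple $F(\bar e)$ at evenly spaced interior points of $e$ in the prescribed order, placing each $F(v)$ at $v$, and letting the $i$-th moving index traverse $[v_{e_i}, m_{e_i}]$ according to the coordinate $t_i$ (with $t_i = 0$ at $v_{e_i}$ and $t_i = 1$ at $m_{e_i}$, where it joins the tuple). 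I would check that the $t_i = 0$ and $t_i = 1$ faces reproduce exactly the placements prescribed by $F_{e_i}^+$ and $F_{e_i}^-$, so the cell maps agree on shared faces and glue to a continuous map $i: K_n(G) \hra \Conf_n(G)$. Injectivity is immediate because the combinatorial type and cube coordinates can be recovered from the positions, and since $K_n(G)$ is compact and $\Conf_n(G)$ Hausdorff, $i$ is automatically an embedding. Assertion (a) is then just the description of the $0$-faces, and assertion (c) comes for free: $i(K_n(G))$ is compact and the function $\min_{i \neq j} d(x_i, x_j)$ is continuous and strictly positive on genuine configurations, hence attains a positive minimum $\ep$.

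Third, I would produce the deformation retraction $h_t$ onto $i(K_n(G))$ satisfying (b), and this is where I expect the difficulty. The guiding principle is that $h_t$ must never move a token across a vertex: an interior token stays in the interior of its edge and a vertex token stays fixed, so the combinatorial type is preserved along the entire path (which is exactly what makes both $t \mapsto h_t(x)$ and its reverse Type I). Within this constraint $h_t$ should slide the tokens, by geodesics inside their edges, from their given positions to the canonical model positions of their cell, interpolating linearly in $t$ while fixing endpoints and vertex tokens; one then checks $h_0 = \id$, that $h_1$ lands in $i(K_n(G))$, and that $i(K_n(G))$ is fixed pointwise. The main obstacle is the continuity of this normalization across the strata where a token approaches a branched vertex: the naive ``evenly spaced by count'' target is discontinuous there, since the number of interior tokens on an edge jumps. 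The fix is to define the target positions as a genuinely continuous function of the actual positions — an order-preserving self-map of each edge depending continuously on the boundary data that degenerates correctly as a token limits to a vertex, so that the target of a limiting configuration agrees with the target of the lower-dimensional cell it lies in. Verifying this limiting compatibility, while simultaneously confirming that the homotopy respects (b) everywhere, is the delicate point; if a direct straightening resists a clean continuity argument, I would instead build the retraction from a discrete-Morse/collapsing argument on a suitable cover of $\Conf_n(G)$, or defer to the construction in \cite{Luetgehetmann14}.
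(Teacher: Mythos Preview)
The paper does not actually prove Theorem~\ref{knembedding}. Immediately before the statement it says ``whose statements are proven in \cite{Luetgehetmann14},'' and no argument follows; the theorem is quoted as a black box from that reference. So there is no proof in the paper against which to compare your proposal --- the paper's ``proof'' is the citation, which you yourself mention as a fallback in your final sentence.

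That said, your outline is a sensible reconstruction of how such a result is established. The combinatorial check that the down-set of a $k$-face is the face lattice of a $k$-cube via commuting $(\cdot)_{e_i}^{\pm}$ operations is the right idea, and your description of the embedding and of property~(c) via compactness is fine. You are also right that the deformation retraction is the substantive part and that naive edge-wise normalization is discontinuous across strata; the remedy you sketch (making target positions depend continuously on boundary data so that degenerations match lower cells) is in the spirit of L\"utgehetmann's construction, though your description stops short of an actual formula or verification. If you want a self-contained proof rather than a citation, that continuity check is precisely what you would need to carry out in detail; otherwise, citing \cite{Luetgehetmann14} as the paper does is the honest route.
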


\begin{cor}\label{htydim}
If $G$ has $k$ branched vertices, then $\Conf_n(G)$ has the homotopy type of a $\min\{k, n\}$-dimensional cell complex.
\end{cor}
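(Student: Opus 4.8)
The plan is to leverage Theorem~\ref{knembedding}, which provides a deformation retraction of $\Conf_n(G)$ onto the image of the cube complex $K_n(G)$. Since homotopy equivalent spaces have the same homotopy type, it suffices to bound the dimension of $K_n(G)$. As $K_n(G)$ is a cube complex whose face poset is $P_n(G)$, its dimension is exactly the largest $k$ for which $P_n(G)$ contains a $k$-face. Writing $b$ for the number of branched vertices of $G$ (the quantity called $k$ in the statement), I would show that every face of $P_n(G)$ has dimension at most $\min\{b, n\}$, so that $K_n(G)$ is a cell complex of dimension $\le \min\{b,n\}$ homotopy equivalent to $\Conf_n(G)$.

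The dimension bound is a direct reading of the definition of a $k$-face $F$. By clause~(iii), a $k$-face is precisely one for which exactly $k$ oriented edges $e$ with $v_e \in \B$ carry an associated index $F(e)$; thus I must bound the number of such ``edge slots'' that can be simultaneously occupied. First, clause~(iv) forces the values $F(e)$ for these $k$ edges to be $k$ distinct indices drawn from $\{1, \dots, n\}$, which immediately gives $k \le n$. Second, clause~(v) says that each branched vertex $v$ admits at most one index of the form $F(v)$ or $F(e)$ with $v_e = v$; in particular no two of the $k$ occupied edges can share the same head $v_e$. Hence $e \mapsto v_e$ is an injection from the $k$ occupied edges into $\B$, giving $k \le b$. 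Combining, $k \le \min\{b,n\}$.

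Putting these together yields $\dim K_n(G) \le \min\{b,n\}$, and the corollary follows. I do not anticipate a genuine obstacle here: the argument is essentially combinatorial bookkeeping once the deformation retraction is in hand, and the only point requiring care is the precise use of clause~(v) to secure the injectivity of $e \mapsto v_e$ (it is what prevents two tokens from crowding onto the same branched vertex within a single face). If one wishes the stated dimension to be attained exactly, I would also exhibit a $\min\{b,n\}$-face by choosing $\min\{b,n\}$ distinct branched vertices, routing one oriented edge into each with a distinct index, and parking the remaining indices on positively oriented edges; this is routine and is not needed for the homotopy-type conclusion.
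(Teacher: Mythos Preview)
Your proposal is correct and follows the same approach as the paper: invoke Theorem~\ref{knembedding} to replace $\Conf_n(G)$ by $K_n(G)$, then bound $\dim K_n(G)$ by $\min\{k,n\}$ via the combinatorics of $P_n(G)$. The paper's proof simply asserts that $K_n(G)$ is $\min\{k,n\}$-dimensional and defers the details to \cite{Luetgehetmann14}, whereas you have supplied those details directly from clauses (iii)--(v) of the definition of a $k$-face.
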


\begin{proof}
Indeed, one can check that $K_n(G)$ is $\min\{k, n\}$ dimensional. Details can be found in \cite{Luetgehetmann14}.
\end{proof}

% \begin{cor}\label{splittingcor}
% If $G$ has only one branched vertex, then the short exact sequence
% \begin{equation*}
%   1 \ra \ker(\phi_{n+1, *}) \ra PB_{n+1}(G) \xra{\phi_{n+1, *}} PB_n(G) \ra 1 
% \end{equation*}
% of groups, discussed in Section \ref{grouphom}, splits for every $n$.
% \end{cor}

% \begin{proof}
% By Corollary \ref{htydim}, $\Conf_{n}(G)$ has the homotopy type of a graph, so $PB_n(G)$ is a free group, and the sequence splits.
% \end{proof}

%Corollary \ref{splittingcor} provides, along with Proposition \ref{freevertsplit}, a class of examples of graphs such that the group homomorphism $\phi_{n+1, *}$ splits, although the map $\phi_{n+1}$ of topological spaces does not admit a section.

Intuitively, we can imagine the vertices of $K_n(G)$ to be the configurations of $\Conf_n(G)$ where the vertices in each edge are equally spaced, as suggested in Theorem \ref{knembedding}\ref{evenlyspaced}. Given such a configuration $x$, we discuss how to define the associated vertex $F$ of $P_n(G)$. For a positively oriented edge $\bar{e} \in \bar{\E}$, set $F(\bar{e})$ to be the tuple of indices whose corresponding tokens of $x$ lie in $\bar{e}$, ordered consistently with the orientation on $\bar{e}$. For a branched vertex $v \in \B$, set $F(v)$ to be the index of the token of $x$, if it exists, occupying $v$.

An edge in $K_n(G)$ corresponds to a continuous move between vertices of $K_n(G)$ which moves a single token from a vertex into the interior of an edge. Similarly, a $k$-dimensional face corresponds to a set of $k$ moves which can be performed consistently, that is, they involve distinct vertices and tokens.

In our construction of the poset $P_n(G)$, for a fixed face $F$, the indices associated with positively oriented edges $\bar{e}$ and vertices $v$ correspond to tokens which are essentially fixed on the image of $F$, and the location of the token corresponding to such an index is determined by which positively oriented edge or vertex it is associated to. Conversely, the indices associated to oriented edges $e$ correspond to tokens which can move from the interior of $e$ to the vertex $v_e$ throughout $F$. For such an $e$, the associated face $F_e^+$ corresponds to the boundary face of $F$ where the moving token on $e$ is instead taken to be stationary at $v_e$, and $F_e^-$ corresponds to the boundary face where the moving token on $e$ is taken to be stationary on the interior of $e$.

\begin{example}
Let $G$ be the dumbbell graph pictured in Figure \ref{fig:dumbbell}, with the given labels and choices of positive orientations, and let $n = 5$.

\begin{figure}[h]
    \labellist
    \small\hair 2pt
    \pinlabel $e_1$ at -10 30
    \pinlabel $b_1$ at 55 40
    \pinlabel $e_3$ at 80 35
    \pinlabel $b_2$ at 110 40
    \pinlabel $e_2$ at 175 30
    \endlabellist
    \centering
    \includegraphics[scale = .8]{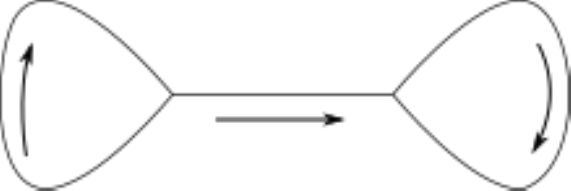}
    \caption{}
    \label{fig:dumbbell}
\end{figure}

A vertex of $K_n(G)$ maps to a configuration like $x$, pictured in Figure \ref{fig:dumbvert}, such that the vertices on each edge are evenly spaced apart.

\begin{figure}[h]
    \labellist
    \small\hair 2pt
    \pinlabel $x_1$ at -10 30
    \pinlabel $x_2$ at 110 40
    \pinlabel $x_5$ at 45 55
    \pinlabel $x_4$ at 180 30
    \pinlabel $x_3$ at 45 5
    \endlabellist
    \centering
    \includegraphics[scale = .7]{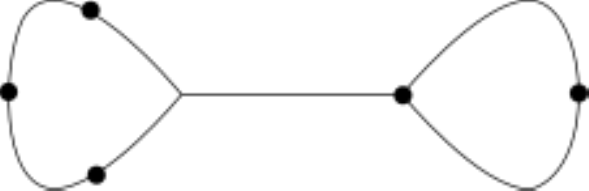}
    \caption{}
    \label{fig:dumbvert}
\end{figure}

Let $F_x$ be the $0$-face which maps to $x$. Explicitly, we can compute:
\begin{align*}
    F(\bar{e_1}) &= (3, 1, 5), \\
    F(\bar{e_2}) &= (4), \\
    F(b_2) &= 2,
\end{align*}
and every other value of $F$ is trivial.
\end{example}

We now turn to the question of extending identifying functions defined on $K_n(G)$ to $\Conf_n(G)$.

The important fact is that under certain mild conditions, we can extend any identifying function defined on $K_n(G)$ to all of $\Conf_n(G)$.

\begin{defin}\label{extendable}
Suppose $A \sbs K_n(G)$ is a subset containing the $0$-skeleton of $K_n(G)$, and $f: A \ra G$ is a partially defined identifying function. We say that $f$ is \textit{extendable} if the following situation never occurs: Select a free edge $e$ (edge connected to a free vertex) which is connected to a branched vertex $v$. Then there exists a vertex $F$ of $K_n(G)$ where $F(v)$ exists, $F(\bar{e})$ is empty, and $f(F) \in e$.
\end{defin}

\begin{prop}\label{extextends}
A partially defined identifying function $f: K_n(G) \ra G$ extends to an identifying function on $\Conf_n(G)$ if and only if $f$ is extendable.
\end{prop}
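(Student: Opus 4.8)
The plan is to prove both directions of the biconditional, with the reverse direction (extendability implies extension exists) being the substantive one. The forward direction should be nearly immediate: if $f$ extends to an honest identifying function $\tilde f$ on all of $\Conf_n(G)$, and if the forbidden situation occurred at some vertex $F$ with $F(v)$ defined, $F(\bar e)$ empty, and $\tilde f(F) \in e$ on a free edge $e$ attached to branched vertex $v$, then I would slide the token sitting at $v$ (the one indexed by $F(v)$) into the edge $e$ along a Type I/Type II path, producing a configuration where $\tilde f$ is forced to stay inside the free edge $e$ by Propositions \ref{principle1} and \ref{principle2}. Since $e$ is a free edge (bounded by $v$ and a free vertex), once a token indexed by $F(v)$ occupies the far end of $e$ and $\tilde f$ lies between it and the free vertex, Corollary \ref{treechasing} (applied to the subtree $e$ attached at $v$) should yield a contradiction — so the forbidden configuration cannot exist for a genuine identifying function, establishing necessity.

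For the reverse direction, I would first use the deformation retract $h_t: \Conf_n(G) \to \Conf_n(G)$ onto $K_n(G)$ from Theorem \ref{knembedding}. The naive idea is to define $\tilde f(x) = f(h_1(x))$, i.e.\ retract any configuration onto $K_n(G)$ and apply $f$ there. The crucial point to check is that this $\tilde f$ is actually an \emph{identifying function}, i.e.\ that $\tilde f(x) \neq x_i$ for every index $i$. The danger is that $f(h_1(x))$, though distinct from the tokens of the retracted configuration $h_1(x)$, might coincide with one of the original tokens $x_i$ of $x$. Here is where extendability and property \ref{bitypei} of Theorem \ref{knembedding} enter: along the retraction path $t \mapsto h_t(x)$, no token crosses any vertex or changes which edge it occupies. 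So $f(h_1(x))$ and the point $\tilde f(x)$ we wish to assign lie in a controlled region, and I would need to define $\tilde f(x)$ more carefully than the naive formula — most plausibly by \emph{transporting} the value $f(h_1(x))$ back along the reverse retraction, using the relation $\lt_\al$ to keep it in a well-defined component of $G \sm x$.

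The main obstacle, and the heart of the argument, is precisely the collision problem near free edges. When $f(h_1(x))$ lands in a free edge $e$ whose branched endpoint $v$ is occupied by a token (the situation controlled by Definition \ref{extendable}), the component of $G \sm h_1(x)$ containing $f(h_1(x))$ may fail to persist as a single well-defined target component of $G \sm x$ when we pull back along the reverse retraction, because the positions of tokens on $e$ can shift. The extendability hypothesis is exactly the condition ruling out the one bad configuration type where this transport is ill-defined; I expect the proof to reduce, via Propositions \ref{principle1}, \ref{principle2} and \ref{idfnlcs}, to verifying that extendability guarantees $f(h_1(x))$ always lies in a component of $G \sm h_1(x)$ that corresponds under $\lt$ to a genuine component of $G \sm x$ avoiding every $x_i$. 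The remaining verification — continuity of $\tilde f$ and that it restricts to $f$ on $A$ — should follow from the continuity of $h_t$ and standard local arguments of the kind used in Proposition \ref{idfnlcs}.
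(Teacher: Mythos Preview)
Your plan is correct and follows the same approach as the paper: retract via $h_t$, transport $f(h_1(x))$ back along the reverse (Type I) path, and use extendability to rule out the one collapse that can occur on a free edge; the necessity direction is exactly Corollary~\ref{treechasing} as you say. The only thing you have left vague is the actual point-level transport --- the relation $\lt_\al$ hands you a component, not a point --- and the paper resolves this concretely by noting that $f(h_1(x))$ lies in an interval $(a_1,a_2)$ of $G \sm (h_1(x) \cup \B)$, sending each endpoint $a_i$ to the corresponding original token or vertex, and placing $\tilde f(x)$ in the resulting interval at the same proportional position; extendability is invoked precisely to guarantee this target interval is nonempty.
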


\begin{proof}
Suppose $f$ is extendable, and let $x = (x_1, \ldots, x_n)$ be a configuration, let $y = (y_1, \ldots, y_n) = h_1(x)$, and let $p = f(y)$. If $p$ is a branched vertex of $G$, then define $f(x) = p$. Note that $p \neq x_1, \ldots, x_n$, since the path $t \mapsto h_{1-t}$ is Type I by Theorem \ref{knembedding}\ref{bitypei}. If $p$ is not a branched vertex of $G$, then $p$ lies in some component $A$ of $G \sm (h_1(x) \cup \B)$. Note that $A$ must be either: an open interval $(a_1, a_2)$ or a half-open interval $(a_1, a_2]$, where $a_2$ is a free vertex of $G$. Let $V$ be the set of (branched or free) vertices of $G$. Define a function 
\[\psi: \{y_1, \ldots, y_n\} \sqcup V \ra G\]
by:
\[\psi(q) = 
\begin{cases} 
x_j & q = y_j \\
q & q \in V
\end{cases}.\]

We would like to define $f(x)$ to lie in the open interval $(\psi(a_1), \psi(a_2))$, where we take $a_1, a_2 \in \{y_1, \ldots, y_n\}$ if possible. 

First we must check that $(\psi(a_1), \psi(a_2))$ is nonempty. If $\psi(a_1) = \psi(a_2)$, then we must have that $a_2$ is a free vertex of $G$ and $a_1 = y_j$ for some $j$. Then, there is a path in $\Conf_n(G)$ from $y$ to a vertex $F$ of $K_n(G)$ such that $y_j$ moves directly onto the nearest branched vertex $v$. By Lemma \ref{growingcomponents} and Proposition \ref{principle2}, if $f$ extends, we must have $f(F) \in [a_2, v)$, which contradicts the fact that $f$ is extendable. Therefore $(\psi(a_1), \psi(a_2))$ is a nonempty interval. 

We define $f(x)$ to lie between $\psi(a_1), \psi(a_2)$ in the same proportion that $f(y)$ lies between $a_1, a_2$. This defines $f$ on $\Conf_n(G)$, and it is easily verified to be a continuous identifying function.

Now suppose $f$ is not extendable. Note that the situation in Definition \ref{extendable} contradicts Corollary \ref{treechasing}, so $f$ does not extend to an identifying function on $\Conf_n(G)$.
\end{proof}

\begin{cor}
If $G$ has no free vertices, then every identifying function on $K_n(G)$ extends to $\Conf_n(G)$.
\end{cor}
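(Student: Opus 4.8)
The plan is to derive this as an immediate special case of Proposition \ref{extextends}. The key observation is that the obstruction in Definition \ref{extendable} can never arise when $G$ has no free vertices. Indeed, the non-extendability condition explicitly begins by requiring one to ``select a free edge $e$ (edge connected to a free vertex).'' If $G$ has no free vertices, then $G$ has no free edges, so there is nothing to select, and the problematic situation is vacuously impossible.

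Concretely, I would argue as follows. Let $f: K_n(G) \ra G$ be any identifying function defined on the combinatorial model $K_n(G)$ (which in particular contains the $0$-skeleton, so it is a partially defined identifying function in the sense of Definition \ref{extendable}). To apply Proposition \ref{extextends}, it suffices to verify that $f$ is extendable. By hypothesis $G$ has no free vertices, hence no vertex of degree $1$, hence no edge $e$ is connected to a free vertex. Since the definition of non-extendability requires the existence of such a free edge $e$ together with a vertex $F$ satisfying the three listed conditions, and no free edge exists at all, the forbidden configuration cannot occur. Therefore $f$ is extendable, and Proposition \ref{extextends} guarantees that $f$ extends to an identifying function on $\Conf_n(G)$.

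I anticipate no genuine obstacle here; the entire content is the vacuous-quantifier observation that ``no free vertices $\Rightarrow$ no free edges $\Rightarrow$ the obstruction set is empty.'' The only point requiring a moment's care is confirming that a free vertex (degree $1$) is exactly what makes an edge ``free'' in the sense used in Definition \ref{extendable}, which it is by the terminology fixed in Section 2, so the implication from ``no free vertices'' to ``no free edges'' is immediate. The proof is therefore a one-line invocation: every identifying function on $K_n(G)$ is extendable by vacuity, and so extends by Proposition \ref{extextends}.
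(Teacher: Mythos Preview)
Your proposal is correct and matches the paper's approach exactly: the corollary is stated without proof in the paper precisely because it follows immediately from Proposition \ref{extextends} by the vacuous observation you give (no free vertices $\Rightarrow$ no free edges $\Rightarrow$ the obstruction in Definition \ref{extendable} cannot arise).
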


\section{Sharpness of $n \geq 2 - \chi(G)$}

In this section we use the methods of Section \ref{combmod} to show that we cannot make any stronger claims about the existence or nonexistence of sections based only on the homotopy type of a graph, that is, whenever $n < 2 - \chi(G)$ and $\chi(G) < 0$, it is impossible to determine whether $G$ admits an identifying function. First, we prove an existence result.

\begin{prop}
Suppose $G$ contains a vertex $w$, such that $G \sm w$ contains $k$ components which are connected to $w$ at least twice. Then, if $n \leq k$, $G$ admits an identifying function.
\end{prop}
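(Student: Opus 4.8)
The plan is to construct the identifying function explicitly using the combinatorial model $K_n(G)$ from Section \ref{combmod}, taking advantage of the hypothesis that $n \leq k$. The key geometric idea is this: we are given a vertex $w$ and $k$ distinct components $C_1, \ldots, C_k$ of $G \sm w$, each of which meets $w$ along at least two edges (equivalently, each $C_i \cup \{w\}$ contains an embedded circle through $w$). Since $n \leq k$, for \emph{any} configuration $x = (x_1, \ldots, x_n)$, the $n$ tokens can occupy at most $n \leq k$ of these components at once, so by the pigeonhole principle there is always at least one component $C_i$ that is token-free or at least one ``slot'' we can reliably use. The aim is to leverage the double-connection of each $C_i$ to $w$ to produce a canonical token-free spot near $w$.

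First I would define a partial identifying function on the vertices of $K_n(G)$ — i.e.\ on configurations in which the tokens are evenly spaced on each edge, as in Theorem \ref{knembedding}\ref{evenlyspaced}. For such a vertex $F$, I would use the combinatorial data $F(\bar e)$, $F(v)$ to determine which of the $k$ components are ``occupied.'' Because each $C_i$ connects to $w$ via at least two edges, there is a small loop based at $w$ inside $C_i \cup \{w\}$; the strategy is to place the new token on whichever such loop is least obstructed, chosen by a combinatorial rule (for instance, the lowest-indexed component satisfying some occupancy condition, together with a canonical choice of point on its two connecting edges near $w$). The pigeonhole bound $n \leq k$ guarantees this rule always has a valid output. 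Then I would invoke Proposition \ref{extextends}: I must check the resulting partial function on $K_n(G)$ is \emph{extendable} in the sense of Definition \ref{extendable}, i.e.\ that it never forces the placement of the new token onto a free edge when the adjacent branched vertex is occupied. Since my construction only ever places the token on loops interior to the doubly-connected components (which are not free edges), the extendability condition should hold essentially by design.

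The main obstacle will be \textbf{continuity}, or rather, arranging the combinatorial rule so that it descends to a genuinely continuous (and token-avoiding) partial function, not merely a vertex-by-vertex assignment. On a single closed face of $K_n(G)$ several tokens slide simultaneously between vertices and edge-interiors, so I must ensure the chosen target component and target point vary continuously as the configuration moves across the face — in particular that the ``least obstructed component'' does not jump discontinuously when a token crosses into or out of one of the $C_i$. The cleanest way to handle this is to make the choice depend only on data that is locally constant on each face (which component of $G \sm \{$tokens$\}$ each relevant point lies in), and to place the token on a loop whose base point $w$-neighborhood is never entered by any token, using the double connectivity to guarantee a persistent escape route. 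Once the partial function is continuous and token-avoiding on $K_n(G)$ and shown extendable, Proposition \ref{extextends} delivers the full identifying function on $\Conf_n(G)$, completing the proof.
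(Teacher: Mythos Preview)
Your high-level architecture matches the paper's: build a partial identifying function on $K_n(G)$ whose image lies in the cycles through $w$ contributed by the doubly-connected components, verify extendability (automatic, since those cycles avoid free edges), and invoke Proposition~\ref{extextends}. The pigeonhole reasoning and the extendability check are both fine.

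The genuine gap is in your treatment of continuity. You propose to make the choice of ``least obstructed component'' depend only on data that is locally constant on each face of $K_n(G)$, but the occupancy data of the $C_i$ is \emph{not} locally constant: along an edge of $K_n(G)$ a token slides between the branched vertex $w$ and the interior of an adjacent edge, so it enters or leaves a component $C_i$. Any rule of the form ``pick the lowest-indexed empty $C_i$'' will jump at exactly those moments. Your final sentence (``a loop whose base point $w$-neighborhood is never entered by any token'') cannot be arranged either, since configurations in $K_n(G)$ certainly do place tokens at or near $w$.

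The paper resolves this with a mechanism you have not identified. It uses Theorem~\ref{knembedding}\ref{bdddist}: tokens in $K_n(G)$ are uniformly $\ep$-separated, so at most \emph{one} token lies in the $\ep$-neighborhood $N$ of $w$. The identifying function is then defined in two regimes. When no token is in $N$, set $f(x)=w$. When exactly one token $x_i$ is in $N$, the other $n-1$ tokens occupy at most $n-1<k$ of the cycles, so a cycle is empty; on that cycle choose a fixed-point-free map $\lambda:\bar N\to S^1$ with $\lambda(\partial N)=w$ and set $f(x)=\lambda(x_i)$. The boundary condition $\lambda(\partial N)=w$ makes the two regimes glue continuously, and the choice of empty cycle is made via a function $h$ on the \emph{discrete} data of how many of the $n-1$ far-away tokens sit in each cycle --- data that genuinely is locally constant once the near-$w$ token has been separated out. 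The paper packages all of this by first retracting $G$ onto an embedded wedge of $k$ circles $H$ and building a $\Si_n$-invariant identifying function on the relevant subset $M\subset H^n$, then pulling back along $K_n(G)\to M$. Without isolating the single near-$w$ token via the bounded-distance property, your combinatorial rule has no way to vary continuously across the faces where a token crosses into or out of $N$.
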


\begin{proof}

First, we consider the graph $H = \bigvee_{1}^k S^1_k$, a wedge of $k$ circles, where $k \geq n$. Let $v$ be the unique branched vertex, and let $p_1, \ldots, p_k$ be the points such that $p_i$ is the point of $S_i^1$ antipodal to $v$. Let $S$ be the set $\{1, 2, \ldots, k\}$, and let $\F$ be the class of functions $\psi: S \ra \NN$ such that
\[\sum_{s \in S} \psi(s) = n - 1.\]
Let $h: \F \ra S$ be a function such that $\psi((h(\psi))) = 0$ for all $\psi \in \F$. Since $k \geq n$, such a function always exists.

Select some small $\ep > 0$, and let $M \sbs H^n$ be the subset:
\[M = \{(x_1, \ldots, x_n): x_i \in H, \text{ there is at most one $i$ such that } d(x_i, v) < \ep \}.\]
(note that we do not require the $x_i$ to be distinct). We will define a $\Si_n$-invariant identifying function on $M$, that is, a function $f: M \ra H$ such that $f(x_1, \ldots, x_n) \neq x_i$ for all $x = (x_1, \ldots, x_n) \in M$, and $i = 1, \ldots, n$ such that $f(\si x) = f(x)$, where $\Si_n$ acts on $M$ by permuting indices. Let $N \sbs H$ be the $\ep$-neighborhood of $v$, and let $A = (G \sm N)^n$ and $B = N \ti (G \sm N)^{n-1}$. Note that $A$ and $B$ are disjoint, and every element of $M$ is in the $\Si_n$-orbit of an element in $A$ or $B$ (but not both), so it suffices to define $f$ on $A \cup B$. On $A$, we simply define $f$ to be constantly equal to $v$. On $B$, we will define $f$ separately on each component $X$ of $B$. For each $X$, we can define a function $\psi_X: S \ra \NN$ by setting $\psi_X(i)$ to be the number of tokens of an element of $X$ contained in $S_i^1 \sm N$. Clearly, $\psi_X \in \F$. Let $\pi: B \ra N$ be the projection of $B$ onto $N$. We can define a map $\lambda: \bar{N} \ra S^1_{h(\psi_X)}$ such that:
\begin{enumerate}[(a)]
    \item $\lambda(v) = p_{h(\psi_X)}$
    \item For each free vertex $u$ of $\bar{N}$, $\lambda(u) = v$
    \item $\lambda$ has no fixed points, that is, $\lambda(q) \neq q$ for all $q \in N \cap S^1_{h(\psi_X)}$.
\end{enumerate}
By the definition of $h$, $\psi_X(h(\psi_X)) = 0$, so $S_{h(\psi_X)}$ contains no tokens of any element of $X$ by the definition of $\psi_X$. Therefore, such a map exists. We define $f = \lambda \circ \pi$ on $X$. This is an identifying function on $X$, and by applying this process to all components of $B$, we get an identifying function on $B$. Further, by taking $f$ to be $\Si_n$ invariant, we get an identifying function on $M$, as desired, by noting that the local definitions of $f$ glue continuously.

Now, we turn to the task of defining an identifying function on $G$. By our assumptions on $w$, there exists an embedding $i: H \hra G$ such that $i(v) = w$, and a retract $r: G \ra H$ such that $r^{-1}(w) = \{v\}$. Note that $r$ induces a map $R: K_n(G) \ra H^n$, by applying $r$ to each token of a configuration. By Theorem \ref{knembedding}\ref{bdddist}, there is some $\ep$ such that at most one token of any configuration of $K_n(G)$ is within $\ep$ of $w$. Taking sufficiently small $\ep$, we can see that this implies that $R(K_n(G)) \sbs M$. Then, let $g = i \circ f \circ R$, where $f$ is as constructed above. For any $x \in K_n(G)$, note that none of the tokens of $x$ lie in the image of the circle which contains $g(x)$, so $g$ is a partially defined identifying function. Further, the image of $H$ in $G$ intersects no free edges, so $g$ is extendable, and by Proposition \ref{extextends}, we conclude.

\end{proof}

Now we present a class of examples with $n < 2 - \chi(G)$ where sections do not exist.

\begin{example}[Wedge of balloons]
Let $B$ be the balloon graph, as pictured in Figure \ref{fig:balloon}

\begin{figure}[h]
    \centering
    \includegraphics[scale = .7]{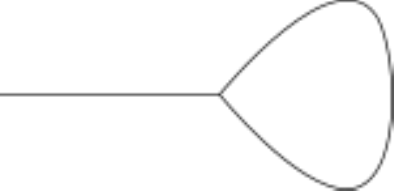}
    \caption{}
    \label{fig:balloon}
\end{figure}

Suppose $G = \bigvee_1^k B$ is a wedge of balloons, glued at the free vertices of each balloon, and suppose that $n \leq k$. If $n \geq 2$, then statement (A) holds, and if $n \geq 3$, statement (B) holds, and the proofs of these are quite similar to the proof of Theorem \ref{mainthm}. The key observation is that whenever we have a configuration $x$ with $f(x)$ in a copy of $B$ and a token (say $x_1$) as depicted in Figure \ref{fig:initballoon}, there always exists a distinguished pair $(1, j)$ in the circle in that copy of $B$ (in fact for any $j \neq 1$, possibly after changing orientation).

$\,$

\begin{figure}[h]
    \labellist
    \small\hair 2pt
    \pinlabel $x_1$ at 50 40
    \pinlabel $f(x)$ at 95 70
    \endlabellist
    \centering
    \includegraphics[scale = .7]{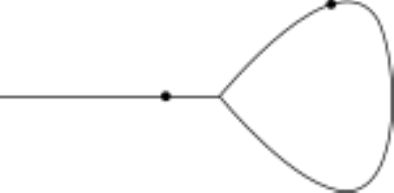}
    \caption{}
    \label{fig:initballoon}
\end{figure}

To see why, note that we can slide any token $x_j$ into $B$, and continue through the path depicted by Figure \ref{fig:balloonmove}.

\begin{figure}[h]
    \labellist
    \small\hair 2pt
    \pinlabel $x_1$ at 50 185
    \pinlabel $f(x)$ at 95 210
    \pinlabel $x_j$ at 20 110
    \pinlabel $x_1$ at 60 110
    \pinlabel $f(x)$ at 120 135
    \pinlabel $x_j$ at 60 40
    \pinlabel $f(x)$ at 125 65
    \pinlabel $x_1$ at 130 30
    \endlabellist
    \centering
    \includegraphics[scale = .7]{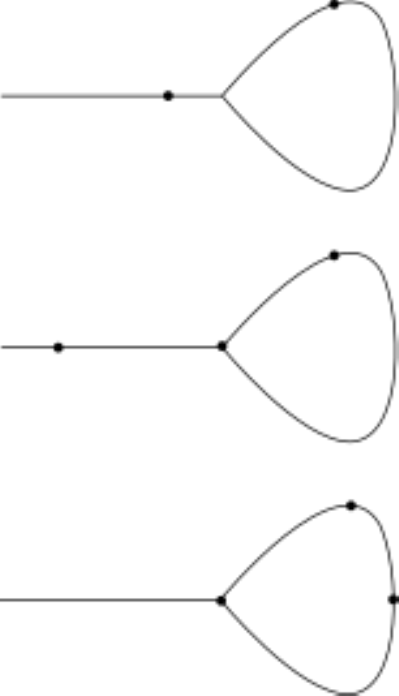}
    \caption{}
    \label{fig:balloonmove}
\end{figure}

To prove (A) and (B), note that we can avoid placing tokens in any balloons we would like, and copy the proof of Theorem \ref{mainthm}, and the fact above when necessary.
\end{example}

Therefore we have shown that when $n < 2 - \chi(G)$ (and $n \geq 4$), we cannot say for sure whether or not $G$ admits a section.

\bibliographystyle{alpha}
\bibliography{sectprobs}
\end{document}